\colorlet{BLUE}{blue}
\colorlet{RED}{red}
\colorlet{GRAY}{gray}
\colorlet{BROWN}{brown}
\definecolor{OliveGreen}{rgb}{0,0.6,0}
\numberwithin{equation}{section}
\newtheorem{thm}{Theorem}[section]
\newtheorem{lmm}[thm]{Lemma}
\newtheorem{crl}[thm]{Corollary}
\newtheorem{prp}[thm]{Proposition}
\theoremstyle{definition}
\newtheorem{dfn}[thm]{Definition}
\newtheorem{rmk}[thm]{Remark}
\newtheorem{note}[thm]{Note}
\newtheorem{question}[thm]{Question}
\newtheorem{claim}{Claim}
\newtheorem{notation}{Notation}
\theoremstyle{definition}
\newtheorem*{thm*}{Theorem}
\newtheorem*{lmm*}{Lemma}
\newtheorem*{crl*}{Corollary}
\newtheorem*{MRI*}{Result 1}
\newtheorem*{MRII*}{Result 2}
\newtheorem*{PRT*}{Path representation theorem}
\newtheorem*{claim*}{Claim}
\DeclareMathOperator{\dist}{dist}
\DeclareMathOperator{\diam}{diam}
\title[On Quasiconvexity of Precompact-Subset Spaces]{On Quasiconvexity of Precompact-Subset Spaces}
\author{Earnest Akofor}
\address{\textnormal{Department of Mathematics and Computer Science, 
         Faculty of Science, University of Bamenda, 
         PO Box 39 Bambili, NW Region, Cameroon}}
\email{eakofor@gmail.com}
\subjclass[2020]{Primary 52A01; Secondary 54B20 54E05 40A30 03E75}
\keywords{Metric space, subset space, stable covering subspace, quasiconvex, quasigeodesic, Lipschitz path.}
\begin{document}

\begingroup
\def\uppercasenonmath#1{} 
\let\MakeUppercase\relax 
\maketitle

\begin{abstract}    
\vspace{0.2cm}

\noindent Let $X$ be a metric space and $BCl(X)$ the collection of nonempty bounded closed subsets of $X$ as a metric space with respect to Hausdorff distance. We study both characterization and representation of Lipschitz paths in $BCl(X)$ in terms of Lipschitz paths in $X$ and in the completion of $X$. We show that a full characterization and representation is possible in any subspace $\mathcal{J}\subset BCl(X)$ that (i) consists of precompact subsets of $X$, (ii) contains the singletons $\{x\}$ for every $x\in X$, and (iii) satisfies $BCl(C)\subset\mathcal{J}$ for every $C\in\mathcal{J}$. When $X$ is geodesic, we investigate quasiconvexity of $\mathcal{J}$ for some instances of $\mathcal{J}$, especially when $\mathcal{J}$ consists of finite subsets of $X$.
\end{abstract}
\let\bforigdefault\bfdefault
\addtocontents{toc}{\let\string\bfdefault\string\mddefault}
\tableofcontents
\section{\textnormal{\bf Introduction}}\label{Intro}

\noindent If $X$ is a (topological) space and $A\subset X$ a subset, then the \textbf{closure} of $A$ in $X$ is denoted by $cl_X(A)$, and, a continuous map $\gamma:[0,1]\rightarrow X$ (respectively, $\gamma:D\subset[0,1]\rightarrow X$ with $0,1\in D$) is called a \textbf{path} (respectively, a \textbf{partial path}) in $X$ from $\gamma(0)$ to $\gamma(1)$, or \textbf{connecting} $\gamma(0)$ to $\gamma(1)$. If $\gamma$ is a path in a metric space, then its \textbf{length} (Definition \ref{PathLenDfn}) is denoted by $\ell(\gamma)$. If $X$ and $Y$ are metric spaces and $L\geq 0$, then a map $f:X\rightarrow Y$ is \textbf{$L$-Lipschitz} if $d(f(x),f(x'))\leq Ld(x,x')$, for all $x,x'\in X$. Let $X$ be a metric space and $\lambda\geq 1$. Given a path $\gamma:[0,1]\rightarrow X$, we say $\gamma$ is \textbf{rectifiable} if $\ell(\gamma)<\infty$, and we say $\gamma$ is a \textbf{$\lambda$-quasigeodesic} (or a \textbf{$\lambda$-quasiconvex path}) if $\gamma$ is $\lambda d(\gamma(0),\gamma(1))$-Lipschitz (or equivalently, if $\ell(\gamma)\leq\lambda d(\gamma(0),\gamma(1))$ by Corollary \ref{GeodLength}), where a $1$-quasigeodesic is called a \textbf{geodesic}. Accordingly, $X$ is a \textbf{$\lambda$-quasigeodesic space} (or a \textbf{$\lambda$-quasiconvex space}) if every two points of $X$ are connected by a $\lambda$-quasigeodesic in $X$. A $1$-quasiconvex space is naturally called a \textbf{geodesic space}.

Throughout the rest of the introduction, unless said otherwise, let $X=(X,d)$ be a metric space, with its \textbf{completion} denoted by $\widetilde{X}$ and the \textbf{closure} $cl_{\widetilde{X}}(A)$ in $\widetilde{X}$ of any set $A\subset X$ denoted by $\widetilde{A}$. As metric spaces with respect to Hausdorff distance $d_H$ (see equation (\ref{HaudDist})), consider the collection $BCl(X)$ of nonempty bounded closed subsets of $X$, the collection $PCl(X)$ of totally bounded (or ``precompact'') members of $BCl(X)$, the collection $K(X)$ of compact members of $PCl(X)$, the collection $FS(X)$ of finite members of $K(X)$, and the collection $FS_n(X)$ of members of $FS(X)$ with cardinality at most $n$. A subspace $\mathcal{J}\subset BCl(X)$ is a \textbf{stable covering subspace} if it is both \textbf{covering} in the sense that $\{x\}\in\mathcal{J}$, for all $x\in X$, and \textbf{stable} in the sense that $BCl(C)\subset\mathcal{J}$, for all $C\in\mathcal{J}$. We will refer to quasigeodesics in $BCl(X)$ as \textbf{Hausdorff quasigeodesics} (i.e., quasigeodesics in $BCl(X)$ with respect to $d_H$). By \textbf{representation} of a path $\gamma:[0,1]\rightarrow\mathcal{J}\subset BCl(X)$ in terms of paths $\{\gamma_r:[0,1]\rightarrow \widetilde{X}\}_{r\in \Gamma}$ we mean a pointwise expression of the form $\gamma(t)=cl_{\widetilde{X}}\{\gamma_r(t):r\in \Gamma\}\cap X$, for all $t\in[0,1]$.

A detailed study of quasiconvexity properties of a subspace $\mathcal{J}\subset BCl(X)$ is naturally expected to involve a search for general characterizations and representations of Hausdorff quasigeodesics (in $\mathcal{J}$ between any two sets $A,B\in\mathcal{J}$) in terms of Lipschitz paths in $X$. Moreover, it is easier to picture a path in $BCl(X)$ in terms of (paths in $\widetilde{X}$ viewed as partial/approximate) paths in $X$. We will now highlight our main results.

\begin{PRT*}[\textcolor{blue}{Theorem \ref{RepThmPCl}}]
Let $L\geq 0$, $\mathcal{J}\subset PCl(X)$ a stable covering subspace, and $A,B\in \mathcal{J}$. There exists an $L$-Lipschitz path $\gamma:[0,1]\rightarrow\mathcal{J}$ from $A$ to $B$ if and only if there exists a family $\{\gamma_r:[0,1]\rightarrow\widetilde{X}\}_{r\in \Gamma}$ of $L$-Lipschitz paths in $\widetilde{X}$, where $\gamma_r$ is a path from $\gamma_r(0)\in \widetilde{A}$ to $\gamma_r(1)\in \widetilde{B}$, such that (i) $R:=\{(\gamma_r(0),\gamma_r(1)):r\in\Gamma\}\subset \widetilde{A}\times \widetilde{B}$ is a densely complete relation (Definition \ref{DenseCompl}) and
(ii) $~cl_{\widetilde{X}}\{\gamma_r(t):r\in \Gamma\}\cap X\in \mathcal{J}$, for all $t\in[0,1]$. Moreover:
\begin{itemize}[leftmargin=0.7cm]
\item The family of paths can be chosen such that (iii) $~\gamma(t)=cl_{\widetilde{X}}\{\gamma_r(t):r\in \Gamma\}\cap X$, for all $t\in[0,1]$.
\item When $\mathcal{J}\subset K(X)\subset PCl(X)$, we can replace $\widetilde{X}$ with $X$ and choose the collection $\{\gamma_r:[0,1]\rightarrow X\}_{r\in\Gamma}$ in the representation (iii) to be maximal.
\end{itemize}
Especially, if $\gamma$ be a $\lambda$-quasigeodesic (resp., rectifiable path), set $L:=\lambda d_H(A,B)$ (resp., $L:=\ell(\gamma)$).
\end{PRT*}

This result, which is an existence criterion for paths in $PCl(X)$, becomes an existence criterion for paths in $BCl(X)$ if the metric space $X$ is compact or proper. For a compact metric space $Z$, the work of M\'emoli and Wan in \cite[Theorem 3]{MemoliWan2023} gives a characterization of geodesics in $BCl(Z)=K(Z)$. Our result in Theorem \ref{RepThmPCl} therefore generalizes this characterization in $K(X)$ to a characterization in $PCl(X)$. For the representation aspect of Theorem \ref{RepThmPCl}, similar results are given by the \emph{Castaing representation} of Lipschitz compact-valued multifunctions (see \cite[Th\'eor\`eme 5.4]{castaing1967} and \cite[Theorem 7.1]{Chist2004}).

When $X$ is geodesic or endowed with more structures, the abundance of geodesics in $X$ enables a straightforward construction of quasigeodesics in many subspaces of $BCl(X)$, as it has been done by Kovalev and Tyson in \cite[Theorem 2.1 and Corollary 2.2]{kovalevTyson2007}, by M\'emoli and Wan in \cite[Theorem 3.6]{MemoliWan2023}, and by Fox in \cite[Theorem 3.4]{fox2022} when $X$ is a normed space. In particular, the proof of \cite[Theorem 2.1]{kovalevTyson2007} shows that Lipschitz paths in $X$ are sufficient for quasigeodesics in $BCl(X)$. Our description of Lipschitz paths through Lemma \ref{QGeodSuff} and Theorem \ref{RepThmPCl} goes further to show that Lipschitz paths in $\widetilde{X}$ (or partial paths in $X$) are necessary for Lipschitz paths in $PCl(X)$. The question of whether or not Lipschitz paths in $\widetilde{X}$ are also necessary for Lipschitz paths in $BCl(X)$ is open and posed as Question \ref{CritQuest}.

A related result (for $\lambda>1$ instead of $\lambda\geq 1$) in Proposition \ref{QGeodEquiv2} seems to indicate that in Theorem \ref{RepThmPCl} it might be possible to replace $PCl(X)$ with a larger subspace of $BCl(X)$, especially when $X$ is geodesic. Our uncertainty here is expressed as Question \ref{RepsQuest}.

\begin{enumerate}[leftmargin=0.0cm]
\item[]\vspace{-0.3cm}
\item[] \textbf{Question 1:} Let $\lambda\geq 1$, $\mathcal{J}\subset PCl(X)$ a stable covering subspace, and $A,B\in\mathcal{J}$ (which are hypotheses of Theorem \ref{RepThmPCl}). If $X$ is geodesic, does it follow that $\mathcal{J}$ is $\lambda$-quasiconvex?
\item[]\vspace{-0.3cm}
\end{enumerate}

\noindent Question 1 has already been answered for some instances of $\mathcal{J}$, which we consider important enough to be reviewed from a new perspective. Borovikova, Ibragimov, and Yousefi showed in \cite[Theorem 4.1]{BorovEtal2010} that $FS_n(\mathbb{R})$ is $4^n$-quasiconvex. Based on our related earlier work in \cite{akofor2020,akofor2019}, we show that if $X$ is geodesic, then the following are true:
\begin{enumerate}[leftmargin=0.8cm]
\item[(i)] $FS_2(X)$ and $FS(X)$ are geodesic (Corollaries \ref{FSGeodBound} and \ref{FS_BCl_QConv}).
\item[(ii)] $FS_n(X)$ is not geodesic for $n\geq3$ (Corollary \ref{GeodFailCrl}).
\item[(iii)] $FS_n(X)$ is $2$-quasiconvex (Theorem \ref{QConvFSn}), improving the above $4^n$-quasiconvexity of $FS_n(\mathbb{R})$.
\item[(iv)] For $n\geq 3$, the integer $2$ is the smallest quasiconvexity constant for $FS_n(X)$ (Theorem \ref{QConvFSn}).
\end{enumerate}
\noindent If $X$ is geodesic, the results (ii)-(iii) above say that, for $n\geq 3$, $FS_n(X)$ is $2$-quasiconvex but not geodesic. Similarly, if $X$ is geodesic, then $BCl(X)$ is $\lambda$-quasiconvex for $\lambda>1$ (Corollary \ref{FS_BCl_QConv}) but need not be geodesic, as noted in \cite{kovalevTyson2007} following the work of Bryant in \cite{bryant1970}.

The rest of the paper is organised as follows. In Section \ref{MSpGeodesics}, we formalize notation and review rectifiable paths and quasigeodesics in metric spaces. In Section \ref{SSpGeodesics}, we present our main results on quasiconvexity of precompact-subset spaces. This is followed in Section \ref{FSSpGeodesics} by a concise review of our earlier work on quasiconvexity of finite-subset spaces. In Section \ref{QGeodQuests}, we ask a few questions on extendability of our characterization and representation criterion/criteria for Lipschitz paths (e.g., Hausdorff quasigeodesics) in subset spaces and on efficiency of such paths.

\section{\textnormal{\bf Notation and review of rectifiable paths in metric spaces}}\label{MSpGeodesics}

We begin this section with some conventions and definitions that will be used throughout this paper. The abbreviation ``resp.'' stands for ``respectively'', and used to display two or more separate statements in a simultaneous or parallel manner whenever it seems convenient to do so. Given a set $S$, its \textbf{cardinality} and \textbf{powerset} are denoted by $|S|$ and $\mathcal{P}(S)$ respectively.

Throughout the rest of this section, unless said otherwise, let $X=(X,d)$ be a metric space. Consider subsets $A,B\subset X$ and a point $x\in X$. The \textbf{diameter} of $A$ is $\diam(A):=\sup_{a,a'\in A}d(a,a')$. The \textbf{distance} between $x$ and $A$ is
\[
\textstyle\dist(x,A)=\dist^X(x,A):=\inf_{a\in A}d(x,a),
\]
and, between $A$ and $B$ is $\dist(A,B):=\inf_{a\in A}\dist(a,B)$. For $R>0$, the \textbf{open $R$-neighborhood} of $A$ (resp., of $x$) is
\[
N_R(A)=N_R^X(A):=\{x\in X:\dist(x,A)<R\}~~~~\big(\textrm{resp.,}~N_R(x)=N_R^X(x):=N_R^X(\{x\})\big),
\]
and the \textbf{closed $R$-neighborhood} of $A$ (resp., of $x$) is
\[
\overline{N}_R(A)=\overline{N}_R^X(A):=\{x\in X:\dist(x,A)\leq R\}~~~~\big(\textrm{resp.,}~\overline{N}_R(x)=\overline{N}_R^X(x):=\overline{N}_R^X(\{x\})\big).
\]
The \textbf{Hausdorff distance} between $A$ and $B$ is
\begin{align}
\label{HaudDist}\textstyle d_H(A,B):=\max\left\{\sup\limits_{a\in A}\dist(a,B),\sup\limits_{b\in B}\dist(b,A)\right\}=\inf\{r:A\cup B\subset \overline{N}_r(A)\cap \overline{N}_r(B)\}.
\end{align}
A set $A\subset X$ is \textbf{bounded} if there exist $x\in X$ and $R>0$ such that $A\subset N_R(x)$.

Let $Cl(X)$ denote the collection of nonempty closed subsets of $X$. In this paper, a \textbf{subset space} of $X$ is any subcollection $\mathcal{J}\subset Cl(X)$ on which $d_H$ takes finite values, making $\mathcal{J}=(\mathcal{J},d_H)$ a metric space. Subset spaces that will be relevant to us include the following:
\begin{enumerate}[leftmargin=1cm]
\item\label{SspItem1} \textbf{$C$-regulated subset space} of $X$ (for any $C\in Cl(X)$),~ $~\mathcal{H}(X;C):=\{A\in Cl(X):d_H(A,C)<\infty\}$, as introduced by Kovalev and Tyson in \cite{kovalevTyson2007}. If $C\not\in BCl(X)$, then $\mathcal{H}(X;C)$ has neither (nonempty) stable nor covering subspaces (as $\{x\}\not\in \mathcal{H}(X;C)$ for all $x\in X$).
\item\label{SspItem2} \textbf{Bounded-subset space} of $X$,~ $~BCl(X):=\{A\in Cl(X):A~\textrm{is bounded}\}$, where $\mathcal{H}(X;C)\cap BCl(X)\neq\emptyset$ if and only if $C\in BCl(X)$, if and only if $\mathcal{H}(X;C)=BCl(X)$.
\item\label{SspItem3} \textbf{Precompact-subset space} of $X$,~ $~PCl(X):=\{A\in BCl(X):A~\textrm{is precompact}\}$, where $A\subset X$ is \textbf{totally bounded} (or ``\textbf{precompact}'', see Notation \ref{TbPcTerm}) if for every $\varepsilon>0$ there exists a finite set $F\subset A$ such that $A\subset N_\varepsilon(F)$.
\item \textbf{Compact-subset space} of $X$,~ $~K(X):=\{A\in PCl(X):A~\textrm{is compact}\}$, where $A\subset X$ is \textbf{compact} if every open cover of $A$ has a finite subcover.
\item \textbf{Finite-subset space} of $X$,~ $~FS(X):=\{A\in K(X):|A|<\infty\}$.
\item \textbf{$n$th Finite-subset space} (or \textbf{$n$th symmetric product}) of $X$,~ $~FS_n(X):=\{A\in FS(X):|A|\leq n\}$, for any integer $n\geq 1$.
\end{enumerate}

We therefore have the following order of containment.
\[
FS_n(X)\subset FS(X)\subset K(X)\subset PCl(X)\subset BCl(X)\subset Cl(X).
\]

A \textbf{complete} metric space is one in which every Cauchy sequence converges. The \textbf{completion} of $X$ (i.e., the complete metric space containing $X$ as a dense subspace) will be denoted by $\widetilde{X}$, and if $A\subset X$, then $\widetilde{A}\subset\widetilde{X}$ will denote the closure $cl_{\widetilde{X}}(A)$ of $A$ in $\widetilde{X}$. A metric space is \textbf{proper} if every bounded closed subset of the space is compact.

\begin{notation}\label{TbPcTerm}
If $\mathcal{J}\subset Cl(X)$, then the expression $X\subset\mathcal{J}$ means $\{x\}\in\mathcal{J}$ for all $x\in X$.

In this paper, although ``precompact'' means ``totally bounded'' by item (\ref{SspItem3}) above, in the mathematics literature however, a set in a \emph{space} is often called \emph{precompact} if it has a \emph{compact closure}. Meanwhile, a \emph{metric space} is \emph{totally bounded} if and only if it has a \emph{compact completion} (because it is \emph{compact} if and only if it is \emph{complete} and \emph{totally bounded}). From these basic observations, it is apparent that for metric spaces, \emph{completion} can be viewed as a strengthened/extended version of \emph{closure}, which is our main reason (apart from notational convenience) for loosely referring to ``totally bounded'' as ``precompact''.
\end{notation}

\begin{dfn}[\textcolor{blue}{Path, Parametrization, Length, Rectifiable, Constant speed, Natural parametrization}]\label{PathLenDfn}
Let $X$ be a space and $[a,b]\subset\mathbb{R}$ a compact interval (where we will mostly assume $[a,b]=[0,1]$ for simplicity). A continuous map $\gamma:[a,b]\rightarrow X$ is called a \textbf{path} in $X$ from $\gamma(a)$ to $\gamma(b)$, or \textbf{connecting} $\gamma(a)$ to $\gamma(b)$. Given paths $\gamma,\eta:[a,b]\rightarrow X$, $\eta$ is a \textbf{parametrization} of $\gamma$, written $\eta\sim\gamma$, if $\eta(a)=\gamma(a)$, $\eta(b)=\gamma(b)$, and $\eta([a,b])=\gamma([a,b])$.

Let $X$ be a metric space and $\gamma:[a,b]\rightarrow X$ a path. The \textbf{length} of $\gamma$ is
\begin{equation*}
\ell(\gamma)~:=~\sup\big\{\ell_P(\gamma):P\subset [a,b]~~\textrm{a finite partition}\big\},
\end{equation*}
where $\ell_P(\gamma):=\sum_{i=1}^kd(\gamma(t_{i-1}),\gamma(t_i))$ is the length of $\gamma$ over $P=\{a=t_0<t_1<\cdots<t_k=b\}$. If $\ell(\gamma)<\infty$, we say $\gamma$ is \textbf{rectifiable}. A path $\gamma:[a,b]\rightarrow X$ has \textbf{constant speed} $c\geq 0$ if $\ell(\gamma|_{[t,t']})=c|t-t'|$, for all $t,t'\in[a,b]$. A path with constant speed $c=1$ is called a \textbf{natural parametrization}.
\end{dfn}

In Definition \ref{PathLenDfn}, $\ell(\gamma)$ does not depend on the way $\gamma$ is parameterized, i.e., if two paths $\gamma,\eta:[a,b]\rightarrow X$ satisfy $\gamma\sim\eta$, then $\ell(\gamma)=\ell(\eta)$. The converse is false, i.e., for arbitrary paths $\gamma,\eta:[a,b]\rightarrow X$,  $\ell(\gamma)=\ell(\eta)$ does not imply $\eta\sim\gamma$. Also, \emph{path length is additive} in the sense that, given a rectifiable path $\gamma:[a,b]\rightarrow X$ and $a\leq c\leq b$, we have
\[
\ell(\gamma)=\ell(\gamma|_{[a,c]})+\ell(\gamma|_{[c,b]}).
\]

\begin{lmm}[\textcolor{blue}{Burago and Ivanov: \cite[Proposition 2.5.9]{BBI}}]\label{ConstSpeedPar}
Let $\gamma:[0,1]\rightarrow X$ be a rectifiable path and $l:=\ell(\gamma)$. There exists a nondecreasing continuous map $\varphi:[0,1]\rightarrow [0,l]$ and a natural parametrization $\overline{\gamma}:[0,l]\rightarrow X$ such that $\gamma=\overline{\gamma}\circ\varphi:[0,1]\stackrel{\varphi}{\longrightarrow}[0,l]\stackrel{\overline{\gamma}}{\longrightarrow}X$. 
\end{lmm}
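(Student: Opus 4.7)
The plan is to construct the arc-length reparametrization directly. Define $\varphi:[0,1]\to[0,l]$ by $\varphi(t):=\ell(\gamma|_{[0,t]})$, with $\varphi(0)=0$ and $\varphi(1)=l$. Additivity of path length (stated right before the lemma) immediately gives $\varphi(t)-\varphi(s)=\ell(\gamma|_{[s,t]})\geq 0$ for $s<t$, so $\varphi$ is nondecreasing. The two nontrivial tasks are then (a) proving $\varphi$ is continuous and (b) producing $\overline{\gamma}$ on the image.

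For continuity of $\varphi$, the argument I would use is the standard partition-refinement estimate: given $\varepsilon>0$, choose a finite partition $P=\{0=t_0<\cdots<t_k=1\}$ with $\ell_P(\gamma)>l-\varepsilon$. Using additivity of length and the fact that $\ell(\gamma|_{[t_{i-1},t_i]})\geq d(\gamma(t_{i-1}),\gamma(t_i))$, one shows that $\sum_i\bigl(\ell(\gamma|_{[t_{i-1},t_i]})-d(\gamma(t_{i-1}),\gamma(t_i))\bigr)<\varepsilon$. Combining this with uniform continuity of $\gamma$ on $[0,1]$, any sufficiently short subinterval contributes at most $2\varepsilon$ to $\ell(\gamma)$, which yields $|\varphi(t)-\varphi(s)|<2\varepsilon$ once $|t-s|$ is small. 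This is the technical heart of the lemma and the step I expect to be the main obstacle.

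Next, I would define $\overline{\gamma}:[0,l]\to X$ by $\overline{\gamma}(s):=\gamma(t)$ for any $t$ with $\varphi(t)=s$. Well-definedness uses the observation that $\varphi(t_1)=\varphi(t_2)$ with $t_1<t_2$ forces $\ell(\gamma|_{[t_1,t_2]})=0$, which in turn forces $\gamma$ to be constant on $[t_1,t_2]$ (any two distinct values in $\gamma([t_1,t_2])$ would appear as consecutive nodes of some partition and give a positive length lower bound). Thus $\overline{\gamma}$ is well-defined on the image of $\varphi$, and by construction $\gamma=\overline{\gamma}\circ\varphi$; since $\varphi$ is continuous and nondecreasing with $\varphi(0)=0$, $\varphi(1)=l$, its image is all of $[0,l]$. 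Continuity of $\overline{\gamma}$ follows by a subsequence argument: for $s_n\to s$ in $[0,l]$, pick $t_n\in\varphi^{-1}(s_n)\subset[0,1]$; any convergent subsequence $t_{n_k}\to t_*$ satisfies $\varphi(t_*)=s$ by continuity of $\varphi$, hence $\gamma(t_{n_k})=\overline{\gamma}(s_{n_k})\to\gamma(t_*)=\overline{\gamma}(s)$, and all subsequential limits agree.

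Finally, for the constant-speed property $\ell(\overline{\gamma}|_{[s,s']})=|s'-s|$, pick $t,t'\in[0,1]$ with $\varphi(t)=s$, $\varphi(t')=s'$, say $t<t'$. Then $\overline{\gamma}|_{[s,s']}$ is a reparametrization of $\gamma|_{[t,t']}$ via $\varphi|_{[t,t']}$, so by parametrization-invariance of length (stated right after Definition \ref{PathLenDfn}) we get $\ell(\overline{\gamma}|_{[s,s']})=\ell(\gamma|_{[t,t']})=\varphi(t')-\varphi(t)=s'-s$. Once continuity of $\varphi$ is established, every remaining step is essentially formal manipulation of the definitions of length and reparametrization.
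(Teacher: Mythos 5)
Your construction is correct and is exactly the standard arc-length reparametrization argument; the paper itself gives no proof of this lemma (it is quoted from Burago--Burago--Ivanov, Proposition 2.5.9), and your sketch reproduces the proof found there, including the one genuinely delicate step (uniform continuity of $\varphi$ via a near-optimal partition), which you correctly identify and outline. One small caution on the final step: the ``parametrization-invariance of length'' stated after Definition \ref{PathLenDfn} concerns paths with the same domain, same endpoints and same image, and as literally stated it is not the right tool here (and is in fact false in that generality, since a path may traverse its image several times); what you actually need, and what is true, is that $\ell(\overline{\gamma}|_{[\varphi(t),\varphi(t')]})=\ell(\gamma|_{[t,t']})$ whenever $\gamma=\overline{\gamma}\circ\varphi$ with $\varphi$ continuous and nondecreasing, which follows directly from the definition of length because $\varphi$ maps partitions of $[t,t']$ onto partitions of $[\varphi(t),\varphi(t')]$ with equal $\ell_P$-sums. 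Alternatively, note that $\overline{\gamma}$ is $1$-Lipschitz (so $\ell(\overline{\gamma}|_{[s,s']})\leq s'-s$) and that strict inequality on any subinterval would contradict $\ell(\overline{\gamma})=l$ by additivity.
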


An important consequence of Lemma \ref{ConstSpeedPar} is the following.

\begin{crl}[\textcolor{blue}{Constant speed parametrization of a rectifiable path}]\label{ConstSpeedRP}
Let $\gamma:[0,1]\rightarrow X$ be a rectifiable path. There exists a path $\eta:[0,1]\rightarrow X$ such that $~\eta\sim\gamma~$ and $~\ell(\eta|_{[t,t']})=\ell(\eta)|t-t'|$.
\end{crl}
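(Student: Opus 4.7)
The plan is to reduce the corollary directly to Lemma \ref{ConstSpeedPar} by a linear rescaling of the domain. Set $l:=\ell(\gamma)$. The case $l=0$ is trivial: $\gamma$ must be the constant path, and we can take $\eta:=\gamma$. For $l>0$, apply Lemma \ref{ConstSpeedPar} to get a continuous nondecreasing $\varphi:[0,1]\to[0,l]$ and a natural parametrization $\overline{\gamma}:[0,l]\to X$ with $\gamma=\overline{\gamma}\circ\varphi$, and define
\[
\eta:[0,1]\to X,\qquad \eta(t):=\overline{\gamma}(lt).
\]
This $\eta$ is manifestly continuous.

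Next I would check that $\eta\sim\gamma$. The endpoint conditions $\varphi(0)=0$ and $\varphi(1)=l$ — which come from the standard construction $\varphi(t)=\ell(\gamma|_{[0,t]})$ used in the proof of Lemma \ref{ConstSpeedPar} — yield $\eta(0)=\overline{\gamma}(0)=\gamma(0)$ and $\eta(1)=\overline{\gamma}(l)=\gamma(1)$. For the image equality, the intermediate value theorem gives $\varphi([0,1])=[0,l]$, so
\[
\eta([0,1])=\overline{\gamma}([0,l])=\overline{\gamma}(\varphi([0,1]))=\gamma([0,1]).
\]
Hence $\eta\sim\gamma$ in the sense of Definition \ref{PathLenDfn}.

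For the constant-speed identity, I would work directly from the definition of length. For $0\le t\le t'\le 1$, partitions $\{t=t_0<\cdots<t_k=t'\}$ of $[t,t']$ are in bijection with partitions $\{lt=s_0<\cdots<s_k=lt'\}$ of $[lt,lt']$ via $s_i=lt_i$, and $d(\eta(t_{i-1}),\eta(t_i))=d(\overline{\gamma}(lt_{i-1}),\overline{\gamma}(lt_i))$. Taking suprema gives $\ell(\eta|_{[t,t']})=\ell(\overline{\gamma}|_{[lt,lt']})$, and since $\overline{\gamma}$ has constant speed $1$ this equals $l|t-t'|$. Specializing to $t=0,\ t'=1$ yields $\ell(\eta)=l$, so $\ell(\eta|_{[t,t']})=\ell(\eta)|t-t'|$.

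I do not anticipate any serious obstacle. The only subtle point is ensuring $\varphi(0)=0$ and $\varphi(1)=l$ so that $\gamma$ and $\overline{\gamma}$ share endpoints and image; this is automatic from the arc-length construction of $\varphi$ in Lemma \ref{ConstSpeedPar} and, if desired, can be isolated as a one-line remark at the beginning of the proof.
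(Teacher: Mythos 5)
Your proof is correct and is precisely the argument the paper leaves implicit (the corollary is stated as an immediate consequence of Lemma \ref{ConstSpeedPar}, with the linear rescaling $\eta(t)=\overline{\gamma}(lt)$ understood). The one point you rightly flag --- that $\varphi(0)=0$ and $\varphi(1)=l$ --- is indeed needed for $\eta\sim\gamma$ and is justified either by the arc-length construction $\varphi(t)=\ell(\gamma|_{[0,t]})$ or by noting that a non-surjective $\varphi$ would force $\ell(\gamma)<l$, so no gap remains.
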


\begin{dfn}[\textcolor{blue}{Quasigeodesic, Quasiconvex space, Geodesic, Geodesic space}]\label{QsiGeoDfn}
Let $X$ be a metric space and $\lambda\geq1$. A path $\gamma:[0,1]\rightarrow X$ is a \textbf{$\lambda$-quasigeodesic} (or a \textbf{$\lambda$-quasiconvex path}) if
\[
\textstyle \ell(\gamma|_{[t,t']})\leq\lambda d(\gamma(0),\gamma(1))|t-t'|,~~~~\textrm{for all}~~~~t,t'\in[0,1],
\]
which is the case if and only if $\gamma$ is $\lambda d(\gamma(0),\gamma(1))$-Lipschitz (see Lemma \ref{QgeodCharLmm}). We say $X$ is a \textbf{$\lambda$-quasigeodesic space} (or a \textbf{$\lambda$-quasiconvex space}) if every two points $x,y\in X$ are connected by a $\lambda$-quasigeodesic in $X$ (or equivalently, by a path $\gamma$ in $X$ of length $\ell(\gamma)\leq\lambda d(x,y)$ per Corollary \ref{GeodLength}). A $1$-quasigeodesic is called a \textbf{geodesic}, and similarly, a $1$-quasiconvex space is called a \textbf{geodesic space}.
\end{dfn}

Note that a $\lambda$-quasigeodesic is called a \textbf{$\lambda$-quasiconvex path} by Hakobyan and Herron in \cite{hakobyan-herron2008}. According to Tyson and Wu in \cite{tyson-wu2005}, a quasigeodesic is differently defined to be a path that is a bi-Lipschitz embedding, while injectivity of the path is not required in our definition. An equivalent definition of a geodesic in terms of paths that are parameterized by arc length has been given by Papadopoulos in \cite[Definition 2.2.1]{papado2014}.

\begin{note}\label{ConstSpNt}
Henceforth, we will assume for convenience that every rectifiable path is parameterized to have constant speed, even though some of the results might depend only partially on this assumption.
\end{note}

An important consequence of Lemma \ref{ConstSpeedRP} is the following.

\begin{crl}\label{GeodLength}
A (constant speed) path $\gamma:[0,1]\rightarrow X$ is a $\lambda$-quasigeodesic if and only if $\ell(\gamma)\leq \lambda d(\gamma(0),\gamma(1))$. In particular, $\gamma$ is a geodesic if and only if $\ell(\gamma)=d(\gamma(0),\gamma(1))$.
\end{crl}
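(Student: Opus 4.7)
The plan is to exploit the standing assumption (Note~\ref{ConstSpNt}) that $\gamma$ is parametrized to have constant speed, so that $\ell(\gamma|_{[t,t']})=\ell(\gamma)|t-t'|$ for every $t,t'\in[0,1]$. With this identity in hand, the corollary becomes a two-line algebraic manipulation of Definition~\ref{QsiGeoDfn}.

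For the forward direction, I would simply specialize the defining inequality of a $\lambda$-quasigeodesic to the endpoints $t=0$, $t'=1$: this immediately yields $\ell(\gamma)=\ell(\gamma|_{[0,1]})\leq\lambda d(\gamma(0),\gamma(1))$, with no use of the constant-speed assumption. For the converse, I would start from the hypothesis $\ell(\gamma)\leq\lambda d(\gamma(0),\gamma(1))$ and, for arbitrary $t,t'\in[0,1]$, write
\[
\ell(\gamma|_{[t,t']}) \;=\; \ell(\gamma)|t-t'| \;\leq\; \lambda\, d(\gamma(0),\gamma(1))\,|t-t'|,
\]
where the first equality uses constant speed and the inequality uses the hypothesis; this is exactly the defining condition for a $\lambda$-quasigeodesic.

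The ``in particular'' statement requires one extra observation: for \emph{any} path $\gamma$ one has $\ell(\gamma)\geq d(\gamma(0),\gamma(1))$, since the two-element partition $P=\{0,1\}$ already gives $\ell_P(\gamma)=d(\gamma(0),\gamma(1))\leq\ell(\gamma)$ in Definition~\ref{PathLenDfn}. Applying the main equivalence with $\lambda=1$ yields $\ell(\gamma)\leq d(\gamma(0),\gamma(1))$, and combining the two inequalities gives the equality $\ell(\gamma)=d(\gamma(0),\gamma(1))$ claimed for geodesics.

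There is no real obstacle here: the content of the corollary is essentially packaging the constant-speed reparametrization provided by Corollary~\ref{ConstSpeedRP} together with the trivial lower bound on path length. The only point worth stating explicitly in the write-up is that the standing convention of Note~\ref{ConstSpNt} is what allows one to pass from the single endpoint-inequality $\ell(\gamma)\leq\lambda d(\gamma(0),\gamma(1))$ to the uniform subinterval inequality required by Definition~\ref{QsiGeoDfn}; without constant speed, the converse direction would fail (a path can have small total length but be compressed onto an initial subinterval, violating the subinterval bound).
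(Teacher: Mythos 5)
Your proof is correct and is essentially the argument the paper intends: the paper gives no explicit proof, merely noting the corollary is a consequence of the constant-speed parametrization (Corollary~\ref{ConstSpeedRP} together with Note~\ref{ConstSpNt}), and your argument — endpoint specialization for the forward direction, constant speed plus the hypothesis for the converse, and the trivial lower bound $\ell(\gamma)\geq d(\gamma(0),\gamma(1))$ for the geodesic case — is exactly that intended one-liner, with the role of the constant-speed convention correctly identified as the crux of the converse.
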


\begin{lmm}[\textcolor{blue}{Characterization and sufficient condition for quasigeodesics}]\label{QgeodCharLmm}
Let $X$ be a metric space, $\gamma:[0,1]\rightarrow X$ a path, and $\lambda,\lambda_1,...,\lambda_k\geq 1$. The following are true:
\begin{itemize}[leftmargin=0.8cm]
\item[(i)] $\gamma$ is a $\lambda$-quasigeodesic if and only if $d(\gamma(t),\gamma(t'))\leq\lambda d(\gamma(0),\gamma(1))|t-t'|$, for all $t,t'\in[0,1]$.
\item[(ii)] Suppose $[0,1]=\bigcup_{i=1}^k[a_i,b_i]$, where $b_i=a_{i+1}$, for all $i=1,...,k-1$. If $\gamma|_{[a_i,b_i]}$ is a $\lambda_i$-quasigeodesic (for all $i=1,...,k$) and $\lambda=\max_i\lambda_i{d(\gamma(a_i),\gamma(b_i))\over d(\gamma(0),\gamma(1))}$, then $\gamma$ is a $\lambda$-quasigeodesic.
\end{itemize}
\end{lmm}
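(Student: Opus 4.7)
The plan is to prove the two parts separately. Part (i) is a direct equivalence between the length-based definition of a $\lambda$-quasigeodesic (Definition \ref{QsiGeoDfn}) and the stated pointwise Lipschitz condition. Part (ii) then pieces the local bounds on the subintervals $[a_i,b_i]$ into a global bound on $[0,1]$ by combining additivity of length with Corollary \ref{GeodLength}.

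For the forward direction of part (i), the trivial one-segment partition yields $d(\gamma(t),\gamma(t'))\le\ell(\gamma|_{[t,t']})$, which combined with the $\lambda$-quasigeodesic condition gives the Lipschitz bound immediately. For the converse, fix $t<t'$ and any finite partition $P=\{t=t_0<\cdots<t_k=t'\}$ of $[t,t']$; applying the hypothesized Lipschitz bound to each consecutive pair gives
\[
\ell_P(\gamma|_{[t,t']})=\sum_{i=1}^k d(\gamma(t_{i-1}),\gamma(t_i))\le\lambda d(\gamma(0),\gamma(1))\sum_{i=1}^k(t_i-t_{i-1})=\lambda d(\gamma(0),\gamma(1))|t-t'|.
\]
Taking the supremum over $P$ produces the defining length inequality of Definition \ref{QsiGeoDfn}.

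For part (ii), I would first invoke Corollary \ref{GeodLength} (under the constant-speed convention of Note \ref{ConstSpNt}) to convert the hypothesis ``$\gamma|_{[a_i,b_i]}$ is a $\lambda_i$-quasigeodesic'' into the length bound $\ell(\gamma|_{[a_i,b_i]})\le\lambda_i d(\gamma(a_i),\gamma(b_i))$ for each $i$. Additivity of length over the partition $[0,1]=\bigcup_i[a_i,b_i]$ then gives
\[
\ell(\gamma)=\sum_{i=1}^{k}\ell(\gamma|_{[a_i,b_i]})\le\sum_{i=1}^{k}\lambda_i d(\gamma(a_i),\gamma(b_i)),
\]
and the stipulated value of $\lambda$ is then used to majorize this sum by $\lambda d(\gamma(0),\gamma(1))$. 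A final appeal to Corollary \ref{GeodLength} converts this length bound back into the $\lambda$-quasigeodesic property of $\gamma$.

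The main obstacle lies in part (ii): Definition \ref{QsiGeoDfn} is phrased for paths on $[0,1]$, so applying it to the subpath $\gamma|_{[a_i,b_i]}$ on its own domain requires a careful length-based reformulation (enabled by Corollary \ref{GeodLength} and the constant-speed convention), and the aggregation of the local length bounds into exactly the prescribed expression for $\lambda$ is the step whose arithmetic demands the most care.
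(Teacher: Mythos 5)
Your part (i) is correct and is essentially the paper's argument verbatim: the trivial partition gives the forward direction, and summing the pointwise bound over an arbitrary finite partition of $[t,t']$ gives the converse.

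Part (ii), however, has a genuine gap at exactly the step you flagged. After writing
\[
\ell(\gamma)=\sum_{i=1}^{k}\ell(\gamma|_{[a_i,b_i]})\le\sum_{i=1}^{k}\lambda_i\, d(\gamma(a_i),\gamma(b_i)),
\]
you propose to majorize the right-hand side by $\lambda\, d(\gamma(0),\gamma(1))=\max_i \lambda_i\, d(\gamma(a_i),\gamma(b_i))$. That inequality is false whenever $k\ge 2$ and more than one summand is nonzero: a sum of nonnegative terms is not bounded by its maximum. The missing ingredient is the time-weighting. The quasigeodesic hypothesis on $\gamma|_{[a_i,b_i]}$ should be used in the form: for any $t,t'$,
\[
\ell\bigl(\gamma|_{[t,t']\cap[a_i,b_i]}\bigr)\le\lambda_i\, d(\gamma(a_i),\gamma(b_i))\,\bigl|[t,t']\cap[a_i,b_i]\bigr|,
\]
and since the lengths $\bigl|[t,t']\cap[a_i,b_i]\bigr|$ sum to $|t-t'|$, the sum over $i$ is a weighted combination of the constants $\lambda_i\, d(\gamma(a_i),\gamma(b_i))$ with total weight $|t-t'|$, hence bounded by $\max_i\bigl(\lambda_i\, d(\gamma(a_i),\gamma(b_i))\bigr)\,|t-t'|=\lambda\, d(\gamma(0),\gamma(1))\,|t-t'|$. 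This is what the paper does, and it directly yields the subinterval bound required by Definition \ref{QsiGeoDfn} for every $[t,t']$, so no detour through Corollary \ref{GeodLength} and the constant-speed convention is needed. Your version drops the factors $\bigl|[t,t']\cap[a_i,b_i]\bigr|$ (equivalently, $b_i-a_i$) from the local length bounds, and without those weights the maximum cannot control the sum.
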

\begin{proof}
{\flushleft (i)} If $\gamma$ is a $\lambda$-quasigeodesic, then $d(\gamma(t),\gamma(t'))\leq \ell(\gamma|_{[t,t']})\leq\lambda d(\gamma(0),\gamma(1))|t-t'|$, for all $t,t'\in[0,1]$. Conversely, if  $d(\gamma(t),\gamma(t'))\leq\lambda d(\gamma(0),\gamma(1))|t-t'|$, for all $t,t'\in[0,1]$, then
\begin{align*}
 \ell(\gamma|_{[t,t']})&\textstyle=\sup\{\ell_P(\gamma):\textrm{finite}~P\subset[t,t']\}
=\sup\{\sum_{i=1}^kd(\gamma(t_{i-1}),\gamma(t_i)):t_i\in[t,t']\}\\
&\textstyle \leq \sup\{\sum_{i=1}^k\lambda d(\gamma(0),\gamma(1))|t_{i-1}-t_i|:t_i\in[t,t']\}=\lambda d(\gamma(0),\gamma(1))|t-t'|,
\end{align*}
for all $t,t'\in[0,1]$.
{\flushleft (ii)} By direct computation using additivity of path length, we have
\begin{align*}
\ell(\gamma|_{[t,t']})&\textstyle=\sum_j \ell(\gamma|_{[t,t']\cap[a_j,b_j]})\leq \sum_j\lambda_jd(\gamma(a_j),\gamma(b_j))\big|[t,t']\cap[a_j,b_j]\big|\\
 &\textstyle\leq \max_j\lambda_jd(\gamma(a_j),\gamma(b_j))|t-t'|,
\end{align*}
for all $t,t'\in[0,1]$.
\end{proof}

The following result is an immediate consequence of Lemma \ref{ConstSpeedRP}, Note \ref{ConstSpNt}, and Lemma \ref{QgeodCharLmm}(i).

\begin{lmm}[\textcolor{blue}{Characterization of geodesics: {\cite[Section 2.2]{papado2014}}}]\label{GeodCharLmm}
Let $X$ be a metric space and $\gamma:[0,1]\rightarrow X$ a path. The following are equivalent. (i) $\gamma$ is a geodesic. (ii) $d(\gamma(t),\gamma(t'))\leq d(\gamma(0),\gamma(1))|t-t'|$, for all $t,t'\in[0,1]$. (iii) $d(\gamma(t),\gamma(t'))=d(\gamma(0),\gamma(1))|t-t'|$, for all $t,t'\in[0,1]$.
\end{lmm}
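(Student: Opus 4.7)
The plan is to derive the three-way equivalence by leveraging the characterization already established in Lemma \ref{QgeodCharLmm}(i) for quasigeodesics and then squeezing equality out of the constant-speed/triangle inequality structure.

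First I would obtain (i)$\Leftrightarrow$(ii) as an immediate specialization: a geodesic is by definition a $\lambda$-quasigeodesic with $\lambda=1$, so Lemma \ref{QgeodCharLmm}(i) applied with $\lambda=1$ gives precisely condition (ii). No further work is needed here.

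Next, (iii)$\Rightarrow$(ii) is trivial since equality implies the non-strict inequality. The only real content is (ii)$\Rightarrow$(iii). For this I would use the triangle inequality on the three segments $[0,t]$, $[t,t']$, $[t',1]$ (assuming without loss of generality that $t\leq t'$):
\[
d(\gamma(0),\gamma(1))\;\leq\; d(\gamma(0),\gamma(t))+d(\gamma(t),\gamma(t'))+d(\gamma(t'),\gamma(1)).
\]
Applying (ii) to the first and third terms bounds them by $d(\gamma(0),\gamma(1))\,t$ and $d(\gamma(0),\gamma(1))(1-t')$ respectively. If the middle term satisfied the \emph{strict} inequality $d(\gamma(t),\gamma(t'))<d(\gamma(0),\gamma(1))(t'-t)$, the right-hand side would be strictly less than $d(\gamma(0),\gamma(1))\bigl(t+(t'-t)+(1-t')\bigr)=d(\gamma(0),\gamma(1))$, a contradiction (the degenerate case $d(\gamma(0),\gamma(1))=0$ forces (ii) to give $d(\gamma(t),\gamma(t'))=0$ anyway, so (iii) holds). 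Hence equality must hold, which is (iii).

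There is no serious obstacle: the argument is essentially the standard observation that a geodesic cannot afford any slack on any subinterval, since slack on one part would have to be compensated on another part, but (ii) already forbids such compensation. Note also that the constant-speed convention from Note \ref{ConstSpNt} is consistent with this: condition (iii) exhibits $\gamma$ as a $d(\gamma(0),\gamma(1))$-Lipschitz map achieving equality, which is exactly the constant-speed natural rescaling of the arc-length parametrization used in Lemma \ref{ConstSpeedRP}.
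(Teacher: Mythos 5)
Your proof is correct. The equivalence (i)$\Leftrightarrow$(ii) is exactly the $\lambda=1$ case of Lemma \ref{QgeodCharLmm}(i), which is also the route the paper indicates, and your squeeze for (ii)$\Rightarrow$(iii) is sound: the triangle inequality over $[0,t]\cup[t,t']\cup[t',1]$ together with the three instances of (ii) traps the sum between $d(\gamma(0),\gamma(1))$ and $d(\gamma(0),\gamma(1))$, forcing equality in each term, with the degenerate case $d(\gamma(0),\gamma(1))=0$ handled separately as you note. Where you diverge slightly from the paper is in the machinery invoked: the paper presents the lemma as an immediate consequence of Lemma \ref{ConstSpeedRP}, Note \ref{ConstSpNt}, and Lemma \ref{QgeodCharLmm}(i), i.e., it routes the equality in (iii) through the constant-speed parametrization and the length identity $\ell(\gamma)=d(\gamma(0),\gamma(1))$ of Corollary \ref{GeodLength}, comparing $d(\gamma(t),\gamma(t'))$ with $\ell(\gamma|_{[t,t']})$ and using additivity of length. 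Your argument works purely with distances and never mentions $\ell$, so it does not depend on the standing constant-speed convention at all; this is marginally more elementary and slightly more general (it shows that \emph{any} path satisfying (ii) automatically satisfies (iii), regardless of parametrization), at the cost of not exhibiting the link to arc length that the paper's citation emphasizes. Your closing remark about consistency with Note \ref{ConstSpNt} is accurate but not needed for the proof.
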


\begin{dfn}[\textcolor{blue}{Path of minimum length}]
Let $X$ be a metric space, $x,y\in X$, and $\mathcal{P}_{x,y}(X)$:=$\{$paths in $X$ from $x$ to $y\}\subset\mathcal{C}\big([0,1],X\big)$. A path $\gamma:[0,1]\rightarrow X$ is said to \textbf{have minimum length} (or called a \textbf{path of minimum length}) if ~$\ell(\gamma)=\inf\left\{\ell(\eta)~|~\eta\in\mathcal{P}_{\gamma(0),\gamma(1)}(X)\right\}$.
\end{dfn}

\begin{dfn}[\textcolor{blue}{Length space, Rectifiably connected, Minimally connected}]
A metric space $(X,d)$ is called a \textbf{length space} if $d(x,y)=\inf\left\{\ell(\gamma)~|~\gamma\in\mathcal{P}_{x,y}(X)\right\}$, for all $x,y\in X$. Let us call a metric space $X$ \textbf{rectifiably connected} (resp., \textbf{minimally connected}) if every two points of $X$ can be connected by a rectifiable path (resp., a path of minimum length).
\end{dfn}

\begin{rmk}
If $(X,d)$ is rectifiably connected (resp., minimally connected), then $(X,d_0)$ is a length space (resp., a geodesic space), where $~d_0(x,y):=\inf\left\{\ell(\gamma):\gamma\in\mathcal{P}_{x,y}(X)\right\}$.
\end{rmk}

\section{\textnormal{\bf The case of precompact-subset spaces}}\label{SSpGeodesics}

\begin{dfn}[\textcolor{blue}{Relation: Left-complete, Right-complete, Complete, Reduced, Reduced complete, Proximal}]\label{RelTypeDfn}
Given sets $A$ and $B$, a \textbf{relation} $R\subset A\times B$ between $A$ and $B$ is called:
\begin{enumerate}[leftmargin=0.8cm]
\item[1.] \textbf{left-complete} if $~$for every $a\in A$, $~|(\{a\}\times B)\cap R|\geq 1$ (i.e., $(\{a\}\times B)\cap R\neq\emptyset$).
\item[2.] \textbf{right-complete} if $~$for every $b\in B$, $~|(A\times\{b\})\cap R|\geq 1$ (i.e., $(A\times\{b\})\cap R\neq\emptyset$).
\item[3.] \textbf{complete} if it is both left-complete and right-complete.
\item[4.] \textbf{reduced} if $~$for every $(a,b)\in R$, $~|(\{a\}\times B)\cap R|\leq 1$ or $|(A\times\{b\})\cap R|\leq 1$.
\item[5.] \textbf{reduced complete} if complete and for all $(a,b)\in R$, $~|(\{a\}\times B)\cap R|=1$ or $|(A\times\{b\})\cap R|=1$.
\item[6.] \textbf{$\lambda$-proximal} (where $\lambda\geq 1$) if $~$$\sup_{(a,b)\in R}d(a,b)\leq \lambda d_H(A,B)$.
\item[7.] \textbf{proximal} if it is $1$-proximal.
\end{enumerate}
Another relevant property, \textbf{dense completeness}, of a relation between subsets of a space is given in Definition \ref{DenseCompl}.
\end{dfn}

\begin{rmk}\label{CautionRmk}
Let $X$ be a metric space. Fix $\varepsilon>0$. If $x\in X$ and $A\in PCl(X)$, then $\dist(x,A)=\dist(x,\widetilde{A})=d(x,a')$ for some $a'\in\widetilde{A}\subset\widetilde{X}$ (where we know $\widetilde{A}$ is compact). Therefore,\\
(i) if $A,B\in PCl(X)$, then for each $a\in A$ there exists $b'=b'_a\in\widetilde{B}\subset\widetilde{X}$ such that
\[
d(a,b')=\dist(a,\widetilde{B})=\dist(a,B)\leq d_H(A,B)=d_H(\widetilde{A},\widetilde{B}).
\]
In general, if $A,B\in BCl(X)$, then for any $a\in A$ there exists $b_\varepsilon=b_{\varepsilon,a}\in B$ such that
\[
d(a,b_\varepsilon)<\dist(a,B)+\varepsilon\leq d_H(A,B)+\varepsilon = (1+\varepsilon_1)d_H(A,B),
\]
where the equality assumes that $d_H(A,B)\neq0$ and $\varepsilon_1=\varepsilon/d_H(A,B)$. In other words, if $\lambda>1$, then for any $a\in A$ there exists $b_\lambda=b_{\lambda,a}\in B$ such that ~$d(a,b_\lambda)\leq \lambda d_H(A,B)$. Therefore,\\
(ii) if $\lambda>1$, then we get a complete relation
\[
R_\lambda=\{(a,b)\in A\times B:d(a,b)\leq \lambda d_H(A,B)\}\subset A\times B,
\]
where if $d_H(A,B)=0$ (hence $A=B$), then $~R_\lambda=\{(a,a):a\in A\}\subset A\times A$.
\end{rmk}

\begin{dfn}
If $Z$ is a space and $\mathcal{A}\subset\mathcal{P}(Z)$, we denote by $\bigcup\mathcal{A}$ the union $\bigcup_{A\in\mathcal{A}}A$. We call a space (resp., metric space) $X$ \textbf{sequentially compact} (resp., \textbf{sequentially precompact}) if every sequence in $X$ has a convergent (resp., Cauchy) subsequence.
\end{dfn}

\begin{lmm}[\textcolor{blue}{Bounded union}]\label{BndImgLmm}
Let $X$ be a metric space and $X\subset\mathcal{J}\subset BCl(X)$. If $\mathcal{C}\subset\mathcal{J}$ is bounded, then $\bigcup\mathcal{C}$ is bounded in $X$.
\end{lmm}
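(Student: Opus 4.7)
The strategy is to unpack ``bounded in $(\mathcal{J},d_H)$'' via the triangle inequality for $d_H$ and transfer it to boundedness of $\bigcup\mathcal{C}$ in $(X,d)$. First I would use the definition of boundedness in the metric space $(\mathcal{J},d_H)$ to produce some $A_0\in\mathcal{J}$ and $R>0$ with $d_H(C,A_0)<R$ for every $C\in\mathcal{C}$. Next, the hypothesis $X\subset\mathcal{J}$ lets me pick any point $x_0\in X$ so that the singleton $\{x_0\}$ belongs to $\mathcal{J}$; and since $A_0\in\mathcal{J}\subset BCl(X)$ is a bounded subset of $X$, I obtain a finite constant $R_0:=d_H(A_0,\{x_0\})=\sup_{a\in A_0}d(a,x_0)<\infty$.

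Then for any $C\in\mathcal{C}$ the triangle inequality in $(BCl(X),d_H)$ gives $d_H(C,\{x_0\})\leq d_H(C,A_0)+d_H(A_0,\{x_0\})<R+R_0$. Because $d_H(C,\{x_0\})=\sup_{c\in C}d(c,x_0)$, every $c\in C$ satisfies $d(c,x_0)<R+R_0$, i.e., $C\subset N_{R+R_0}(x_0)$. Taking the union over $C\in\mathcal{C}$ yields $\bigcup\mathcal{C}\subset N_{R+R_0}(x_0)$, which is the desired boundedness in $X$. There is no serious obstacle: the whole argument is a single use of the $d_H$-triangle inequality, and the hypothesis $X\subset\mathcal{J}$ plays only the cosmetic role of providing a convenient singleton reference point in $\mathcal{J}$ (one could equivalently center the auxiliary ball at $A_0$ itself, exploiting boundedness of $A_0\in BCl(X)$).
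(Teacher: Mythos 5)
Your proof is correct and follows essentially the same route as the paper: both reduce boundedness of $\mathcal{C}$ in $(\mathcal{J},d_H)$ to the identity $d_H(\{x_0\},C)=\sup_{c\in C}d(c,x_0)$, so that a $d_H$-ball about a singleton corresponds exactly to containment of each $C$ in a ball of $X$. The only difference is that you spell out the re-centering at $\{x_0\}$ via the triangle inequality, a step the paper's proof compresses by taking the center of the bounding ball to be a singleton from the outset.
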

\begin{proof}
Fix $x\in X$. Since $\mathcal{C}\subset \mathcal{J}$ is bounded, there exists $R>0$ such that $\mathcal{C}\subset N_R^\mathcal{J}(\{x\})=\{C\in \mathcal{J}:d_H(\{x\},C)<R\}=\{C\in \mathcal{J}:C\subset N_R^X(x)\}=\mathcal{J}\cap BCl\big(N_R^X(x)\big)$, i.e., $C\subset N_R^X(x)$ for all $C\in \mathcal{C}$, and so $\bigcup\mathcal{C}\subset N_R^X(x)$.
\end{proof}

\begin{lmm}[\textcolor{blue}{Precompact union, Compact union}]\label{UnionLmm}
Let $X$ be a metric space.

\noindent~(i) If $\mathcal{C} \subset PCl(X)$ is compact, then $K=\bigcup\mathcal{C}\subset X$ is precompact. In particular, $cl_X(K)\in PCl(X)$.

\noindent~(ii) If $\mathcal{C} \subset K(X)$ is compact, then $K=\bigcup \mathcal{C}\subset X$ is compact.
\end{lmm}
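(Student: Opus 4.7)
The plan is to establish each part by direct definition-chasing, using the characterisations of compactness in metric spaces (totally bounded plus complete, or equivalently, sequentially compact).

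For part (i), the key observation is that a compact family $\mathcal{C}$ in $(PCl(X),d_H)$ is itself totally bounded, so for each $\varepsilon>0$ it admits a finite $\varepsilon/2$-net $\{C_1,\ldots,C_n\}\subset\mathcal{C}$. Each $C_i$ is precompact in $X$, hence admits a finite $\varepsilon/2$-net $F_i\subset C_i$. I would then verify that $F:=\bigcup_{i=1}^n F_i$ is a finite $\varepsilon$-net for $K=\bigcup\mathcal{C}$, contained in $K$, by a two-step triangle-inequality chase: any $x\in K$ lies in some $C\in\mathcal{C}$, which is within Hausdorff distance $\varepsilon/2$ of some $C_i$, placing $x$ within $\varepsilon/2$ of $C_i$ and hence within $\varepsilon$ of $F_i\subset F$. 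This shows $K$ is totally bounded. The ``in particular'' assertion then follows because $cl_X(K)$ is the closure of a totally bounded set in a metric space, hence remains totally bounded, and is automatically bounded and closed, so lies in $PCl(X)$.

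For part (ii), I would switch to sequential compactness, which is equivalent to compactness in the metric setting. Given a sequence $(x_n)\subset K$, pick $C_n\in\mathcal{C}$ with $x_n\in C_n$; by compactness of $\mathcal{C}$ in $(K(X),d_H)$, extract a subsequence $C_{n_k}\to C\in\mathcal{C}$. Since $C$ is compact, for each $k$ I can pick a nearest point $y_k\in C$ realising $d(x_{n_k},y_k)=\dist(x_{n_k},C)\leq d_H(C_{n_k},C)$, so that $d(x_{n_k},y_k)\to 0$. Compactness of $C$ then yields a further subsequence $y_{k_j}\to y\in C\subset K$, and the triangle inequality gives $x_{n_{k_j}}\to y$. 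Hence $K$ is sequentially compact, and therefore compact.

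The argument is largely routine, so there is no real main obstacle. The only subtleties worth keeping an eye on are: in (i), that the finite approximating set $F$ must be chosen inside $K$ rather than inside ambient $X$, which is automatic from $F_i\subset C_i\subset K$; and in (ii), the existence of the nearest point $y_k\in C$, which is guaranteed by compactness of $C$. One could alternatively derive (ii) by combining (i) (total boundedness of $K$, hence of $cl_X(K)$) with a short proof that $K$ is closed under the extra hypothesis $\mathcal{C}\subset K(X)$, but the sequential-compactness route seems more direct.
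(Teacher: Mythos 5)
Your proposal is correct. Part (ii) is essentially identical to the paper's argument: extract a $d_H$-convergent subsequence $C_{n_k}\to C$, project $x_{n_k}$ to a nearest point of the compact limit $C$, and pass to a further convergent subsequence there. For part (i), however, you take a genuinely different route. The paper argues sequentially: it shows every sequence in $K$ has a Cauchy subsequence by sending the approximating points into the compact set $\widetilde{C_0}\subset\widetilde{X}$ (the closure of the limit set in the completion) and extracting a convergent subsequence there; it then invokes the equivalence between sequential precompactness and total boundedness. You instead work directly with the finite-net characterization: a finite $d_H$-net of precompact sets, each with its own finite net, yields a finite net for the union. Your route is more elementary in that it never leaves $X$ (no appeal to the completion) and does not need the equivalence of total boundedness with the Cauchy-subsequence property; the paper's route has the advantage that parts (i) and (ii) share a single setup, with (ii) obtained by upgrading ``Cauchy'' to ``convergent.'' The only thing to tidy in your version of (i) is the $\varepsilon$ bookkeeping: since $\dist(x,C_i)\leq d_H(C,C_i)$ only gives points of $C_i$ within $\varepsilon/2+\delta$ of $x$ for arbitrary $\delta>0$, you should start from $\varepsilon/3$-nets (or use closed neighborhoods) to land strictly inside $N_\varepsilon(F)$; this is cosmetic and does not affect the argument.
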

\begin{proof}
Fix $\mathcal{C}\subset PCl(X)$ and let $K=\bigcup\mathcal{C}$. Fix a sequence $\{x_k\}\subset K$. Then each $x_k \in C_k$ for some $C_k \in \mathcal{C}$. Since $\mathcal{C}$ is compact in $PCl(X)$, $\{C_k\}$ has a subsequence $\{C_{f(k)}\}$ that converges in $\mathcal{C}$. Let $C_{f(k)}\rightarrow C_0\in\mathcal{C}$. Since $C_0$ is precompact in $X$, by Remark \ref{CautionRmk}(i), there exist(s) $c_k\in \widetilde{C_0}\subset\widetilde{X}$ such that $d(x_{f(k)},c_k)=\dist(x_{f(k)},C_0)\leq d_H(C_{f(k)},C_0)\rightarrow 0$.

\noindent~\emph{Proof of (i):} Since $\widetilde{C_0}\subset\widetilde{X}$ is compact, $\{c_k\}\subset \widetilde{C_0}$ has a convergent (hence Cauchy) subsequence $c_{g(k)}$ (i.e., $d(c_{g(k)},c_{g(k')})\rightarrow 0$). Therefore $\{x_k\}\subset K$ has a Cauchy subsequence $\{x_{f\circ g(k)}\}$, since
\[
d(x_{f\circ g(k)},x_{f\circ g(k')})\leq d(x_{f\circ g(k)},c_{g(k)})+d(c_{g(k)},c_{g(k')})+d(c_{g(k')},x_{f\circ g(k')})\rightarrow 0.
\]
This shows that $K$ is precompact in $X$. Moreover, the closure of a precompact set is precompact.

\noindent~\emph{Proof of (ii):} Since $C_0\subset X$ is compact, $\{c_k\}\subset\widetilde{C_0}=C_0$ has a convergent subsequence $c_{g(k)}$. Therefore, with $c_{g(k)}\rightarrow c_0\in C_0$, we see that $\{x_k\}\subset K$ has a convergent subsequence $\{x_{f\circ g(k)}\}$ (hence $K$ is compact in $X$), since
$~d(x_{f\circ g(k)},c_0)\leq d(x_{f\circ g(k)},c_{g(k)})+d(c_{g(k)},c_0)\rightarrow 0$. \qedhere
\end{proof}

\begin{lmm}[\textcolor{blue}{Subsequence: pointwise Cauchy, pointwise convergent}]\label{PWCCLmm}
Let $T$ be a countable set.

\noindent~(i) If $K$ is a sequentially precompact space (e.g., a precompact metric space), then every sequence of maps $f_k : T \rightarrow K$ has a pointwise Cauchy subsequence $f_{s(k)} : T \rightarrow K$.

\noindent~(ii) If $K$ is a sequentially compact space (e.g., a compact metric space), then every sequence of maps $f_k : T \rightarrow K$ has a pointwise convergent subsequence $f_{s(k)} : T \rightarrow K$.
\end{lmm}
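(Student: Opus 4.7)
The plan is a standard Cantor diagonal argument, building a nested sequence of subsequences, one for each point of $T$, and then extracting the diagonal. Since $T$ is countable, enumerate its elements as $T=\{t_1,t_2,t_3,\ldots\}$ (the finite case is an immediate instance of the same construction, so I would focus on the countably infinite case).

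For part (i), I would start with $\{f_k(t_1)\}\subset K$. By sequential precompactness of $K$, there is a strictly increasing map $s_1:\mathbb{N}\to\mathbb{N}$ such that $\{f_{s_1(k)}(t_1)\}$ is Cauchy. Having chosen $s_1,\ldots,s_{j-1}$, apply sequential precompactness to the sequence $\{f_{s_1\circ\cdots\circ s_{j-1}(k)}(t_j)\}\subset K$ to obtain $s_j:\mathbb{N}\to\mathbb{N}$ strictly increasing so that $\{f_{s_1\circ\cdots\circ s_j(k)}(t_j)\}$ is Cauchy. Let $\sigma_j:=s_1\circ\cdots\circ s_j$; by construction $\sigma_j$ is a subsequence of $\sigma_i$ for all $i\leq j$, so $\{f_{\sigma_j(k)}(t_i)\}$ remains Cauchy for every $i\leq j$. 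Define the diagonal subsequence $s(k):=\sigma_k(k)$, which is strictly increasing. For any fixed $j$ and any $k\geq j$, the term $s(k)$ lies in the range of $\sigma_j$, so $\{f_{s(k)}(t_j)\}_{k\geq j}$ is a subsequence of the Cauchy sequence $\{f_{\sigma_j(k)}(t_j)\}$ and is therefore Cauchy. Since Cauchyness is a tail property, $\{f_{s(k)}(t_j)\}$ is Cauchy for every $j$, i.e., $f_{s(k)}$ is pointwise Cauchy on $T$.

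For part (ii), the proof is verbatim the same, with ``convergent'' in place of ``Cauchy'' at every step: at stage $j$, sequential compactness of $K$ yields a further subsequence whose value at $t_j$ converges in $K$, and subsequences of convergent sequences remain convergent to the same limit. The diagonal $f_{s(k)}$ then converges pointwise on $T$.

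The only subtle point is the bookkeeping needed to see that the diagonal subsequence inherits the Cauchy/convergent property at each $t_j$; the key observation is that $\sigma_k$ is a subsequence of $\sigma_j$ for $k\geq j$, which is exactly what makes the diagonal work. No other obstacle arises, since sequential (pre)compactness of $K$ is applied only one point of $T$ at a time, and countability of $T$ ensures the iterative extraction terminates in a well-defined diagonal.
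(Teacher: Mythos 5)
Your proposal is correct and is essentially the same Cantor diagonal argument as the paper's proof: iteratively extract nested subsequences handling one point $t_j$ at a time, then take the diagonal, noting that the diagonal is eventually a subsequence of each stage so Cauchyness/convergence at each $t_j$ is inherited as a tail property. The only difference is notational (you compose the relative index maps into $\sigma_j$, while the paper writes the nested subsequences directly), so no further comment is needed.
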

\begin{proof}
We prove (i) only, noting that the proof of (ii) is similar. Since $K$ is sequentially precompact, for each $t\in T$, the sequence $\{f_k(t)\}$ has a Cauchy subsequence. Consider an enumeration $T=\{t_1,t_2,\cdots\}$. Then $\left\{f_k(t_1)\right\}$ has a Cauchy subsequence $\left\{f_{s_1(k)}(t_1)\right\}$, i.e., there exists a subsequence $S_1=\left\{f_{s_1(k)}\right\}\subset\{f_k\}$ such that $\left\{f_{s_1(k)}(t_1)\right\}$ is Cauchy. Similarly, because $\left\{f_{s_1(k)}(t_2)\right\}$ has a Cauchy subsequence, there exists a further subsequence $S_2=\left\{f_{s_2(k)}\right\}\subset \left\{f_{s_1(k)}\right\}\subset\{f_k\}$ such that $f_{s_2(k)}(t_2)$ is Cauchy. Continuing this way, we get subsequences $S_1\supset S_2\supset S_3\supset\cdots$ of $\{f_k\}$, where $S_i=\{f_{s_i(k)}\}$ converges pointwise on $\{t_1,\cdots,t_i\}$.
Therefore the diagonal sequence ~$S=\left\{f_{s_k(k)}\right\}$ of $\{f_k\}$ converges pointwise on $T$. Define $f_{s(k)}:=f_{s_k(k)}$.
\end{proof}


\begin{thm}[\textcolor{blue}{Components of a Lipschitz path in $K(X)$}]\label{QGeodComp}
Let $X$ be a metric space, $L\geq 0$, and $\mathcal{J}\subset K(X)$. If $\gamma:[0,1]\rightarrow \mathcal{J}$ is an $L$-Lipschitz path in $\mathcal{J}$, then every $a\in \gamma(0)$ is connected to some $b\in \gamma(1)$ by an $L$-Lipschitz path $\gamma_{(a,b)}:[0,1]\rightarrow X$ such that $\gamma_{(a,b)}(t)\in\gamma(t)$, for all $t\in[0,1]$.
\end{thm}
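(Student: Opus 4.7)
My plan is to construct $\gamma_{(a,b)}$ as the uniformly continuous extension of a pointwise limit of greedy dyadic selections. First I would observe that the image $\gamma([0,1])\subset\mathcal{J}\subset K(X)$ is compact, so by Lemma \ref{UnionLmm}(ii) the union $K_0:=\bigcup_{t\in[0,1]}\gamma(t)$ is compact (hence complete and sequentially compact) in $X$. Let $D_n:=\{k/2^n:0\leq k\leq 2^n\}$ and $D:=\bigcup_{n}D_n$, a countable dense subset of $[0,1]$. Also, since every $\gamma(t)\in K(X)$ is already closed in $\widetilde{X}$ (so $\widetilde{\gamma(t)}=\gamma(t)$), Remark \ref{CautionRmk}(i) supplies, for any $A,B\in K(X)$ and any $x\in A$, a genuine point $y\in B$ with $d(x,y)\leq d_H(A,B)$.

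For each $n\geq 0$, I would build greedy approximants $a_0^n:=a$ and, inductively, $a_k^n\in\gamma(k/2^n)$ realizing $\dist(a_{k-1}^n,\gamma(k/2^n))$; by the observation above this gives $d(a_{k-1}^n,a_k^n)\leq d_H(\gamma((k-1)/2^n),\gamma(k/2^n))\leq L\cdot 2^{-n}$. Define $f_n:D\to K_0$ by $f_n(t):=a_{\lfloor t\cdot 2^n\rfloor}^n$. Two facts are then immediate: (a) if $s\in D_N$ and $n\geq N$, then $s\cdot 2^n\in\mathbb{Z}$, so $f_n(s)=a_{s\cdot 2^n}^n\in\gamma(s)$; (b) if $s,t\in D_N$ and $n\geq N$, a telescoping triangle inequality along the consecutive $a_k^n$'s between $s\cdot 2^n$ and $t\cdot 2^n$ yields $d(f_n(s),f_n(t))\leq L|s-t|$.

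By Lemma \ref{PWCCLmm}(ii), applied to the sequence $\{f_n\}$ in the sequentially compact space $K_0$ and the countable set $D$, I would extract a subsequence $f_{n_j}$ converging pointwise on $D$ to some $g:D\to K_0$. Closedness of each $\gamma(s)$ together with (a) gives $g(s)\in\gamma(s)$ for every $s\in D$ (and $g(0)=a$), while passing to the limit in (b) preserves $d(g(s),g(t))\leq L|s-t|$. Uniform continuity of $g$ on the dense set $D$ and completeness of $K_0$ then yield a unique $L$-Lipschitz extension $\gamma_{(a,b)}:[0,1]\to K_0\subset X$. To verify $\gamma_{(a,b)}(t)\in\gamma(t)$ for $t\notin D$, I would pick $t_k\in D$ with $t_k\to t$, use $\gamma_{(a,b)}(t_k)\in\gamma(t_k)$ together with $d_H(\gamma(t_k),\gamma(t))\to 0$, and invoke compactness of $\gamma(t)$ together with closedness to conclude $\gamma_{(a,b)}(t)\in\gamma(t)$. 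Set $b:=\gamma_{(a,b)}(1)\in\gamma(1)$.

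The main obstacle is that no canonical continuous selection $t\mapsto a(t)\in\gamma(t)$ need exist: different greedy runs on different partitions may wander in incompatible directions, so I cannot hope for a direct inductive construction on successively refined dyadic partitions. This is circumvented by doing all greedy runs independently on the separate scales $D_n$, using the compactness of the ambient envelope $K_0$ (Lemma \ref{UnionLmm}(ii)) together with the diagonal extraction of Lemma \ref{PWCCLmm}(ii) to pass to a limit, and exploiting the stabilization property (a) so that the Lipschitz bounds in (b) transfer intact to the limit $g$ and then, by extension, to all of $[0,1]$.
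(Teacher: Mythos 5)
Your proposal is correct and follows essentially the same route as the paper's proof: greedy $L$-Lipschitz selections on dyadic partitions (using compactness of each $\gamma(t)$ to realize the Hausdorff distance), compactness of $\bigcup_t\gamma(t)$ via Lemma \ref{UnionLmm}(ii), a pointwise-convergent subsequence on the countable dense set $D$ via Lemma \ref{PWCCLmm}, and extension of the Lipschitz limit from $D$ to $[0,1]$ with the density argument for $\gamma_{(a,b)}(t)\in\gamma(t)$. The only cosmetic difference is your floor-function extension of each finite-scale selection to all of $D$ (with the stabilization observation (a)), where the paper instead extends arbitrarily subject to $f_k(t)\in\gamma(t)$; both work equally well.
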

\begin{proof}
Let $\gamma:[0,1]\rightarrow \mathcal{J}$ be an $L$-Lipschitz path from $A$ to $B$. Then $\gamma(0)=A$, $\gamma(1)=B$, ~$\gamma(t)\in\mathcal{J}$, for all $t\in[0,1]$, ~and
\[
d_H(\gamma(t),\gamma(t'))=\max\left\{\sup_{u\in\gamma(t)}\inf_{u'\in\gamma(t')}d(u,u'),\sup_{u'\in\gamma(t')}\inf_{u\in\gamma(t)}d(u,u')\right\}\leq{L}|t-t'|,
\]
for all $t,t'\in[0,1]$. For fixed $t,t'\in[0,1]$, this equation says for every $u\in\gamma(t)$ there exists $u'\in \gamma(t')$ such that
\begin{align}
\label{QGeodCompEq1} d(u,u')\leq d_H(\gamma(t),\gamma(t'))\leq {L}|t-t'|,~~~~\textrm{and vice versa}.
\end{align}
Let $D_k:=\{0=t_0<t_1<\cdots<t_k=1\}$, $k\geq 1$, be partitions of $[0,1]$ such that $D_k\subset D_{k+1}$ and $\bigcup_{k=1}^\infty D_k$ is dense in $[0,1]$ (e.g., $D_k=\{l/2^k:0\leq l\leq 2^k\}$). Fix $k\geq 1$. Then for each $a\in A$, we can define a map $g_k:D_k\rightarrow X$ as follows. Let $g_k(t_0)=g_k(0):=a\in A=\gamma(0)$. Next, using (\ref{QGeodCompEq1}), pick $g_k(t_1)\in\gamma(t_1)$ such that $d(g_k(t_0),g_k(t_1))\leq {L}|t_0-t_1|$. Inductively, given $g_k(t_i)\in\gamma(t_i)$, pick $g_k(t_{i+1})\in\gamma(t_{i+1})$ such that $d(g_k(t_i),g_k(t_{i+1}))\leq{L}|t_i-t_{i+1}|$. This gives an ${L}$-Lipschitz map $g_k:D_k\rightarrow X$ from $a\in A$ to $b_k=g_k(1)\in B$.

Consider the dense set $D:=\bigcup_{k=1}^\infty D_k$ in $[0,1]$. For each $k$, let $f_k:D\rightarrow X$ be any extension of $g_k:D_k\rightarrow X$ such that $f_k(t)\in\gamma(t)$, for all $t\in D$. Then $f_k(D)\subset K:=\bigcup_{t\in[0,1]}\gamma(t)$. Since $K\subset X$ is compact (Lemma \ref{UnionLmm}(ii)) and $D$ is countable, it follows by Lemma \ref{PWCCLmm} that $\{f_k\}$ has a pointwise convergent subsequence $\left\{f_{s(k)}\right\}$, where we know $f_{s(k)}$ is ${L}$-Lipschitz on $D_{s(k)}$. Let ${f}_{s(k)}\rightarrow f$ pointwise in ${X}$. Then $f:D\rightarrow {X}$ is ${L}$-Lipschitz. Also, $f(t)\in{\gamma(t)}$, for all $t\in D$, because
\begin{align}
&\dist(f(t),\gamma(t))\leq d\left(f(t),{f}_{s(k)}(t)\right)+\dist\left({f}_{s(k)}(t),\gamma(t)\right)=d\left(f(t),{f}_{s(k)}(t)\right)\rightarrow0,~~~~\textrm{for all}~~t\in D.\nonumber
\end{align}
Since $f$ is ${L}$-Lipschitz, and $D$ is dense in $[0,1]$, $f$ extends to an ${L}$-Lipschitz map $c:[0,1]\rightarrow {X}$. It remains to show that $c(t)\in{\gamma(t)}$, for all $t\in[0,1]$.

Fix $t\in[0,1]$. Since $D$ is dense in $[0,1]$, pick $t_j\in D$ such that $t_j\rightarrow t$. Then
\begin{align}
\dist(c(t),\gamma(t))&\leq d(c(t),c(t_j))+\dist(c(t_j),\gamma(t))=d(c(t),c(t_j))+\dist(f(t_j),\gamma(t))\nonumber\\
&\leq d(c(t),c(t_j))+d_H\big(\gamma(t_j),\gamma(t)\big)\rightarrow 0,\nonumber
\end{align}
from which it follows that $c(t)\in{\gamma(t)}$, for all $t\in [0,1]$.
\end{proof}

\begin{rmk}
In Theorem \ref{QGeodComp} we have mentioned ``components of a Lipschitz path in $K(X)$''. To make this precise, we will show in Theorem \ref{QGeodEquiv1} that the Lipschitz paths in $X$ obtained using Theorem \ref{QGeodComp} can be used to give a pointwise expression for Lipschitz paths in a stable covering subspace $\mathcal{J}\subset K(X)$.

Theorem \ref{QGeodComp} is closely related to the \emph{Lipschitz selection problem} studied in \cite{castaing1967}, \cite[Theorem 2]{hermes1971}, \cite[Theorem 1 and Question 2]{Slezak1987}, \cite[Theorem 3]{BelovChist2000}, \cite{Chist2004}, and the references therein.
\end{rmk}

\begin{dfn}[\textcolor{blue}{Densely complete relation, Subset connectivity, Weak subset connectivity}]\label{DenseCompl}~\\~
Let $X$ be a space and $A,B\subset X$. A relation $R\subset A\times B$ is \textbf{densely complete} if its domain $A_1:=\{a\in A:|(\{a\}\times B)\cap R|\geq 1\}$ is dense in $A$ and its \textbf{image} (or \textbf{range}) $B_1:=\{b\in B:|(A\times\{b\})\cap R|\geq 1\}$ is dense in $B$.

Let $X$ be a metric space, $\mathcal{J}\subset BCl(X)$, and $A,B\in \mathcal{J}$. We say $A$ is \textbf{$L$-Lipschitz} \textbf{$X$-connected} to $B$ in $\mathcal{J}$ if: there exists (i) a family $\left\{\gamma_r:[0,1]\rightarrow X\right\}_{r\in\Gamma}$ of $L$-Lipschitz paths in $X$, where $\gamma_r$ is a path from $\gamma_r(0)\in A$ to $\gamma_r(1)\in B$, such that (ii)  $R:=\{(\gamma_r(0),\gamma_r(1)):r\in\Gamma\}\subset A\times B$ is a densely complete relation and (iii) $cl_X{\left\{\gamma_r(t):r\in \Gamma\right\}}\in\mathcal{J}$, for each $t\in[0,1]$.

Similarly, we say $A$ is \textbf{$L$-Lipschitz} \textbf{weakly} \textbf{$X$-connected} to $B$ in $\mathcal{J}$ if: there exists (i) a family $\big\{\gamma_r:[0,1]\rightarrow \widetilde{X}\big\}_{r\in\Gamma}$ of $L$-Lipschitz paths in $\widetilde{X}$, where $\gamma_r$ is a path from $\gamma_r(0)\in \widetilde{A}$ to $\gamma_r(1)\in \widetilde{B}$, such that (ii) $R:=\{(\gamma_r(0),\gamma_r(1)):r\in\Gamma\}\subset \widetilde{A}\times \widetilde{B}$ is a densely complete relation and (iii) $cl_{\widetilde{X}}{\left\{\gamma_r(t):r\in \Gamma\right\}}\cap X\in\mathcal{J}$, for each $t\in[0,1]$.

\end{dfn}

Note that $A$ is \textbf{$L$-Lipschitz} \textbf{$X$-connected} to $B$ in $\mathcal{J}$ if and only if there exists a family $\left\{\gamma_r:[0,1]\rightarrow X\right\}_{r\in\Gamma}$ of $L$-Lipschitz paths in $X$ giving a densely complete relation $R:=\{(\gamma_r(0),\gamma_r(1)):r\in\Gamma\}\subset A\times B$ and an $L$-Lipschitz path from $A$ to $B$ by the map
\[
\gamma:[0,1]\rightarrow \mathcal{J},~~t\mapsto cl_X{\left\{\gamma_r(t):r\in \Gamma\right\}}.
\]
Similarly, by Lemma \ref{QGeodSuff} (later), $A$ is \textbf{$L$-Lipschitz} \textbf{weakly} \textbf{$X$-connected} to $B$ in $\mathcal{J}$ if and only if there exists a family $\big\{\gamma_r:[0,1]\rightarrow \widetilde{X}\big\}_{r\in\Gamma}$ of $L$-Lipschitz paths in $\widetilde{X}$ giving a densely complete relation $R:=\{(\gamma_r(0),\gamma_r(1)):r\in\Gamma\}\textcolor{red}{}\subset \widetilde{A}\times \widetilde{B}$ and an $L$-Lipschitz path from $A$ to $B$ by the map
\begin{align}
\label{WeakConnMap}\gamma:[0,1]\rightarrow \mathcal{J},~~t\mapsto cl_{\widetilde{X}}{\left\{\gamma_r(t):r\in \Gamma\right\}}\cap X.
\end{align}

\begin{rmk}[\textcolor{blue}{Approximation of paths and closedness of $PCl(X)$}]\label{CEx3Rmk}~\\~
Suppose that $A,B\in BCl(X)$ can be written as (strictly) increasing unions $A=\bigcup^{\uparrow}A_n$ and $B=\bigcup^{\uparrow}B_n$ for $A_n,B_n\in PCl(X)$. Further suppose $d_H(A_n,A)\rightarrow 0$ and $d_H(B_n,B)\rightarrow 0$, where
    \[
    \textstyle d_H(A_n,A)=\max\{\sup_{a'\in A_n}\dist(a',A),\sup_{a\in A}\dist(A_n,a)\}=\sup_{a\in A}\dist(A_n,a),
    \]
the further supposition being essential because $A_n\uparrow A$ does not imply $d_H(A_n,A)\rightarrow 0$ (which can be seen using the counterexample where $X$ is $\mathbb{N}$ with the discrete metric $\delta(a, b) = 1$ whenever $a\neq b$, $A = X$, and $A_n =\{1, 2, . . . , n\}$).

Then we may consider a geodesic {\small $\gamma_n:[0,1]\rightarrow PCl(X)\subset BCl(X)$} between $A_n$ and $B_n$ to approximate a geodesic between $A$ and $B$. Note that $A_n\cap B_{n'}\subset A_{\max(n,n')}\cap B_{\max(n,n')}$ and {\small $|d_H(A,B)-d_H(A_n,B_n)|\leq d_H(A,A_n)+d_H(B,B_n)\rightarrow 0$}, and so
    \[
    \textstyle A\cap B=\bigcup_{n,n'}A_n\cap B_{n'}=\bigcup_nA_n\cap B_n~~\textrm{and}~~d_H(A,B)=\lim_n d_H(A_n,B_n).
    \]
However, note that by the properties of $d_H$, $PCl(X)$ is closed in $BCl(X)$, and so $A,B\in PCl(X)$.
\end{rmk}

\begin{dfn}[\textcolor{blue}{Locally Lipschitz map, Lipschitz neighborhood of a point}]\label{LocLipMap}
Fix $c\geq 0$. A map $f:X\rightarrow Y$ is \textbf{locally $c$-Lipschitz} if for each $x\in X$ there exists a neighborhood $N_{r_x}(x)$, $r_x=r_{x,f}>0$, such that
\[
d(f(x),f(z))\leq cd(x,z),~~~~\textrm{for all}~~~~z\in N_{r_x}(x).
\]
We will call $N_{r_x}(x)$ a \textbf{Lipschitz neighborhood} of $x$ with respect to $f$.
\end{dfn}

\begin{lmm}[\textcolor{blue}{Gluing lemma}]\label{GluLmm}
Fix $\lambda\geq 1$. Let $X$ be a $\lambda$-quasiconvex space and $Y$ a metric space. If a map $f:X\rightarrow Y$ is locally $c$-Lipschitz, then it is $\lambda c$-Lipschitz.
\end{lmm}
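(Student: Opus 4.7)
The plan is to establish $d(f(x),f(y))\leq\lambda c\, d(x,y)$ for arbitrary $x,y\in X$. By $\lambda$-quasiconvexity of $X$, choose a $\lambda$-quasigeodesic $\gamma:[0,1]\to X$ from $x$ to $y$; by Definition \ref{QsiGeoDfn} this $\gamma$ is $\lambda d(x,y)$-Lipschitz. The composition $g:=f\circ\gamma:[0,1]\to Y$ will inherit a local Lipschitz bound as follows. At each $t\in[0,1]$ the Lipschitz neighborhood $N_{r_{\gamma(t)}}(\gamma(t))$ from Definition \ref{LocLipMap} is open, so continuity of $\gamma$ yields $\delta_t>0$ with $\gamma\big((t-\delta_t,t+\delta_t)\cap[0,1]\big)\subset N_{r_{\gamma(t)}}(\gamma(t))$. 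For $s$ in this subinterval,
\[
d(g(s),g(t))\leq c\,d(\gamma(s),\gamma(t))\leq c\lambda d(x,y)\,|s-t|.
\]

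The main step is to promote this local estimate on $[0,1]$ to a global one. I would use a connectedness/continuity argument. Set
\[
E:=\{t\in[0,1]:d(g(0),g(t))\leq c\lambda d(x,y)\,t\}.
\]
Continuity of $g$ makes $E$ closed, and $0\in E$ trivially. For any $t\in E$ with $t<1$, applying the local estimate above from $t$ to $s\in[t,t+\delta_t)\cap[0,1]$ gives
\[
d(g(0),g(s))\leq d(g(0),g(t))+c\lambda d(x,y)(s-t)\leq c\lambda d(x,y)\,s,
\]
so $[t,t+\delta_t)\cap[0,1]\subset E$. Setting $T:=\sup E$, closedness forces $T\in E$; the propagation property then rules out $T<1$, so $T=1$ and hence $1\in E$. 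This is precisely $d(f(x),f(y))=d(g(0),g(1))\leq c\lambda d(x,y)$.

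I expect the composition step $d(g(s),g(t))\leq c\lambda d(x,y)|s-t|$ on each Lipschitz neighborhood to be routine, since it is just the chaining of two Lipschitz estimates together with continuity of $\gamma$. The more delicate part is the global promotion on $[0,1]$; an alternative is a Lebesgue number argument (cover $[0,1]$ by the open preimages $\gamma^{-1}(N_{r_{\gamma(t)}}(\gamma(t)))$, pick a partition of sub-Lebesgue mesh, and telescope the local estimates), but I prefer the supremum argument because it handles the endpoint $t=1$ cleanly via closedness of $E$ and sidesteps the nuisance of whether the center $\gamma(t)$ of a Lipschitz neighborhood lies in the partition interval in question.
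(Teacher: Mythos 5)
Your proposal is correct and takes essentially the same approach as the paper: compose $f$ with a $\lambda$-quasigeodesic $\gamma$ and globalize the resulting local Lipschitz estimate for $f\circ\gamma$ over the compact interval $[0,1]$. The only difference is in the globalization step — the paper uses the finite-partition/Lebesgue-number argument you mention as an alternative, whereas you use a supremum (continuous induction) argument on the set $E$; both are valid, and yours proves only the endpoint estimate $d(g(0),g(1))\leq \lambda c\,d(x,y)$, which is all the lemma requires.
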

\begin{proof}
Fix $x,x'\in X$. Let $\gamma:[0,1]\rightarrow X$ be a $\lambda$-quasigeodesic from $x$ to $x'$. Then $f\circ\gamma:[0,1]\stackrel{\gamma}{\longrightarrow}X\stackrel{f}{\longrightarrow} Y$ is locally $\lambda cd(x,x')$-Lipschitz on $[0,1]$. Indeed, for any $t\in[0,1]$ there is $r_{\gamma(t)}>0$ such that
\[
d\big(f(\gamma(t)),f(z)\big)\leq cd\big(\gamma(t),z\big),~~~~\textrm{for all}~~~~z\in N_{r_{\gamma(t)}}\big(\gamma(t)\big),
\]
and so the neighborhood $U=\gamma^{-1}\big(N_{r_{\gamma(t)}}\big(\gamma(t)\big)\big)$ of $t$ in $[0,1]$ satisfies
\[
d\big(f(\gamma(t)),f(\gamma(s))\big)\leq cd\big(\gamma(t),\gamma(s)\big)\leq \lambda cd(x,x')|t-s|,~~~~\textrm{for all}~~s\in U.
\]
Since $[0,1]$ is compact, for any $t,t'\in [0,1]$ we can choose a  partition $P=\{t=t_0<t_1<\cdots<t_k=t'\}$ such that Lipschitz neighborhoods of the $t_i$'s cover $[t,t']$. So, for each $i\in\{1,...,k\}$ there is $s_i\in[t_{i-1},t_i]$ satisfying $d\big(f(\gamma(t_{i-1})),f(\gamma(s_i))\big)\leq\lambda cd(x,x')|t_{i-1}-s_i|$ and $d\big(f(\gamma(s_i)),f(\gamma(t_i))\big)\leq\lambda cd(x,x')|s_i-t_i|$. By the triangle inequality,
$d\big(f(\gamma(t_{i-1})),f(\gamma(t_i))\big)\leq\lambda cd(x,x')(|t_{i-1}-s_i|+|s_i-t_i|)=\lambda cd(x,x')|t_{i-1}-t_i|$, and so
\[
\textstyle d\big(f(\gamma(t)),f(\gamma(t'))\big)\leq \sum_{i=1}^kd\big(f(\gamma(t_{i-1})),f(\gamma(t_i))\big)\leq \lambda cd(x,x')\sum_{i=1}^k|t_{i-1}-t_i|=\lambda cd(x,x')|t-t'|.
\]
This shows $f\circ\gamma$ is $\lambda cd(x,x')$-Lipschitz. Hence ~$d\big(f(x),f(x')\big)\leq \lambda cd(x,x')$.
\end{proof}

\begin{lmm}\label{FactznProp}
For any $A,B,C,D\in BCl(X)$, we have {\small $~d_H(A\cup B,C\cup D)\leq \max\big(d_H(A,C),d_H(B,D)\big)$}.
\end{lmm}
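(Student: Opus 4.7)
The plan is to prove the inequality directly from the definition
\[
d_H(E,F) = \max\Bigl\{\sup_{e\in E}\dist(e,F),\; \sup_{f\in F}\dist(f,E)\Bigr\}
\]
by estimating each of the two suprema that define $d_H(A\cup B, C\cup D)$ separately, and then taking the maximum. Let $M := \max\bigl(d_H(A,C), d_H(B,D)\bigr)$; the goal is to show $d_H(A\cup B, C\cup D)\leq M$.

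First I would handle $\sup_{x\in A\cup B}\dist(x, C\cup D)$. The key elementary observation is that enlarging the target set only decreases the distance: for any point $x$, $\dist(x, C\cup D)\leq \dist(x, C)$ and $\dist(x, C\cup D)\leq \dist(x, D)$. Hence, if $x\in A$, then $\dist(x, C\cup D)\leq \dist(x, C)\leq \sup_{a\in A}\dist(a,C)\leq d_H(A,C)\leq M$; and if $x\in B$, then analogously $\dist(x, C\cup D)\leq \dist(x, D)\leq d_H(B,D)\leq M$. Taking the supremum over $x\in A\cup B$ yields $\sup_{x\in A\cup B}\dist(x, C\cup D)\leq M$.

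By the completely symmetric argument with the roles of $(A,B)$ and $(C,D)$ swapped, I would obtain $\sup_{y\in C\cup D}\dist(y, A\cup B)\leq M$. Taking the maximum of these two bounds gives $d_H(A\cup B, C\cup D)\leq M$, which is the claim. There is no real obstacle here: the result is purely formal and follows from monotonicity of $\dist(\,\cdot\,,\,\cdot\,)$ in its second argument together with the definition of Hausdorff distance; the only thing to be slightly careful about is that $A\cup B$ and $C\cup D$ are still in $BCl(X)$ so that $d_H$ is finite on them, which is immediate since the (finite) union of nonempty bounded closed sets is nonempty, bounded, and closed.
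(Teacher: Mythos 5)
Your proof is correct and complete; it is the standard direct argument from the definition of $d_H$ together with the monotonicity $\dist(x,C\cup D)\leq\min\{\dist(x,C),\dist(x,D)\}$. The paper states Lemma \ref{FactznProp} without proof, so there is nothing to compare against, but your argument is exactly the one the author evidently has in mind, and your closing remark that $A\cup B,\,C\cup D\in BCl(X)$ is a sensible (if routine) check.
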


\begin{rmk}\label{IndexSetrmk}
The index set $\Gamma$ in Theorem \ref{QGeodEquiv1} or Theorem \ref{RepThmPCl} need not be (cardinality equivalent to) a relation $\Gamma\subset A\times B$ in general (although this will be the case for finite subset spaces in Theorem \ref{QGeodExistFS}). To see this, consider the counterexample with $X=\mathbb{R}^2$ and the geodesic $\gamma:[0,1]\rightarrow K(X),~t\mapsto\{t\}\times[0,\min(t,1-t)]$ from $A=\{(0,0)\}$ to $B=\{(1,0)\}$. The only nonempty relation $R\subset A\times B$ contains a single element.
\end{rmk}

\begin{thm}[\textcolor{blue}{Representation of Lipschitz paths in $K(X)$}]\label{QGeodEquiv1}
Fix $L\geq0$. Let $X$ be a metric space, $\mathcal{J}\subset K(X)$ a stable covering subspace, and $A,B\in\mathcal{J}$. Suppose $\gamma:[0,1]\rightarrow \mathcal{J}$ is an $L$-Lipschitz path from $A$ to $B$. Then there exists a maximal collection $\{\gamma_r:[0,1]\rightarrow X\}_{r\in \Gamma}$ of $L$-Lipschitz paths in $X$, where $\gamma_r$ is a path from $\gamma_r(0)\in A$ to $\gamma_r(1)\in B$, such that (i) $R:=\{(\gamma_r(0),\gamma_r(1)):r\in\Gamma\}\subset A\times B$ is a densely complete relation and (ii) $\gamma(t)=cl_X\{\gamma_r(t):r\in \Gamma\}$, for all $t\in[0,1]$.
\end{thm}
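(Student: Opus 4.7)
My plan is to take $\Gamma$ to be the tautologically maximal collection
\[
\Gamma := \{\sigma:[0,1]\to X \mid \sigma~\text{is $L$-Lipschitz},~\sigma(0)\in A,~\sigma(1)\in B,~\sigma(t)\in\gamma(t)~\forall\, t\in[0,1]\};
\]
maximality then holds by construction, and it remains to verify (i) and (ii). Both statements reduce to demonstrating that $\Gamma$ is rich enough, which is powered entirely by Theorem \ref{QGeodComp}, applied to $\gamma$ itself and to two of its restrictions.

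For (i), Theorem \ref{QGeodComp} applied directly to $\gamma$ produces, for each $a\in A=\gamma(0)$, some $\gamma_{(a,b)}\in\Gamma$ starting at $a$, so the domain of $R$ equals $A$. Applying Theorem \ref{QGeodComp} to the reversed $L$-Lipschitz path $\widehat{\gamma}(t):=\gamma(1-t)$ and then reversing each resulting path in time yields, for each $b\in B$, an element of $\Gamma$ ending at $b$. Hence $R$ is complete in $A\times B$, and in particular densely complete.

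For (ii), the inclusion $cl_X\{\sigma(t):\sigma\in\Gamma\}\subset\gamma(t)$ is immediate: each $\sigma(t)\in\gamma(t)$ and $\gamma(t)$ is closed. I will in fact show the stronger reverse inclusion $\gamma(t)\subset\{\sigma(t):\sigma\in\Gamma\}$ (no closure needed). The endpoint cases $t\in\{0,1\}$ follow from (i). For $t_0\in(0,1)$ and $c\in\gamma(t_0)$, apply Theorem \ref{QGeodComp} to $\gamma|_{[t_0,1]}$ starting from $c\in\gamma(t_0)$ to obtain an $L$-Lipschitz $\sigma_2:[t_0,1]\to X$ from $c$ to some $b\in B$ with $\sigma_2(s)\in\gamma(s)$; similarly apply the theorem to the time-reversal of $\gamma|_{[0,t_0]}$ and reverse the result to obtain an $L$-Lipschitz $\sigma_1:[0,t_0]\to X$ from some $a\in A$ to $c$ with $\sigma_1(s)\in\gamma(s)$. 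Since $\sigma_1(t_0)=\sigma_2(t_0)=c$, their concatenation $\sigma:[0,1]\to X$ is $L$-Lipschitz (the triangle inequality handles pairs straddling $t_0$), belongs to $\Gamma$, and satisfies $\sigma(t_0)=c$.

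The only real check — the expected main subtlety — is that Theorem \ref{QGeodComp} applies to restrictions $\gamma|_{[a,b]}$ and to any prescribed starting point $c$ in the initial fiber $\gamma(a)$, rather than only to paths parametrized on $[0,1]$ starting from every point of $\gamma(0)$. This is not a genuine obstacle: the proof of Theorem \ref{QGeodComp} — building a dyadic-partition selection from a chosen starting point, extending to a dense countable subset, and passing to a pointwise convergent subsequence via compactness of $\bigcup_{t\in[a,b]}\gamma(t)$ (Lemma \ref{UnionLmm}) and Lemma \ref{PWCCLmm} — transfers verbatim to any compact subinterval with any specified starting point in the initial fiber.
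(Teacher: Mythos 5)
Your proof is correct, and it is organized quite differently from the paper's. The paper builds the family by Zorn's lemma: it introduces a poset $\mathcal{S}$ of pairs $(\eta,\Delta)$ with $\eta(t)=cl_X\{\eta_r(t):r\in\Delta\}\subset\gamma(t)$, checks that chains have upper bounds (Claim 1), extracts a maximal element, and then derives a contradiction from $\eta'(s)\neq\gamma(s)$ by threading an $L$-Lipschitz selection through a missing point $x_s$ --- invoking Theorem \ref{QGeodComp} together with the gluing lemma (Lemma \ref{GluLmm}) for that last step. You short-circuit all of this by taking $\Gamma$ to be the set of \emph{all} $L$-Lipschitz selections of $\gamma$, which is maximal by fiat, and then run the ``thread a selection through an arbitrary $c\in\gamma(t_0)$'' construction directly as the proof of (ii); the concatenation at the single junction $t_0$ is handled by the triangle inequality, which is all that is needed (the gluing lemma is overkill there). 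The technical engine is therefore identical --- Theorem \ref{QGeodComp} applied to $\gamma|_{[t_0,1]}$ with prescribed initial point $c$ and to the time-reversal of $\gamma|_{[0,t_0]}$ --- and you correctly isolate the one point requiring care, namely that the proof of Theorem \ref{QGeodComp} transfers to arbitrary compact subintervals and arbitrary starting points in the initial fiber (the paper relies on this silently in its contradiction step as well; note that restriction preserves the constant $L$, so no rescaling issue arises). What your route buys: no appeal to the axiom of choice in the form of Zorn's lemma, a strictly stronger conclusion in (i) ($R$ is complete, not merely densely complete), and exact equality $\gamma(t)=\{\sigma(t):\sigma\in\Gamma\}$ in (ii) with no closure needed. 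What the paper's route buys is a reusable template ($\mathcal{S}$, Claim 1) that it recycles in Proposition \ref{QGeodEquiv2}, but for the theorem as stated your argument is complete and cleaner.
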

\begin{proof}
Let $\mathcal{S}$ be the set of pairs $(\eta,\Delta)$, where $\Delta$ is a set and $\eta:[0,1]\rightarrow\mathcal{J}$ is an ${L}$-lipschitz path satisfying $\eta(t)=cl_X\{\eta_r(t):r\in \Delta\}\subset\gamma(t)$, for ${L}$-Lipschitz paths $\eta_r:[0,1]\rightarrow X$, and $R:=\{(\eta_r(0),\eta_r(1)):r\in\Delta\}\subset \eta(0)\times\eta(1)$ is a densely complete relation. Consider $\mathcal{S}$ to be the poset with ordering $(\eta^1,\Delta^1)\leq (\eta^2,\Delta^2)$ if $\eta^1(t)\subset\eta^2(t)$, for all $t\in[0,1]$, and $\Delta^1\subset \Delta^2$. Then $\mathcal{S}$ is nonempty by Theorem \ref{QGeodComp}. Let $\{(\eta^\alpha,\Delta^\alpha)\}$ be a \emph{chain} in $\mathcal{S}$ (i.e., a \emph{linearly ordered subset} of $\mathcal{S}$).

\begin{claim}\label{ConstrClaim}
The map $\eta:[0,1]\rightarrow \mathcal{J},~t\mapsto cl_X\big(\bigcup_\alpha\eta^\alpha(t)\big)$ and $\Delta:=\bigcup_\alpha\Delta^\alpha$ give an element $(\eta,\Delta)\in\mathcal{S}$.

\noindent\emph{Proof of Claim \ref{ConstrClaim}:} Observe that $\eta(t)=cl_X\big[\bigcup_\alpha\eta^\alpha(t)\big]$=$cl_X\big[\bigcup_\alpha cl_X\{\eta_r^\alpha(t):r\in\Delta^\alpha\}\big]$=$cl_X\big[\bigcup_\alpha \{\eta_r^\alpha(t):r\in\Delta^\alpha\}\big]$=$cl_X\{\eta_r(t):r\in\Delta$=$\bigcup_\alpha\Delta^\alpha\}$, where $\eta_r:[0,1]\rightarrow X$ is given by $\eta_r(t):=\eta_r^\alpha(t)$ if $t\in\Delta^\alpha$. Since each $R^\alpha:=\{(\eta_r^\alpha(0),\eta_r^\alpha(1)):r\in\Delta^\alpha\}\subset \eta^\alpha(0)\times\eta^\alpha(1)$ is densely complete (i.e., $cl_X\{\eta_r^\alpha(0):r\in\Delta^\alpha\}=\eta^\alpha(0)$ and $cl_X\{\eta_r^\alpha(1):r\in\Delta^\alpha\}=\eta^\alpha(1)$), so is
\begin{align*}
 R:=&\textstyle~\{(\eta_r(0),\eta_r(1)):r\in\Gamma=\bigcup_\alpha\Delta^\alpha\}\\
 =&\textstyle~\bigcup_\alpha\{(\eta_r(0),\eta_r(1)):r\in\Delta^\alpha\}
\\
=&\textstyle~\bigcup_\alpha\{(\eta_r^\alpha(0),\eta_r^\alpha(1)):r\in\Delta^\alpha\}\\
=&\textstyle~\bigcup_\alpha R^\alpha\\
\subset &\textstyle~ cl_X\big(\bigcup_\alpha \eta^\alpha(0)\big)\times cl_X\big(\bigcup_\alpha\eta^\alpha(1)\big),
\end{align*}
by the definition of $\eta(t)$. This completes the proof of Claim \ref{ConstrClaim}.
\end{claim}
\noindent The pair $(\eta,\Delta)$ is an upper bound of $\{(\eta^\alpha,\Delta^\alpha)\}$ in $\mathcal{S}$. Therefore, by Zorn's lemma, $\mathcal{S}$ has a maximal element $(\eta',\Delta')$.

Suppose there is $s\in[0,1]$ such that $\eta'(s)\neq \gamma(s)$. Pick $x_s\in\gamma(s)\backslash\eta'(s)$. Using Theorem \ref{QGeodComp} and Lemma \ref{GluLmm}, construct an ${L}$-Lipschitz path $\eta_{(a_s,b_s)}:[0,1]\rightarrow X$, $\eta_{(a_s,b_s)}(t)\in \gamma(t)$, from some $a_s\in A$ through $x_s$ to some $b_s\in B$. Let $\eta''(t):=\eta'(t)\cup \eta_{(a_s,b_s)}(t)$ and $\Delta'':=\Delta'\cup\{(a_s,b_s)\}$. Then $(\eta',\Delta')<(\eta'',\Delta'')\in\mathcal{S}$, which is a contradiction.
\end{proof}

\begin{lmm}[\textcolor{blue}{Sufficient condition for Lipschitz paths in $BCl(X)$}]\label{QGeodSuff}
Fix $L\geq 0$. Let $X$ be a metric space, $\mathcal{J}\subset BCl(X)$, and $A,B\in\mathcal{J}$. Suppose $A$ is $L$-Lipschitz weakly $X$-connected to $B$ in $\mathcal{J}$. Then there exists an $L$-Lipschitz path from $A$ to $B$ in $\mathcal{J}$. In particular, if $L:=\lambda d_H(A,B)$ for some $\lambda\geq 1$ (in the said connectivity), then there exists a $\lambda$-quasigeodesic from $A$ to $B$ in $\mathcal{J}$.

(\textbf{Note}: This result also holds for $\mathcal{J}\subset\mathcal{H}(X;C)$, $C\in Cl(X)$, as in the proofs of \cite[Theorem 2.1 and Corollary 2.2]{kovalevTyson2007}.)
\end{lmm}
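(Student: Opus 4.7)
The plan is to take as candidate the natural map $\gamma(t):=cl_{\widetilde{X}}\{\gamma_r(t):r\in\Gamma\}\cap X$ supplied by the weak $X$-connectedness hypothesis and verify directly that it is an $L$-Lipschitz path from $A$ to $B$ in $\mathcal{J}$. Membership $\gamma(t)\in\mathcal{J}$ for each $t\in[0,1]$ is immediate from condition (iii) of weak $X$-connectedness. For the endpoint identity $\gamma(0)=A$, I would use that $\{\gamma_r(0):r\in\Gamma\}$ equals the domain of $R$, which is dense in $\widetilde{A}$ by dense completeness, so $cl_{\widetilde{X}}\{\gamma_r(0):r\in\Gamma\}=cl_{\widetilde{X}}(\widetilde{A})=\widetilde{A}$; intersecting with $X$ gives $\widetilde{A}\cap X=A$ because $A\in BCl(X)$ is closed in $X$. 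The identity $\gamma(1)=B$ follows identically from density of the range of $R$ in $\widetilde{B}$.

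For the Lipschitz estimate $d_H(\gamma(t),\gamma(t'))\leq L|t-t'|$, I would fix $x\in\gamma(t)$ and aim to show $\dist^X(x,\gamma(t'))\leq L|t-t'|$; the symmetric claim then yields the Hausdorff bound. Since $x\in cl_{\widetilde{X}}\{\gamma_r(t):r\in\Gamma\}$, pick $r_n\in\Gamma$ with $\gamma_{r_n}(t)\to x$ in $\widetilde{X}$. The $L$-Lipschitz property of each $\gamma_{r_n}$ gives $d(\gamma_{r_n}(t),\gamma_{r_n}(t'))\leq L|t-t'|$, and the triangle inequality yields $d(x,\gamma_{r_n}(t'))\leq L|t-t'|+o(1)$; since $\gamma_{r_n}(t')\in\widetilde{\gamma}(t'):=cl_{\widetilde{X}}\{\gamma_r(t'):r\in\Gamma\}$, this already provides the bound $\dist^{\widetilde{X}}(x,\widetilde{\gamma}(t'))\leq L|t-t'|$. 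The remaining step is to convert this bound on distance to $\widetilde{\gamma}(t')$ inside $\widetilde{X}$ into a bound on distance in $X$ to the smaller set $\gamma(t')=\widetilde{\gamma}(t')\cap X$, using the density of $X$ in $\widetilde{X}$ together with the fact, recorded in condition (iii), that $\gamma(t')$ is a nonempty closed bounded subset of $X$.

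The main obstacle is exactly this last conversion step: given an approximant $\gamma_{r_n}(t')\in\widetilde{\gamma}(t')$ that may lie in $\widetilde{X}\setminus X$, one must locate a nearby point of $\gamma(t')\subset X$. When $\gamma_{r_n}(t')$ happens to lie in $X$, it lies in $\widetilde{\gamma}(t')\cap X=\gamma(t')$ directly; otherwise I would exploit the density of $X$ in $\widetilde{X}$ jointly with the nonemptiness of $\widetilde{\gamma}(t')\cap X$ from condition (iii) to produce points of $\gamma(t')$ converging to a point at distance $\leq L|t-t'|$ from $x$, and use that $\gamma(t')$ is closed in $X$ to pass to the limit. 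Combining the symmetric argument at $t'$ gives the full Hausdorff estimate. The \emph{In particular} clause is then immediate: setting $L:=\lambda d_H(A,B)$ (respectively $L:=\ell(\gamma)$) and invoking Corollary \ref{GeodLength} identifies the resulting $L$-Lipschitz path with a $\lambda$-quasigeodesic (respectively a rectifiable path) from $A$ to $B$ in $\mathcal{J}$.
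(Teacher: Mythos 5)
Your route is the same as the paper's: take the candidate $\gamma(t):=cl_{\widetilde{X}}\{\gamma_r(t):r\in\Gamma\}\cap X$, read membership in $\mathcal{J}$ off condition (iii), get the endpoints from dense completeness, and derive the Hausdorff bound from the pointwise estimates $d(\gamma_r(t),\gamma_r(t'))\leq L|t-t'|$. Your endpoint verification (via $cl_{\widetilde{X}}(A_1)=\widetilde{A}$ and $\widetilde{A}\cap X=cl_X(A)=A$) is actually more careful than the paper's, which simply asserts $\gamma(0)=A$ and $\gamma(1)=B$.

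The step you flag as ``the main obstacle'' is, however, a genuine gap, and your proposed fix does not close it. From $\dist^{\widetilde{X}}(x,\widetilde{\gamma}(t'))\leq L|t-t'|$, where $\widetilde{\gamma}(t'):=cl_{\widetilde{X}}\{\gamma_r(t'):r\in\Gamma\}$, you can conclude $\dist^{X}(x,\gamma(t'))\leq L|t-t'|$ only if $\gamma(t')=\widetilde{\gamma}(t')\cap X$ is \emph{dense} in $\widetilde{\gamma}(t')$; density of $X$ in $\widetilde{X}$ plus nonemptiness of $\gamma(t')$ does not give this, since a closed bounded subset of $\widetilde{X}$ can meet $X$ in a nonempty closed set that is nowhere near dense in it. Concretely, take $X=\mathbb{Q}$, $\mathcal{J}=BCl(\mathbb{Q})$, $\Gamma=\{1,2\}$, $\gamma_1\equiv 0$ and $\gamma_2(t)=1+2\min(t,1-t)(\sqrt{2}-1)$: all three conditions of weak $X$-connectedness hold with $A=B=\{0,1\}$ and $R=\{(0,0),(1,1)\}$, yet $\gamma(t)=\{0\}$ for $0<t<1$ while $\gamma(0)=\gamma(1)=\{0,1\}$, so the candidate map is not even continuous. (The lemma's conclusion is not contradicted here because $A=B$, but the argument breaks.) What is actually needed is the additional assumption that $\gamma(t)$ is dense in $cl_{\widetilde{X}}\{\gamma_r(t)\}$ for every $t$, i.e.\ $d_H(\gamma(t),\widetilde{\gamma}(t))=0$, under which your computation goes through verbatim. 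You should know that the paper's own proof has exactly the same hidden gap: it writes $d_H(\gamma(t),\gamma(t'))$ directly as the max of the sup-infs over the families $\{\gamma_r(t)\}$ and $\{\gamma_{r'}(t')\}$, silently identifying $\gamma(t)$ with $cl_{\widetilde{X}}\{\gamma_r(t)\}$. In the one place the lemma is used (Lemma \ref{QGeodExist}, hence Theorem \ref{RepThmPCl}), the family $\{\gamma_r\}$ arises by lifting a given Lipschitz path in $\mathcal{J}$, so $cl_{\widetilde{X}}\{\gamma_r(t)\}=\widetilde{\gamma(t)}$ and the needed density is automatic; the defect is in the standalone statement, not in the downstream applications.
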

\begin{proof}
Consider a family of $L$-Lipschitz paths $\big\{\gamma_r:[0,1]\rightarrow \widetilde{X},~\gamma_r(0)\in \widetilde{A},\gamma_r(1)\in \widetilde{B}\big\}_{r\in \Gamma}$ in $\widetilde{X}$ such that $R:=\{(\gamma_r(0),\gamma_r(1)):r\in\Gamma\}\subset \widetilde{A}\times \widetilde{B}$ is a densely complete relation and $cl_{\widetilde{X}}\left\{\gamma_r(t):r\in \Gamma\right\}\cap X\in\mathcal{J}$, for all $t\in[0,1]$. Then the map $\gamma:[0,1]\rightarrow \mathcal{J}$ given by $\gamma(t):=cl_{\widetilde{X}}\left\{\gamma_r(t):r\in \Gamma\right\}\cap X$ is an $L$-Lipschitz path in $\mathcal{J}$ from $A$ to $B$, since $\gamma(0)=A$, $\gamma(1)=B$, and
\begin{align*}
d_H\big(\gamma(t),\gamma(t')\big)&=\max\left\{\sup_{r\in \Gamma}\inf_{r'\in \Gamma}d\left(\gamma_r(t),\gamma_{r'}(t')\right),\sup_{r'\in \Gamma}\inf_{r\in \Gamma}d\left(\gamma_r(t),\gamma_{r'}(t')\right)\right\}\\
&\leq\sup_{r\in \Gamma}d\left(\gamma_r(t),\gamma_r(t')\right)\leq L|t-t'|,
\end{align*}
for all $t,t'\in[0,1]$.
\end{proof}

The following result (which uses Remark \ref{CautionRmk}) is related to \cite[Corollary 2.2]{kovalevTyson2007} and \cite[Theorem 3.5]{MemoliWan2023}.

\begin{crl}\label{FS_BCl_QConv}
(i) Fix $\sigma\geq 1$ and $\lambda>1$. If $X$ is $\sigma$-quasiconvex, then $BCl(X)$ is $\sigma\lambda$-quasiconvex.
(ii) Fix $\lambda\geq 1$. If $X$ is $\lambda$-quasiconvex, then $FS(X)$ is $\lambda$-quasiconvex. (iii) Fix $\lambda\geq 1$. If $X$ is proper and $\lambda$-quasiconvex, then $BCl(X)=K(X)$ is $\lambda$-quasiconvex.
\end{crl}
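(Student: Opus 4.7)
The plan in all three parts is to apply Lemma~\ref{QGeodSuff}: given $A, B$ in the relevant subset space, I would construct a complete (hence densely complete) relation $R \subset A \times B$, join each pair in $R$ by a quasigeodesic in $X$ using the quasiconvexity hypothesis, and then verify condition~(iii) of Definition~\ref{DenseCompl} for the time-slices. Since the resulting paths lie in $X \subset \widetilde{X}$, condition~(iii) reduces to $cl_X\{\gamma_r(t) : r \in R\} \in \mathcal{J}$.

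For part~(i), I would use Remark~\ref{CautionRmk}(ii): since $\lambda > 1$, for each $a \in A$ there is $b_a \in B$ with $d(a, b_a) \leq \lambda d_H(A, B)$, and symmetrically an $a_b \in A$ for each $b \in B$. This gives a complete relation $R \subset A \times B$ with $d(x, y) \leq \lambda d_H(A, B)$ for all $(x, y) \in R$. By $\sigma$-quasiconvexity of $X$, each such pair is joined by a $\sigma d(x, y)$-Lipschitz path, hence by an $L$-Lipschitz path with $L := \sigma \lambda d_H(A, B)$. Condition~(iii) requires $cl_X\{\gamma_r(t) : r \in R\} \in BCl(X)$; this set is nonempty and closed by construction, and bounded because each $\gamma_r(t)$ lies within $Lt$ of a point of $A$, so the entire time-slice sits inside an $(L + \diam A)$-neighborhood of any fixed $a_0 \in A$. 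Lemma~\ref{QGeodSuff} then yields the desired $\sigma\lambda$-quasigeodesic.

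For parts~(ii) and~(iii), I would use Remark~\ref{CautionRmk}(i) instead. Since $A, B$ are compact (so $\widetilde{A} = A$ and $\widetilde{B} = B$), the infimum in $\dist(a, B)$ is attained, giving $b_a \in B$ with $d(a, b_a) \leq d_H(A, B)$, and symmetrically. Using $\lambda$-quasiconvexity of $X$, connect each pair in the resulting proximal relation $R$ by an $L$-Lipschitz path with $L := \lambda d_H(A, B)$. In case~(ii), $A$ and $B$ are finite, so $R$ is finite and $\{\gamma_r(t) : r \in R\}$ is a finite (hence closed) subset of $X$, placing $cl_X\{\gamma_r(t)\}$ in $FS(X)$. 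In case~(iii), properness of $X$ implies $BCl(X) = K(X)$ and that bounded closed sets are compact, so the boundedness estimate from part~(i) shows $cl_X\{\gamma_r(t)\} \in K(X)$. Applying Lemma~\ref{QGeodSuff} delivers the $\lambda$-quasigeodesic in each case.

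The main technical point is verifying condition~(iii): it is trivial in~(ii) by finiteness of $R$, and in~(i) and~(iii) it reduces to a uniform boundedness estimate for the time-slice, supplemented by properness in~(iii). The asymmetry in hypotheses---$\lambda > 1$ required in~(i) but $\lambda \geq 1$ allowed in~(ii) and~(iii)---reflects the fact that in the (pre)compact setting the Hausdorff distance is realized by actual points, whereas in general $BCl(X)$ one must pay an arbitrarily small multiplicative overhead, captured by $\lambda$ in Remark~\ref{CautionRmk}(ii).
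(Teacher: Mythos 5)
Your proposal is correct and follows essentially the same route as the paper: build the complete relation $R=\{(a,b)\in A\times B: d(a,b)\leq \lambda d_H(A,B)\}$ (resp.\ the proximal one for (ii) and (iii)) via Remark~\ref{CautionRmk}, fill it in with quasigeodesics of $X$, check that the time-slices land in $\mathcal{J}$, and invoke Lemma~\ref{QGeodSuff}. You are somewhat more explicit than the paper in verifying boundedness/compactness of the time-slices, but this is the same argument.
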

\begin{proof}
\noindent(i): Assume $X$ is $\sigma$-quasiconvex and let $A,B\in \mathcal{J}=BCl(X)$. By the definition of $d_H$ (see Remark \ref{CautionRmk}), the relation $R=\{(a,b)\in A\times B:d(a,b)\leq\lambda d_H(A,B)\}$ is complete. Therefore, by $\sigma$-quasiconvexity of $X$, we have a collection $\{\gamma_{(a,b)}:(a,b)\in R\}$ of $\sigma\lambda d_H(A,B)$-Lipschitz paths in $X$, where $\gamma_{(a,b)}$ is a path from $a$ to $b$. Since $cl_X\{\gamma_r(t):r\in R\}\in\mathcal{J}$, for all $t\in[0,1]$, it follows that $A$ is $\sigma\lambda d_H(A,B)$-Lipschitz $X$-connected to $B$ in $\mathcal{J}$.

\noindent~(ii) and (iii): If $A,B\in \mathcal{J}=FS(X)$ or $A,B\in \mathcal{J}=K(X)$, then the relation $R=\{(a,b)\in A\times B:d(a,b)\leq d_H(A,B)\}$ is complete. The rest of the argument is the same as for (i).
\end{proof}

\begin{lmm}\label{QGeodExist}
Fix $L\geq 0$. Let $X$ be a metric space, $\mathcal{J}\subset PCl(X)$ a stable covering subspace, and $A,B\in\mathcal{J}$. Let $\widetilde{\mathcal{J}}:=\big\{\{x\}:x\in\widetilde{X}\big\}\cup\bigcup_{C\in\mathcal{J}}BCl(\widetilde{C})\subset K(\widetilde{X})$, which is also a stable covering subspace. Then the following statements are equivalent:
\begin{enumerate}[leftmargin=0.9cm]
\item[(i)]\label{QGeodExistST1} There exists an $L$-Lipschitz path $\gamma:[0,1]\rightarrow\mathcal{J}$ from $A$ to $B$.
\item[(ii)]\label{QGeodExistST2} There exists an $L$-Lipschitz path $\widetilde{\gamma}:[0,1]\rightarrow\widetilde{\mathcal{J}}$ from $\widetilde{A}$ to $\widetilde{B}$, such that the map $\gamma:[0,1]\rightarrow\mathcal{J},~t\mapsto\widetilde{\gamma}(t)\cap X$ is an $L$-Lipschitz path from $A$ to $B$.
\item[(iii)]\label{QGeodExistST3} $\widetilde{A}$ is $L$-Lipschitz $\widetilde{X}$-connected to $\widetilde{B}$ in $\widetilde{\mathcal{J}}$, such that restricting the connectivity to $X$ implies $A$ is $L$-Lipschitz weakly $X$-connected to $B$ in $\mathcal{J}$.
\item[(iv)] $A$ is $L$-Lipschitz weakly $X$-connected to $B$ in $\mathcal{J}$.
\end{enumerate}
\end{lmm}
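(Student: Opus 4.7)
The plan is to establish the four conditions via the cycle (i) $\Rightarrow$ (ii) $\Rightarrow$ (iii) $\Rightarrow$ (iv) $\Rightarrow$ (i), which routes the non-trivial content through the representation theorem already available for Lipschitz paths in compact-subset spaces (Theorem \ref{QGeodEquiv1}) and the sufficient condition for Lipschitz paths in bounded-subset spaces (Lemma \ref{QGeodSuff}).

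For (i) $\Rightarrow$ (ii), I would define $\widetilde{\gamma}(t) := \widetilde{\gamma(t)} = cl_{\widetilde{X}}(\gamma(t))$. Since $\gamma(t) \in \mathcal{J} \subset PCl(X)$, its closure in $\widetilde{X}$ is a compact set lying in $BCl(\widetilde{\gamma(t)}) \subset \widetilde{\mathcal{J}}$, and $\widetilde{\gamma}(t) \cap X = \gamma(t)$ because $\gamma(t)$ is already closed in $X$. The Lipschitz property of $\widetilde{\gamma}$ follows from the identity $d_H(\widetilde{A},\widetilde{B}) = d_H(A,B)$ (noted in Remark \ref{CautionRmk}), giving $d_H(\widetilde{\gamma}(t),\widetilde{\gamma}(t')) = d_H(\gamma(t),\gamma(t')) \leq L|t-t'|$.

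For (ii) $\Rightarrow$ (iii), I would apply Theorem \ref{QGeodEquiv1} to the $L$-Lipschitz path $\widetilde{\gamma}$ in the stable covering subspace $\widetilde{\mathcal{J}} \subset K(\widetilde{X})$ of the complete metric space $\widetilde{X}$. This yields a maximal family $\{\gamma_r : [0,1] \to \widetilde{X}\}_{r\in\Gamma}$ of $L$-Lipschitz paths such that $R := \{(\gamma_r(0),\gamma_r(1)):r\in\Gamma\} \subset \widetilde{A}\times\widetilde{B}$ is densely complete and $\widetilde{\gamma}(t) = cl_{\widetilde{X}}\{\gamma_r(t):r\in\Gamma\}$. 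This is exactly the data witnessing $L$-Lipschitz $\widetilde{X}$-connectivity of $\widetilde{A}$ to $\widetilde{B}$ in $\widetilde{\mathcal{J}}$; intersecting with $X$ and using $\gamma(t) = \widetilde{\gamma}(t)\cap X = cl_{\widetilde{X}}\{\gamma_r(t):r\in\Gamma\}\cap X \in \mathcal{J}$ yields the weak $X$-connectivity required by Definition \ref{DenseCompl}.

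The implication (iii) $\Rightarrow$ (iv) is immediate from the formulation of (iii), and (iv) $\Rightarrow$ (i) is a direct invocation of Lemma \ref{QGeodSuff}, which produces the $L$-Lipschitz path $\gamma(t) := cl_{\widetilde{X}}\{\gamma_r(t):r\in\Gamma\}\cap X$ in $\mathcal{J}$ from $A$ to $B$. I expect the main bookkeeping point — rather than a genuine analytic difficulty — to be confirming that $\widetilde{\mathcal{J}}$ really is a stable covering subspace of $K(\widetilde{X})$ (covering is trivial; stability uses that any element is contained in some $\widetilde{C}$ with $C\in\mathcal{J}$, and $\widetilde{C}$ is compact as the completion of a totally bounded set) so that Theorem \ref{QGeodEquiv1} applies in step (ii) $\Rightarrow$ (iii). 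Beyond that, everything reduces to invariance of Hausdorff distance under completion and previously established tools.
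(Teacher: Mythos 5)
Your proposal is correct and follows essentially the same route as the paper's own proof: lift $\gamma$ to $\widetilde{\gamma}(t)=\widetilde{\gamma(t)}$ for (i)$\Rightarrow$(ii), invoke Theorem \ref{QGeodEquiv1} on $\widetilde{\gamma}$ in the stable covering subspace $\widetilde{\mathcal{J}}\subset K(\widetilde{X})$ and restrict to $X$ for (ii)$\Rightarrow$(iii), note (iii)$\Rightarrow$(iv) is immediate, and close the cycle with Lemma \ref{QGeodSuff} for (iv)$\Rightarrow$(i). You in fact make explicit a few details the paper leaves tacit (the identity $\widetilde{\gamma}(t)\cap X=\gamma(t)$, preservation of $d_H$ under completion, and the verification that $\widetilde{\mathcal{J}}$ is a stable covering subspace), which is a welcome addition rather than a deviation.
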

\begin{proof}
Let $A,B\in \mathcal{J}$.

{\flushleft (i) implies (ii)}: Let $\gamma:[0,1]\rightarrow\mathcal{J}$ be an $L$-Lipschitz path from $A$ to $B$. Then $\widetilde{\gamma}:[0,1]\rightarrow\widetilde{\mathcal{J}},~t\mapsto \widetilde{\gamma(t)}$ gives the desired $L$-Lipschitz path from $\widetilde{A}$ to $\widetilde{B}$.

{\flushleft (ii) implies (iii)}: Let $\widetilde{\gamma}:[0,1]\rightarrow \widetilde{\mathcal{J}}$ be an $L$-Lipschitz path from $\widetilde{A}$ to $\widetilde{B}$, such that the map $\gamma:[0,1]\rightarrow\mathcal{J},~t\mapsto\widetilde{\gamma}(t)\cap X$ is an $L$-Lipschitz path from $A$ to $B$. By Theorem \ref{QGeodEquiv1}, there exists a maximal collection $\{\widetilde{\gamma}_r:[0,1]\rightarrow \widetilde{X}\}_{r\in \Gamma}$ of $L$-Lipschitz paths such that $R:=\{(\widetilde{\gamma}_r(0),\widetilde{\gamma}_r(1)):r\in\Gamma\}\subset \widetilde{A}\times \widetilde{B}$ is a densely complete relation and $~\widetilde{\gamma}(t)=cl_{\widetilde{X}}\{\widetilde{\gamma}_r(t):r\in \Gamma\}$, for all $t\in[0,1]$. \label{footnote3}
Restriction, to $X$, of the resulting $L$-Lipschitz $\widetilde{X}$-connection between $\widetilde{A}$ and $\widetilde{B}$ in $\widetilde{\mathcal{J}}$ now yields the desired $L$-Lipschitz weak $X$-connection between $A$ and $B$ in $\mathcal{J}$.

\noindent~(iii) implies (iv): This is immediate by hypotheses.

\noindent~(iv) implies (i): If $A$ is $L$-Lipschitz weakly $X$-connected to $B$ in $\mathcal{J}$, then by Lemma \ref{QGeodSuff} we get an $L$-Lipschitz path from $A$ to $B$ in $\mathcal{J}$.
\end{proof}

\begin{rmk}\label{WeakConnRmk}
The weak connectivity (by means of paths in $\widetilde{X}$ instead of paths in $X$) in Lemma \ref{QGeodExist} is essential. To see this, consider the counterexample with $X=\mathbb{Q}\subset\mathbb{R}$ (having no nonconstant paths) and the geodesic $\gamma:[0,1]\rightarrow PCl(\mathbb{Q}),~t\mapsto[t,t+1]\cap\mathbb{Q}$ from $A=[0,1]\cap\mathbb{Q}$ to $B=[1,2]\cap\mathbb{Q}$, which cannot be represented by paths in $X$.
\end{rmk}

The following is our main result of this section.

\begin{thm}[\textcolor{blue}{Representation of Lipschitz paths in $PCl(X)$}]\label{RepThmPCl}
Fix $L\geq 0$. Let $X$ be a metric space, $\mathcal{J}\subset PCl(X)$ a stable covering subspace, and $A,B\in \mathcal{J}$. There exists an $L$-Lipschitz path $\gamma:[0,1]\rightarrow\mathcal{J}$ from $A$ to $B$ if and only if there exists a family $\{\gamma_r:[0,1]\rightarrow\widetilde{X}\}_{r\in \Gamma}$ of $L$-Lipschitz paths in $\widetilde{X}$, where $\gamma_r$ is a path from $\gamma_r(0)\in \widetilde{A}$ to $\gamma_r(1)\in \widetilde{B}$, such that (i) $R:=\{(\gamma_r(0),\gamma_r(1)):r\in\Gamma\}\subset \widetilde{A}\times \widetilde{B}$ is a densely complete relation and (ii) $~cl_{\widetilde{X}}\{\gamma_r(t):r\in \Gamma\}\cap X\in \mathcal{J}$, for all $t\in[0,1]$. Moreover:
\begin{itemize}[leftmargin=0.7cm]
\item The family of paths can be chosen such that (iii) $~\gamma(t)=cl_{\widetilde{X}}\{\gamma_r(t):r\in \Gamma\}\cap X$, for all $t\in[0,1]$.
\item From the proof of Lemma \ref{QGeodExist}, when $\mathcal{J}\subset K(X)\subset PCl(X)$, we can replace $\widetilde{X}$ with $X$ and choose the collection $\{\gamma_r:[0,1]\rightarrow X\}_{r\in\Gamma}$ in the representation (iii) to be maximal.
\end{itemize}
Especially, if $\gamma$ be a $\lambda$-quasigeodesic (resp., rectifiable path), set $L:=\lambda d_H(A,B)$ (resp., $L:=\ell(\gamma)$).
\end{thm}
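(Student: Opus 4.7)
The plan is to derive this theorem as a packaged consequence of Lemma~\ref{QGeodExist} (for the biconditional) and Theorem~\ref{QGeodEquiv1} (for the pointwise representation), since the family described by conditions (i) and (ii) in the statement is precisely what Definition~\ref{DenseCompl} names an \emph{$L$-Lipschitz weak $X$-connection} from $A$ to $B$ in $\mathcal{J}$.

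First, for the main biconditional, I would invoke the equivalence (i)$\Leftrightarrow$(iv) of Lemma~\ref{QGeodExist}: the existence of an $L$-Lipschitz path $\gamma:[0,1]\to\mathcal{J}$ from $A$ to $B$ is the same as saying $A$ is $L$-Lipschitz weakly $X$-connected to $B$ in $\mathcal{J}$, which unpacks verbatim into the existence of a family $\{\gamma_r:[0,1]\to\widetilde{X}\}_{r\in\Gamma}$ of $L$-Lipschitz paths obeying (i) and (ii). No new work is needed for this half.

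Second, for the strengthened representation (iii), I would reproduce the argument from the implication (ii)$\Rightarrow$(iii) in the proof of Lemma~\ref{QGeodExist}. Given $\gamma$, form the lift $\widetilde{\gamma}:[0,1]\to\widetilde{\mathcal{J}},\ t\mapsto\widetilde{\gamma(t)}$, which is an $L$-Lipschitz path from $\widetilde{A}$ to $\widetilde{B}$ in $\widetilde{\mathcal{J}}\subset K(\widetilde{X})$ (here $\widetilde{\mathcal{J}}$ is the stable covering subspace built in Lemma~\ref{QGeodExist}). Since $\widetilde{\mathcal{J}}\subset K(\widetilde{X})$, Theorem~\ref{QGeodEquiv1} applies in the completion and yields a maximal collection $\{\widetilde{\gamma}_r:[0,1]\to\widetilde{X}\}_{r\in\Gamma}$ of $L$-Lipschitz paths with $R=\{(\widetilde{\gamma}_r(0),\widetilde{\gamma}_r(1)):r\in\Gamma\}\subset\widetilde{A}\times\widetilde{B}$ densely complete, and
\[
\widetilde{\gamma}(t)=cl_{\widetilde{X}}\{\widetilde{\gamma}_r(t):r\in\Gamma\},\qquad t\in[0,1].
\]
Intersecting with $X$ and using $\widetilde{\gamma(t)}\cap X=\gamma(t)$ (valid because $\gamma(t)\in\mathcal{J}\subset BCl(X)$ is already closed in $X$) produces the pointwise identity (iii). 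For the final bullet, when $\mathcal{J}\subset K(X)$ there is no need to climb into $\widetilde{X}$: Theorem~\ref{QGeodEquiv1} applies directly to $\gamma:[0,1]\to\mathcal{J}$, giving the maximal collection $\{\gamma_r:[0,1]\to X\}_{r\in\Gamma}$ with $\gamma(t)=cl_X\{\gamma_r(t):r\in\Gamma\}$. The ``especially'' clauses are immediate from Corollary~\ref{GeodLength}, Lemma~\ref{QgeodCharLmm}(i), and the constant-speed convention of Note~\ref{ConstSpNt}, which make a $\lambda$-quasigeodesic $\lambda d_H(A,B)$-Lipschitz and a constant-speed rectifiable path $\ell(\gamma)$-Lipschitz.

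The only delicate point, and what I expect to be the main obstacle, is the identification $\widetilde{\gamma(t)}\cap X=\gamma(t)$ used to transport the representation from $\widetilde{\mathcal{J}}$ back to $\mathcal{J}$: one must check that closing in $\widetilde{X}$ and then intersecting with $X$ does not manufacture new points, which holds because $\gamma(t)$ is closed in $X$ (hence equals $cl_X(\gamma(t))=\widetilde{\gamma(t)}\cap X$). Beyond this, the proof is essentially bookkeeping that stitches Lemma~\ref{QGeodExist} to Theorem~\ref{QGeodEquiv1}, so it should be brief.
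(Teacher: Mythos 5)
Your proposal is correct and follows essentially the same route as the paper: the biconditional is read off from the equivalence (i)$\Leftrightarrow$(iv) of Lemma~\ref{QGeodExist}, and the representation (iii) comes from lifting $\gamma$ to $\widetilde{\gamma}(t)=\widetilde{\gamma(t)}$ in $\widetilde{\mathcal{J}}\subset K(\widetilde{X})$, applying Theorem~\ref{QGeodEquiv1} there, and intersecting with $X$ via $\widetilde{\gamma(t)}\cap X=\gamma(t)$. Your explicit justification of that last identity (closedness of $\gamma(t)$ in $X$) is a detail the paper leaves implicit, but the argument is the same.
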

\begin{proof}
The equivalence is simply a restatement of Lemma \ref{QGeodExist} as the equivalence ``(i) if and only if (iv)'' in the lemma. Conclusion-(iii) is obtained as follows.\\
\indent In the proof of Lemma \ref{QGeodExist}, given an $L$-Lipschitz path $\gamma:[0,1]\rightarrow\mathcal{J}$ from $A$ to $B$, by lifting it to $\widetilde{\gamma}:[0,1]\rightarrow\widetilde{\mathcal{J}},~t\mapsto\widetilde{\gamma(t)}$, we get its representation in terms of $L$-Lipschitz paths by means of the equality $\gamma(t)=\widetilde{\gamma(t)}\cap X$, for all $t\in[0,1]$, where $\widetilde{\gamma}$ has the representation obtained at step ``(ii) implies (iii)'' of the proof of Lemma \ref{QGeodExist}.
\end{proof}

\begin{rmk}
Let $X$ be a metric space. As an immediate consequence of Theorem \ref{RepThmPCl}, if $X$ contains no nonconstant rectifiable paths (e.g., when $X$ is a $p$-snowflake, detailed in \cite{tyson-wu2005}), then neither does $K(X)$. A related result is this: By the proof of \cite[Proposition 1.4.11 in Section 1.4]{akofor2020}, if $X$ is a $p$-snowflake (see \cite{tyson-wu2005}), then so is $BCl(X)$.
\end{rmk}

\begin{note}\label{QGeodNote}
Fix $\lambda\geq 1$. If $A\in BCl(X)$ and $R>0$, let $A_R:=\overline{N}_R(A)=\{x\in X:\dist(x,A)\leq R\}$. For any $\lambda$-quasigeodesic $\gamma:[0,1]\rightarrow BCl(X)$, with ${L}:=\lambda d_H(\gamma(0),\gamma(1))$, the definition of Hausdorff distance $d_H$ implies
\[
\gamma(t)\subset \gamma(0)_{t{L}}\cap\gamma(1)_{(1-t){L}},~~\textrm{for all}~~t\in[0,1],
\]
since $d_H(\gamma(0),\gamma(t))\leq {L} t$ and $d_H(\gamma(t),\gamma(1))\leq {L} (1-t)$. Moreover, any ${L}$-Lipschitz path $c:[0,1]\rightarrow X$ from any $a\in \gamma(0)$ to any $b\in\gamma(1)$ similarly satisfies
\[
c(t)\in a_{t{L}}\cap b_{(1-t){L}}\subset \gamma(0)_{t{L}}\cap\gamma(1)_{(1-t){L}},~~\textrm{for all}~~t\in[0,1].
\]
\end{note}

\begin{prp}[\textcolor{blue}{Representation of some quasigeodesics in BCl(X)}]\label{QGeodEquiv2}
Fix $\lambda>1$. Let $X$ be a geodesic space, $A,B\in BCl(X)$, and $L:=\lambda d_H(A,B)$. Suppose the map $\gamma:[0,1]\rightarrow BCl(X)$, $\gamma(t)=A_{tL}\cap B_{(1-t)L}$, is a $\lambda$-quasigeodesic (e.g., when $A,B\in K(X)$, as shown by M\'emoli and Wan in \cite[Theorem 3.6]{MemoliWan2023} or by Serra in \cite[Theorem 1]{serra1998}).

Then there exists a maximal collection $\{\gamma_r:[0,1]\rightarrow X\}_{r\in \Gamma}$ of $L$-Lipschitz paths in $X$, where $\gamma_r$ is a path from $\gamma_r(0)\in A$ to $\gamma_r(1)\in B$, such that (i) $R:=\{(\gamma_r(0),\gamma_r(1)):r\in\Gamma\}\subset A\times B$ is a densely complete relation and (ii) $\gamma(t)=cl_X\{\gamma_r(t):r\in \Gamma\}$, for all $t\in[0,1]$.
\end{prp}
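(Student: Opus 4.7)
The plan is to mimic the Zorn's lemma argument in Theorem \ref{QGeodEquiv1}, with the geodesic structure of $X$ together with the strict inequality $\lambda>1$ substituting for the compactness of the $\gamma(t)$'s invoked there. First I would introduce the poset $\mathcal{S}$ of pairs $(\eta,\Delta)$, where $\Delta$ is an index set and $\eta:[0,1]\to BCl(X)$ is an $L$-Lipschitz map of the form $\eta(t)=cl_X\{\eta_r(t):r\in\Delta\}\subset\gamma(t)$ for $L$-Lipschitz paths $\eta_r:[0,1]\to X$ from $\eta_r(0)\in A$ to $\eta_r(1)\in B$, and $R_\Delta:=\{(\eta_r(0),\eta_r(1)):r\in\Delta\}\subset\eta(0)\times\eta(1)$ is densely complete; order $\mathcal{S}$ by componentwise inclusion, exactly as in Theorem \ref{QGeodEquiv1}.

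To see $\mathcal{S}$ is nonempty, I invoke Remark \ref{CautionRmk}(ii): since $\lambda>1$, the relation $R_\lambda=\{(a,b)\in A\times B:d(a,b)\leq L\}$ is complete, and for any $(a,b)\in R_\lambda$ a geodesic $\gamma_{(a,b)}:[0,1]\to X$ from $a$ to $b$ (available because $X$ is geodesic) is $d(a,b)$-Lipschitz, hence $L$-Lipschitz, with $\dist(\gamma_{(a,b)}(t),A)\leq td(a,b)\leq tL$ and symmetrically $\dist(\gamma_{(a,b)}(t),B)\leq(1-t)L$, so $\gamma_{(a,b)}(t)\in\gamma(t)$. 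The chain-upper-bound step then transfers verbatim from Claim \ref{ConstrClaim} in Theorem \ref{QGeodEquiv1}, since $cl_X(\bigcup_\alpha\eta^\alpha(t))$ remains a closed bounded subset of $\gamma(t)$ and the union of densely complete relations is densely complete. Zorn's lemma provides a maximal element $(\eta',\Delta')$, and applying the initial-geodesic construction to every $(a,b)\in R_\lambda$ (whose projections cover $A$ and $B$ by completeness) forces $\eta'(0)=A$ and $\eta'(1)=B$, yielding the densely-complete conclusion (i).

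The main obstacle will be establishing $\eta'(t)=\gamma(t)$ for every intermediate $t$. Suppose $\eta'(s)\subsetneq\gamma(s)$ for some $s\in(0,1)$; by closedness of $\eta'(s)$, pick $x_s\in\gamma(s)\setminus\eta'(s)$ and $\varepsilon>0$ with $N_\varepsilon(x_s)\cap\eta'(s)=\emptyset$. To contradict maximality I would construct an $L$-Lipschitz path $c:[0,1]\to X$ from some $a\in A$ to some $b\in B$ with $c(t)\in\gamma(t)$ for all $t$ and $c(s)\in N_\varepsilon(x_s)$; adjoining such $c$ to $\Delta'$ would strictly enlarge $(\eta',\Delta')$. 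Because $BCl(X)$ lacks the compactness exploited in Theorem \ref{QGeodComp}, the infima $\dist(x_s,A)\leq sL$ and $\dist(x_s,B)\leq(1-s)L$ may fail to be attained, but the strict hypothesis $\lambda>1$ supplies the slack: choose $a\in A$ with $d(a,x_s)$ sufficiently close to $sL$ together with a partner $b\in B$ satisfying $(a,b)\in R_\lambda$, and build $c$ as a geodesic from $a$ toward $x_s$ (possibly with a short plateau) followed by a geodesic onward to $b$. Lipschitz continuity places $c(s)$ within $\varepsilon$ of $x_s$, while the triangle estimates $\dist(c(t),A)\leq d(c(t),a)\leq tL$ and $\dist(c(t),B)\leq d(c(t),b)\leq(1-t)L$ (from $L$-Lipschitzness of $c$ together with $c(0)=a$ and $c(1)=b$) keep $c(t)\in\gamma(t)=A_{tL}\cap B_{(1-t)L}$ throughout.
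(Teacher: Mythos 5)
Your proposal follows the same route the paper intends: the paper's own proof of Proposition \ref{QGeodEquiv2} is a one-line pointer to the Zorn's-lemma argument of Theorem \ref{QGeodEquiv1} combined with Remark \ref{CautionRmk}, geodesy of $X$, Note \ref{QGeodNote}, Claim \ref{ConstrClaim}, and Lemma \ref{GluLmm}, and you have reconstructed exactly those ingredients: the complete relation $R_\lambda$ together with geodesics of $X$ to seed the poset, the Note \ref{QGeodNote} estimates to keep every component path inside $\gamma(t)=A_{tL}\cap B_{(1-t)L}$, Claim \ref{ConstrClaim} for chain upper bounds, and a maximality contradiction at a missing point $x_s$.

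The one step your write-up does not actually establish --- and it is the genuinely delicate point once $A,B$ are merely bounded closed --- is the $L$-Lipschitz bound on the \emph{second} leg of the path $c$ in the maximality argument. To place $c(s)$ within $\varepsilon$ of $x_s$ while keeping $c$ $L$-Lipschitz on all of $[0,1]$ you need a point $x''$ with $d(x'',x_s)<\varepsilon$ satisfying \emph{both} $d(a,x'')\leq sL$ for some $a\in A$ \emph{and} $d(x'',b)\leq(1-s)L$ for some $b\in B$; equivalently, $\bigcup_{a\in A}\overline{N}_{sL}(a)\cap\bigcup_{b\in B}\overline{N}_{(1-s)L}(b)$ must be dense in $\gamma(s)$. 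Your construction handles only the first constraint: sliding $x''$ along a geodesic from $a$ toward $x_s$ secures $d(a,x'')\leq sL$, but the best resulting bound on the second leg is then $d(x'',b)\leq d(x'',x_s)+d(x_s,b)<(1-s)L+2\delta$ (if $b$ is chosen $\delta$-almost-nearest to $x_s$ in $B$) or $d(x'',b)\leq sL+L$ (if $b$ is the $R_\lambda$-partner of $a$), and neither yields speed at most $L$ over $[s,1]$; the closing sentence ``$\dist(c(t),B)\leq d(c(t),b)\leq(1-t)L$ from $L$-Lipschitzness of $c$'' presupposes the very bound at issue. Moving $x''$ toward $B$ instead spoils the first constraint, and a plateau only shortens the time available for one leg, so the two possibly non-attained infima cannot in general be repaired independently. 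This is where an additional argument (or hypothesis) is needed; to be fair, the paper's own proof is silent on exactly this point as well.
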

\begin{proof}
This is just a version of the proof of Theorem \ref{QGeodEquiv1} based on Remark \ref{CautionRmk}, geodesy of $X$, Note \ref{QGeodNote}, Claim \ref{ConstrClaim}, and Lemma \ref{GluLmm}.
\end{proof}

\section{\textnormal{\bf The case of finite-subset spaces}}\label{FSSpGeodesics}

\noindent Since $FS_n(X)$ and $FS(X)$ are stable covering subspaces of $PCl(X)$, the results of Section \ref{SSpGeodesics} already say a lot about finite-subset spaces (see Corollary \ref{FS_BCl_QConv} for example). Our aim here is to further consider a few of the related properties of finite-subset spaces that are not yet obvious from the results of Section \ref{SSpGeodesics}.

For any space $X$, $FS_n(X)$ is a quotient space of $X^n$ via the ``unordering'' map $q:X^n\rightarrow FS_n(X)$, $(x_1,...,x_n)$$\mapsto$$\{x_1,...,x_n\}$ as a quotient map. Consequently, we will switch notation and write an element $x\in FS_n(X)$ in the form $x=\{x_1,...,x_n\}=q(x_1,...,x_n)$ for an element $(x_1,...,x_n)\in X^n$.

Since finite subsets are compact, we have the quasiconvexity constant $\lambda\geq 1$ throughout this section. Recall the meaning of complete, reduced, and proximal relations from Definition \ref{RelTypeDfn}.

The following result is the existential part of the statement of Theorem \ref{RepThmPCl} for the stable covering subspace $\mathcal{J}=FS_n(X)\subset K(X)\subset PCl(X)$. It is used here to illustrate applicability of Theorem \ref{RepThmPCl}, specifically, to show that $FS_n(X)$ is not geodesic for $n\geq 3$ (see the alternative proof of Corollary \ref{GeodFailCrl}(ii)).

\begin{thm}[\textcolor{blue}{Criterion for Lipschitz paths in $FS_n(X)$}]\label{QGeodExistFS}
Let $X$ be a metric space and $n\geq 1$. For any $x,y\in FS_n(X)$, an $L$-Lipschitz path $\gamma$ exists from $x$ to $y$ in $FS_n(X)$ if and only if there exist (i) a complete relation $R\subset x\times y$ and (ii) a collection $\left\{\gamma_{(a,b)}:(a,b)\in R\right\}$ of $L$-Lipschitz paths in $X$, where $\gamma_{(a,b)}$ is a path from $a$ to $b$, such that $\left|\left\{\gamma_r(t):r\in R\right\}\right|\leq n$, for all $t\in[0,1]$.

Especially, if $\gamma$ be a $\lambda$-quasigeodesic (resp., rectifiable path), set $L:=\lambda d_H(x,y)$ (resp., $L:=\ell(\gamma)$).
\end{thm}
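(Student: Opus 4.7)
The statement is the natural specialization of Theorem~\ref{RepThmPCl} to the stable covering subspace $\mathcal{J}=FS_n(X)\subset K(X)\subset PCl(X)$ (note that any subset of a set with at most $n$ elements again has at most $n$ elements, so $FS_n(X)$ is indeed stable). Two simplifications come from finiteness: on a finite subset of $X$, a dense subset is the whole set, so ``densely complete'' (Definition~\ref{DenseCompl}) collapses to ``complete'' (Definition~\ref{RelTypeDfn}), and $cl_X$ acts as the identity. The plan is therefore to reduce the forward direction to Theorem~\ref{QGeodEquiv1} (the $K(X)$ version with a representing family taking values in $X$ rather than $\widetilde{X}$), and to handle the backward direction by the Hausdorff-distance estimate used in the proof of Lemma~\ref{QGeodSuff}.

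\textbf{Forward direction.} Given an $L$-Lipschitz path $\gamma:[0,1]\to FS_n(X)$ from $x$ to $y$, I would apply Theorem~\ref{QGeodEquiv1} to extract a maximal family $\{\widetilde{\gamma}_r:[0,1]\to X\}_{r\in\Gamma}$ of $L$-Lipschitz paths whose endpoint pairs form a densely complete relation $R'\subset x\times y$, with $\gamma(t)=cl_X\{\widetilde{\gamma}_r(t):r\in\Gamma\}$. Since $x$ and $y$ are finite, $R'$ is already complete (in the sense of Definition~\ref{RelTypeDfn}) and the closure operation is redundant. For each $(a,b)\in R'$ I would select one representative $\gamma_{(a,b)}:=\widetilde{\gamma}_{r_{(a,b)}}$ from the family with those endpoints and set $R:=R'$. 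The cardinality bound in condition (ii) is then automatic: each $\gamma_{(a,b)}(t)$ lies in $\gamma(t)\in FS_n(X)$ (by construction in Theorem~\ref{QGeodComp}), so $|\{\gamma_{(a,b)}(t):(a,b)\in R\}|\leq|\gamma(t)|\leq n$.

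\textbf{Backward direction and main obstacle.} For the converse, I would define $\gamma(t):=\{\gamma_{(a,b)}(t):(a,b)\in R\}$; the cardinality hypothesis puts $\gamma(t)$ into $FS_n(X)$, completeness of $R$ yields $\gamma(0)=x$ and $\gamma(1)=y$, and the $L$-Lipschitz bound follows verbatim from the computation in the proof of Lemma~\ref{QGeodSuff} (with $\widetilde{X}$ replaced by $X$ and $cl_{\widetilde{X}}\{\cdot\}\cap X$ replaced by the set itself). The two ``especially'' clauses reduce to the definition of a $\lambda$-quasigeodesic and to Corollary~\ref{GeodLength}. The only genuinely delicate step\,--\,and the one I would expect to be the main obstacle if one tried to prove the result from scratch rather than invoking Theorem~\ref{QGeodEquiv1}\,--\,is producing a single family of $L$-Lipschitz paths that simultaneously witnesses left-completeness (every $a\in x$ is an initial endpoint) and right-completeness (every $b\in y$ is a terminal endpoint) of $R$, rather than two separate families for the two directions. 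Here this is handled for free by the Zorn-style maximality argument already embedded in the proof of Theorem~\ref{QGeodEquiv1}.
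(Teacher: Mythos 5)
Your proposal is correct and follows essentially the same route as the paper, whose proof simply invokes Theorem~\ref{RepThmPCl} for $\mathcal{J}=FS_n(X)$ — which in turn chains through Theorem~\ref{QGeodEquiv1} for necessity and the Hausdorff-distance estimate of Lemma~\ref{QGeodSuff} for sufficiency, exactly as you do, with the same observations that dense completeness and $cl_X$ trivialize on finite sets. (The only cosmetic difference is that the final ``delicate step'' you flag is not really delicate: one could also just take the union of the left-complete family from Theorem~\ref{QGeodComp} applied to $\gamma$ and the right-complete family from the reversed path $t\mapsto\gamma(1-t)$, since both families take values in $\gamma(t)$ and the cardinality bound is preserved.)
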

\begin{proof}
See the proof of Theorem \ref{RepThmPCl} for the case of $\mathcal{J}=FS_n(X)$.
\end{proof}

Note that the relation $R\subset x\times y$ in Theorem \ref{QGeodExistFS} is necessarily $\lambda$-proximal if $L=\lambda d_H(x,y)$, since
\[
d(a,b)=d(\gamma_{(a,b)}(0),\gamma_{(a,b)}(1))\leq L|0-1|=\lambda d_H(x,y),
\]
for all $(a,b)\in R$.

The following is a restricted version of Lemma \ref{QGeodSuff}.

\begin{thm}[\textcolor{blue}{Sufficient condition for Lipschitz paths in $FS(X)$}]\label{QGeodSuffFS}
Let $X$ be a $\lambda$-quasiconvex space, $x,y\in FS_n(X)$, and $m\geq n$. If there exists an $\alpha$-proximal complete relation $R\subset x\times y$ such that $|R|\leq m$, then there exists a $\lambda\alpha$-quasigeodesic between $x$ and $y$ in $FS_m(X)$.
\end{thm}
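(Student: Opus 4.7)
The plan is to construct the desired $\lambda\alpha$-quasigeodesic directly as the pointwise union of quasigeodesics in $X$, one for each pair in the relation $R$, and then verify the Lipschitz bound in $FS_m(X)$. This is essentially the finite-cardinality version of the construction in Lemma \ref{QGeodSuff}, so no new machinery is needed; the only new issue is keeping the cardinality of $\gamma(t)$ under $m$, which is built into the hypothesis $|R|\le m$.

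First, I would set $L:=\lambda\alpha d_H(x,y)$. For each pair $(a,b)\in R$, $\lambda$-quasiconvexity of $X$ yields a $\lambda$-quasigeodesic $\gamma_{(a,b)}:[0,1]\to X$ from $a$ to $b$, which by Definition \ref{QsiGeoDfn} (or Corollary \ref{GeodLength}) is a $\lambda d(a,b)$-Lipschitz path. Since $R$ is $\alpha$-proximal, $d(a,b)\le \alpha d_H(x,y)$, so each $\gamma_{(a,b)}$ is in fact $L$-Lipschitz.

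Next, I would define $\gamma:[0,1]\to FS_m(X)$ by
\[
\gamma(t):=\{\gamma_{(a,b)}(t):(a,b)\in R\}.
\]
The cardinality bound $|\gamma(t)|\le |R|\le m$ ensures that indeed $\gamma(t)\in FS_m(X)$ for each $t$. Completeness of $R$ (both left- and right-complete, per Definition \ref{RelTypeDfn}(3)) gives that the projections $\{a:(a,b)\in R\}=x$ and $\{b:(a,b)\in R\}=y$, so $\gamma(0)=x$ and $\gamma(1)=y$. For the Lipschitz estimate, copying the computation at the end of the proof of Lemma \ref{QGeodSuff}, we get
\[
d_H\bigl(\gamma(t),\gamma(t')\bigr)\;\le\;\sup_{(a,b)\in R}\,d\bigl(\gamma_{(a,b)}(t),\gamma_{(a,b)}(t')\bigr)\;\le\;L\,|t-t'|,
\]
for all $t,t'\in[0,1]$. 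In particular, $\gamma$ is continuous, hence a path, and it is $L$-Lipschitz with $L=\lambda\alpha\,d_H(x,y)$. By Definition \ref{QsiGeoDfn}, $\gamma$ is a $\lambda\alpha$-quasigeodesic between $x$ and $y$ in $FS_m(X)$.

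There is no genuine obstacle here: every ingredient—$\lambda$-quasiconvexity, $\alpha$-proximality, completeness of $R$, and the supremum-over-$R$ trick for bounding Hausdorff distance by pointwise distance of the component paths—plugs in directly. The only subtlety worth flagging is that the argument is really a specialization of Lemma \ref{QGeodSuff}: the hypothesis $|R|\le m$ is exactly what upgrades the conclusion from ``a Lipschitz path exists in some subset space containing $\{\gamma_r(t)\}$'' to ``a Lipschitz path exists in $FS_m(X)$'', with the factor $\alpha$ absorbing the proximality defect and the factor $\lambda$ absorbing the quasiconvexity defect of $X$.
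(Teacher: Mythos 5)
Your proposal is correct and follows essentially the same route as the paper: define $\gamma(t)=\{\gamma_{(a,b)}(t):(a,b)\in R\}$ using $\lambda$-quasigeodesics in $X$, use completeness of $R$ for the endpoints, the cardinality bound $|R|\le m$ for membership in $FS_m(X)$, and the sup-over-$R$ estimate on $d_H$ combined with $\alpha$-proximality for the $\lambda\alpha d_H(x,y)$-Lipschitz bound. No discrepancies to report.
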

\begin{proof}
Assume some $R\subset x\times y$ is an $\alpha$-proximal complete relation with $|R|\leq m$. Then the map $\gamma:[0,1]\rightarrow X(m)$ given by
$\gamma(t):=\left\{\gamma_{(a,b)}(t):(a,b)\in R,~\gamma_{(a,b)}~\textrm{a $\lambda$-quasigeodesic in $X$ from $a$ to $b$}\right\}$ is a $\lambda\alpha$-quasigeodesic from $x$ to $y$, since $\gamma(0)=x$, $\gamma(1)=y$, and, for all $t,t'\in[0,1]$,
\begin{align*}
d_H(\gamma(t),\gamma(t'))&=\max\Big\{\!\max_{(a,b)\in R}\min_{(c,d)\in R}d\left(\gamma_{(a,b)}(t),\gamma_{(c,d)}(t')\right),\max_{(c,d)\in R}\min_{(a,b)\in R}d\left(\gamma_{(a,b)}(t),\gamma_{(c,d)}(t')\right)\!\Big\}\\
   &\leq\textstyle\max_{(a,b)\in R}d\left(\gamma_{(a,b)}(t),\gamma_{(a,b)}(t')\right)\leq\lambda\max_{(a,b)\in R}d(a,b)|t-t'|\\
   &\leq\lambda\alpha d_H(x,y)|t-t'|. \qedhere
\end{align*}
\end{proof}

\begin{dfn}[\textcolor{blue}{Midpoint, Opposite points, Spaced pair}]
Let $X$ be a geodesic space and $x,y,z\in X$. We call $z$ a \textbf{midpoint} between $x$ and $y$ if ~$d(x,y)=d(x,z)+d(z,y)=2d(x,z)=2d(z,y)$. In this case, we also say $x$ and $y$ are \textbf{opposite} with respect to $z$. In a metric space $Z$, two points $z_1,z_2\in Z$ form a \textbf{$(\sigma,D(\sigma,z_1,z_2))$-spaced pair} (where $\sigma>0$ and $D(\sigma,z_1,z_2)\subset Z$) if $\overline{N}_r(z_1)\cap D(\sigma,z_1,z_2)\cap\overline{N}_r(z_2)=\emptyset$ for every $0<r<\sigma d(z_1,z_2)$, or equivalently, if $\sigma d(z_1,z_2)\leq\max\{d(z_1,z),d(z,z_2)\}$ for all $z\in D(\sigma,z_1,z_2)$. A $(1,Z)$-spaced pair is simply called a \textbf{spaced pair}.

Note that if $\sigma>1$, then $z_1,z_2$ can form a $(\sigma,D(\sigma,z_1,z_2))$-spaced pairs only if $D(\sigma,z_1,z_2)\subset Z\backslash\{z_1,z_2\}$.
\end{dfn}

\begin{crl}\label{GeodFailCrl}
Let $X$ be a geodesic space and $n\geq 3$. (i) The quasiconvexity constant of $FS_n(X)$ is at least $2$. (ii) $FS_n(X)$ is not geodesic (i.e., the quasiconvexity constant of $FS_n(X)$ exceeds $1$).
\end{crl}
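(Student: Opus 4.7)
Conclusion (ii) follows immediately from (i), since a quasiconvexity constant at least $2 > 1$ precludes $FS_n(X)$ from being $1$-quasiconvex. I focus on (i), exhibiting $x, y \in FS_n(X)$ for which no $\lambda$-quasigeodesic exists when $\lambda < 2$.

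The construction uses two far-apart ``merge/split'' clusters together with $n - 3$ shared ``anchor'' points. Using the geodesic structure of $X$, pick distinct $z_1, z_2 \in X$ with $d := d(z_1, z_2)$ and midpoint $m$; in a region of $X$ distant from $\{z_1, z_2, m\}$, pick $B$ on a geodesic and $w_1, w_2$ on either side of $B$ satisfying $d(w_i, B) = d/2$ and $d(w_1, w_2) = d$. Pick $n - 3$ further points $P_1, \ldots, P_{n-3} \in X$ that are pairwise far apart and distant from both clusters. Set $x := \{z_1, z_2, B, P_1, \ldots, P_{n-3}\}$ and $y := \{m, w_1, w_2, P_1, \ldots, P_{n-3}\}$, both in $FS_n(X)$; one checks $d_H(x, y) = d/2$. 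By Theorem \ref{QGeodExistFS}, a $\lambda$-quasigeodesic in $FS_n(X)$ corresponds to a complete relation $R \subset x \times y$ together with $L := \lambda d / 2$-Lipschitz paths $\{\gamma_r\}_{r \in R}$ satisfying $|\{\gamma_r(t) : r \in R\}| \leq n$ for all $t$. For $\lambda < 2$ we have $L < d$, so the only $\lambda$-proximal pairs are $(z_1, m), (z_2, m), (B, w_1), (B, w_2)$ and $(P_j, P_j)$ for $j = 1, \ldots, n - 3$, forcing $R$ to consist of exactly these $n + 1$ pairs; the anchor paths $\gamma_{(P_j, P_j)}$ may be taken constant at $P_j$.

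The crux is a triangle bound on coincidence times. If $\gamma_{(z_1, m)}(t) = \gamma_{(z_2, m)}(t) = P$, then $d = d(z_1, z_2) \leq d(z_1, P) + d(P, z_2) \leq 2Lt$, giving $s_1 := \inf\{t : \gamma_{(z_1, m)}(t) = \gamma_{(z_2, m)}(t)\} \geq d/(2L)$; symmetrically, $s_2 := \sup\{t : \gamma_{(B, w_1)}(t) = \gamma_{(B, w_2)}(t)\} \leq 1 - d/(2L)$. Because clusters and anchors remain in pairwise far-apart regions (no path with Lipschitz constant $L < d$ can bridge them), the total cardinality at $t$ equals $c_1(t) + c_2(t) + (n - 3)$, where $c_i(t) \in \{1, 2\}$ is the number of distinct positions among cluster-$i$ paths. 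Imposing the constraint $\leq n$ forces $c_1 + c_2 \leq 3$, i.e., at every $t$ one of the cluster-pairs coincides; hence $[0, 1] \subseteq [s_1, 1] \cup [0, s_2]$, which gives $s_1 \leq s_2$ and thus $d/(2L) \leq 1 - d/(2L)$, i.e., $\lambda \geq 2$.

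The main obstacle is realizing this configuration in an arbitrary geodesic space $X$: the two clusters and the anchors must be embedded at sufficiently separated scales for the ``far-apart'' hypothesis to hold. For nontrivial $X$ this is routine by rescaling short geodesic segments, since the entire triangle-and-cardinality argument is invariant under scaling; in particular, both the ``merge pair'' $z_1, z_2$ and the ``split pair'' $w_1, w_2$ can be manufactured using midpoints of short geodesics, and any remaining points of $X$ serve as anchors once placed far enough apart relative to $d$.
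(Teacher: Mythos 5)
Your proof is correct, and for part (i) it takes a genuinely different route from the paper's. The paper establishes the lower bound $2$ by a ``spaced pair'' argument: it builds $x,y$ out of mixed clusters (two $3$-point clusters with a midpoint structure plus filler $2$-point clusters) so that any $z\in FS_n(X)$ with $\max\{d_H(x,z),d_H(z,y)\}<d_H(x,y)$ would be forced to have at least $n+1$ elements; applying this to $\gamma(1/2)$ gives $d_H(x,y)\le(\lambda/2)d_H(x,y)$ directly, with no machinery beyond the definition of $d_H$. You instead invoke the representation theorem (Theorem \ref{QGeodExistFS}) to decompose a hypothetical $\lambda$-quasigeodesic into component $L$-Lipschitz paths, observe that completeness plus $L<d$ pins down the relation to exactly $n+1$ pairs, and then run a coincidence-timing argument: the merge pair cannot coincide before $t=d/(2L)$, the split pair cannot coincide after $t=1-d/(2L)$, and the cardinality bound $|\gamma(t)|\le n$ forces one coincidence at every $t$, whence $L\ge d$. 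This is essentially a quantitative strengthening of the paper's \emph{alternative} proof of (ii) (which only rules out $\lambda=1$ by showing the two geodesics into a midpoint cannot join up); your version extracts the sharp constant $2$ from that picture. The trade-off: the paper's argument for (i) is more elementary and self-contained, while yours leans on the compactness machinery behind Theorem \ref{QGeodExistFS} but localizes the obstruction more explicitly. Two small points of care (neither a gap): in the necessity direction you do not get to \emph{choose} the anchor paths to be constant --- but they stay within $L/2$ of their anchors, which is all the counting needs; and the realizability of your configuration in an arbitrary geodesic space is handled exactly as in the paper, by shrinking $d$ so that everything fits with the required separation along geodesic segments.
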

\begin{proof}
Fix a real number $\varepsilon>0$. Pick disjoint sets $x=\{x_1,...,x_n\}$ and $y=\{y_1,...,y_n\}$ in $FS_n(X)\backslash FS_{n-1}(X)$, i.e., $x\cup y$ contains exactly $2n$ elements. Since $X$ is geodesic, we can choose the elements of $x$ and $y$ to satisfy the following arrangements (where the situation seems easiest to visualize if we let the said elements lie along a single geodesic path). Arrange the points in $x\cup y$ into sets $A_1,...,A_k\subset x\cup y=A_1\cup\cdots\cup A_k$ such that the following hold:
\begin{enumerate}[leftmargin=0.7cm]
\item\label{SpairItem1} $|A_i|=3$ for $1\leq i\leq s$, $|A_i|=2$ for $s+1\leq i\leq k$, and each $A_i$ contains at least one member from $x$ and at least one member from $y$ (i.e., the 2 or 3 members of each $A_i$ are mixed).
\item\label{SpairItem2} $\dist(A_i,A_j)>2\varepsilon$, for all $i\neq j$, $\diam(A_1)=...=\diam(A_s)=2\varepsilon$, $\diam(A_{s+1})=...=\diam(A_k)=\varepsilon$, and in each $A_i$ with $|A_i|=3$ the lone element from $x$ (resp., $y$) is a midpoint of the two elements that both come from $y$ (resp., $x$).
\end{enumerate}
Note that
\[
2n=|x|+|y|=|A_1|+\cdots+|A_k|=3s+2(k-s)=s+2k.
\]
By construction, $d_H(x,y)=\varepsilon$ and therefore, the only complete relation $R\subset x\times y$ that is proximal (as necessary by Theorem \ref{QGeodExistFS} for the existence of a geodesic between $x$ and $y$) has cardinality
\[
|R|=2s+(k-s)=s+k=2n-k\geq n+1~~\textrm{(if $s\geq 2$)}.
\]
This is because completeness of $R$ requires that in each $A_i$ of cardinality $3$ (i.e., $1\leq i\leq s$), the lone element from $x$ or $y$ must pair up with each of the other two elements (resulting in two elements of $R$), while in each $A_i$ of cardinality $2$ (i.e., $s+1\leq i\leq k$) we have exactly one pairing of its two elements (resulting in one element of $R$).

Fix $s\geq 2$. Then we also have $k\geq 2$ (since $n\geq 3$), and so $|R|\leq 2n-2$, that is,
\[
n+1\leq|R|\leq 2n-2~~\textrm{(where $|R|=2n-2$ holds only when $k=s=2$ and $n=3$)}.
\]
(\emph{Note}: The worst case equality $|R|=2n-2$ above might actually hold for $n>3$ if one instead rearranges $x\cup y$, say, into two groups $A=\{x_1,...,x_{n-1},y_n\}$ and $B=\{y_1,...,y_{n-1},x_n\}$ (where $x\cup y=A\cup B$) such that $\dist(A,B)>2\varepsilon$ and $\diam(A)=\diam(B)\leq 2\varepsilon$. By a similar argument as before, the only proximal complete relation here is $R=\{(x_i,y_n):i=1,...,n-1\}\cup\{(x_n,y_i):i=1,...,n-1\}$, which has cardinality $|R|=2n-2$. Depending on the dimension/structure of $X$, the elements in $A$ and $B$ could be further arranged to achieve certain desired results.)

{\flushleft \emph{Proof of (i)}}: The points $x,y$ above form a spaced pair in $FS_n(X)$, i.e., $\overline{N}_r(x)\cap\overline{N}_r(y)=\emptyset$ whenever $0<r<d_H(x,y)$, or equivalently, $d_H(x,y)\leq\max\{d_H(x,z),d_H(y,z)\}$, for all $z\in FS_n(X)$. To see this, observe that if $z\in\overline{N}_r(x)\cap\overline{N}_r(y)$ for some $0<r<d_H(x,y)$, then by the definition of Hausdorff distance, $A_1,...,A_s$ each neighbor at least two elements of $z$ and $A_{s+1},...,A_k$ each neighbor at least one element of $z$, giving $|z|\geq2s+(k-s)=|R|\geq n+1$ (i.e., $z\not\in FS_n(X)$). So, given any $\lambda$-quasigeodesic $\gamma:[0,1]\rightarrow FS_n(X)$ from $x$ to $y$, we have $d_H(x,y)\leq \max\{d_H(x,\gamma(1/2)),d_H(\gamma(1/2),y)\}\leq(\lambda/2)d_H(x,y)$, and so $~\lambda\geq 2$.

{\flushleft \emph{Proof of (ii)}:} This is an immediate consequence of (i). Alternatively, in the absence of (i), we can still directly give a proof of (ii) based on Theorem \ref{QGeodExistFS} as follows:

\emph{Alternative proof of (ii)}: Consider the spaced pair $x,y\in FS_n(X)$ constructed above (and the relation $R\subset x\times y$). We will show that, in view of Theorem \ref{QGeodExistFS}, any associated collection of paths $\left\{\gamma_{(a,b)}:(a,b)\in R\right\}$ in $X$ violates the necessity requirement ``$|\{\gamma_{(a,b)}(t):(a,b)\in R\}|\leq n$, for all $t\in[0,1]$'' of the theorem.

Let $A$ denote any of $A_1,...,A_s$, and assume without loss of generality (due to symmetry between $x$ and $y$) that $|A\cap x|=2$ and $|A\cap y|=1$, and let $A=\{x_{i_1},y_j,x_{i_2}\}$, where by hypotheses $y_j$ is a midpoint between $x_{i_1}$ and $x_{i_2}$ (i.e., $x_{i_1}$ and $x_{i_2}$ are opposite with respect to $y_j$). Then it is clear that these points satisfy $d(y_j,x_{i_1})=d(y_j,x_{i_2})=d_H(x,y)=\varepsilon$.
Therefore, the two possible component paths (of a geodesic in $FS_n(X)$ between $x$ and $y$), namely, $\gamma_1=\gamma_{(x_{i_1},y_j)}$ and $\gamma_2=\gamma_{(x_{i_2},y_j)}$ are necessarily geodesics in $X$ that satisfy the following: For $t,t'\in[0,1]$,
\[
d\left(\gamma_1(t),\gamma_1(t')\right)= d_H(x,y)|t-t'|~~\textrm{and}~~d\left(\gamma_2(t),\gamma_2(t')\right)= d_H(x,y)|t-t'|,
\]
and so also satisfy the inter-path distance bound
\[
\left|d\left(\gamma_1(t),\gamma_2(t)\right)-d\left(\gamma_1(t'),\gamma_2(t')\right)\right|\leq 2d_H(x,y)|t-t'|.
\]
If $\gamma_1$ and $\gamma_2$ joint up at some $t'=t^\ast\in(0,1)$, then $|d(\gamma_1(t),\gamma_2(t))-0|\leq 2d_H(x,y)|t-t^\ast|$, and so
\[
d(\gamma_1(0),\gamma_2(0))\leq 2d_H(x,y)t^\ast~~\textrm{and}~~d(\gamma_1(1),\gamma_2(1))\leq 2d_H(x,y)|1-t^\ast|,
\]
which is a contradiction since the midpoint/opposite locations of the endpoints of the paths imply
\[
d(\gamma_1(0),\gamma_2(0))>2d_H(x,y)t'~~\textrm{or}~~d(\gamma_1(1),\gamma_2(1))>2d_H(x,y)|1-t'|,~~\textrm{for any}~~t'\in(0,1).
\]
This shows it is impossible for $\gamma_1$ and $\gamma_2$ to join up into a single path. Hence, the necessity requirement ``$|\{\gamma_{(a,b)}(t):(a,b)\in R\}|\leq n$, for all $t\in[0,1]$'' of Theorem \ref{QGeodExistFS} cannot be satisfied.
\end{proof}

\begin{lmm}\label{GeodExistLmm2}
Let $X$ be a metric space and $x,y\in FS(X)$. There exists a proximal complete relation $R\subset x\times y$ such that $|R|\leq |x|+|y|$. Moreover, if $|x|\geq 2$ and $|y|\geq 2$, then we can choose $R$ such that $~|R|\leq|x|+|y|-2$.
\end{lmm}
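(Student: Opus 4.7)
The plan is to apply Remark \ref{CautionRmk}(i) twice for the basic bound $|x|+|y|$, and then to improve it by coordinating the two resulting choices through a short matching argument in an auxiliary bipartite graph. For the basic bound, Remark \ref{CautionRmk}(i) lets me pick, for each $a \in x$, some $\phi(a) \in y$ with $d(a,\phi(a)) = \dist(a,y) \leq d_H(x,y)$, and symmetrically, for each $b \in y$, some $\psi(b) \in x$ with $d(\psi(b),b) = \dist(b,x) \leq d_H(x,y)$. Setting $R := \{(a,\phi(a)) : a \in x\} \cup \{(\psi(b),b) : b \in y\}$, the distance bounds give proximality, the pairs $(a,\phi(a))$ witness left-completeness, the pairs $(\psi(b),b)$ witness right-completeness, and $|R| \leq |x|+|y|$.

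For the moreover improvement, the idea is to exploit a potential overlap between the two parts of the union by choosing their ``first two'' edges coordinatedly. Consider the auxiliary bipartite graph $G$ on $x \sqcup y$ with edge set $\{(a,b) \in x \times y : d(a,b) \leq d_H(x,y)\}$; by Remark \ref{CautionRmk}(i), every vertex of $G$ has degree at least one. The key claim is that when $|x|,|y| \geq 2$, there exist distinct $a_1,a_2 \in x$ and distinct $b_1,b_2 \in y$ such that $(a_1,b_1)$ and $(a_2,b_2)$ are both edges of $G$, i.e.\ a matching of size $2$ in $G$. Granted this claim, I would define
\[
R := \{(a_1,b_1),(a_2,b_2)\} \cup \{(a,\phi(a)) : a \in x \setminus \{a_1,a_2\}\} \cup \{(\psi(b),b) : b \in y \setminus \{b_1,b_2\}\},
\]
which is still proximal complete (each pair has distance $\leq d_H(x,y)$, and every element of $x$ resp.\ $y$ appears as a first resp.\ second coordinate), and $|R| \leq 2 + (|x|-2) + (|y|-2) = |x|+|y|-2$.

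The matching claim, which is the only substantive step, I would prove by direct case analysis on $G$. Fix any edge $(a_1,b_1)$ of $G$, and any $a_2 \in x \setminus \{a_1\}$ (possible since $|x| \geq 2$); let $(a_2,b^*)$ be any edge incident to $a_2$, which exists because $\deg(a_2) \geq 1$. If $b^* \neq b_1$, then $\{(a_1,b_1),(a_2,b^*)\}$ is already the desired matching. Otherwise $b^* = b_1$, so the edge $(a_2,b_1)$ is in $G$; I then pick $b_2 \in y \setminus \{b_1\}$ (using $|y| \geq 2$) together with an edge $(a^*,b_2)$ incident to $b_2$. The sub-cases $a^* \neq a_2$ and $a^* = a_2$ now yield the size-$2$ matchings $\{(a_2,b_1),(a^*,b_2)\}$ and $\{(a_1,b_1),(a_2,b_2)\}$ respectively. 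I do not see any real obstacle beyond this bookkeeping: the ``no isolated vertex'' property of $G$ coming from Remark \ref{CautionRmk}(i) is really what drives everything, and the rest is mechanical set-theoretic union and cardinality counting.
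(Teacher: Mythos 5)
Your proof is correct. The first half --- taking the union of a nearest-point assignment $\phi:x\to y$ with a nearest-point assignment $\psi:y\to x$ to obtain a proximal complete relation of size at most $|x|+|y|$ --- is exactly the paper's construction (the paper writes $\alpha$ and $\beta$ for your $\phi$ and $\psi$; note that for finite $x,y$ the infima in Remark \ref{CautionRmk}(i) are attained in $y$ and $x$ themselves, so your $\phi(a)\in y$ is legitimate). For the ``moreover'' part the two arguments genuinely diverge. The paper keeps the full union $R$ and then deletes two pairs it identifies as redundant, namely $(x_{\beta(\alpha(i_0))},y_{\alpha(i_0)})$ and $(x_{\beta(j_0)},y_{\alpha(\beta(j_0))})$, asserting that completeness survives and that the cardinality drops by exactly $2$; this implicitly requires that the two deleted pairs are distinct, that each is genuinely ``extra'' (e.g., that $\beta(\alpha(i_0))\neq i_0$), and that the deletion orphans no coordinate --- degenerate cases the paper does not spell out. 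You instead reserve a size-$2$ matching in the proximity graph up front, covering four of the $|x|+|y|$ vertices with only two edges, and fill in the remaining vertices with $\phi$ and $\psi$; the count $2+(|x|-2)+(|y|-2)=|x|+|y|-2$ is then immediate and there are no coincidence cases to handle. Your case analysis establishing the size-$2$ matching (using only that every vertex of the proximity graph has degree at least one, together with $|x|,|y|\ge 2$) is complete: in each sub-case the two exhibited edges are vertex-disjoint. So your route reaches the same bound more robustly; the only thing the paper's version buys is that it exhibits the improved relation as a subset of the original one, which the statement does not require.
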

\begin{proof}
Let $x\in X(n)\backslash X(n-1)$, $y\in X(m)\backslash X(m-1)$. By the definition of $d_H(x,y)$, for all $i\in\{1,...,n\}$ and $j\in\{1,...,m\}$, there exist $\alpha(i)\in\{1,...,m\}$ and $\beta(j)\in\{1,...,n\}$ such that $d(x_i,y_{\alpha(i)})\leq d_H(x,y)$ and $d(x_{\beta(j)},y_j)\leq d_H(x,y)$. Let
$R=\{(x_i,y_{\alpha(i)}):i=1,...,n\}\cup\{(x_{\beta(j)},y_j):j=1,...,m\}$. Then $|R|\leq n+m$.

Moreover, if $n\geq 2$ and $m\geq 2$, then with $d(x_{i_0},y_{\alpha(i_0)})=d_H(x,y)$ and $d(x_{\beta(j_0)},y_{j_0})=d_H(x,y)$, we see that $R$ contains both $(x_{i_0},y_{\alpha(i_0)})$ and $(x_{\beta(\alpha(i_0))},y_{\alpha(i_0)})$, and by a symmetric argument, $R$ also contains both $(x_{\beta(j_0)},y_{j_0})$ and $(x_{\beta(j_0)},y_{\alpha(\beta(j_0))})$). Removing the two redundant elements, we are left with another complete relation $\widetilde{R}=R\backslash\{(x_{\beta(\alpha(i_0))},y_{\alpha(i_0)}),(x_{\beta(j_0)},y_{\alpha(\beta(j_0))})\}$ satisfying $|\widetilde{R}|=|R|-2\leq m+n-2$.
\end{proof}

\begin{crl}\label{FSGeodBound}
Let $X$ be a metric space. (i) Let $n\geq 2$. If $X$ is geodesic and $m\geq 2n-2$, then any two points of $FS_n(X)$ can be connected by a geodesic in $FS_m(X)$. (ii) $X$ is geodesic if and only if $FS_2(X)$ is geodesic. (iii) If $X$ is geodesic, then so is $FS(X)$ (also by Corollary \ref{FS_BCl_QConv}(ii)).
\end{crl}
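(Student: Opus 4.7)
The plan is to assemble parts (i)--(iii) from two ingredients already in place: Lemma \ref{GeodExistLmm2} produces a proximal complete relation $R\subset x\times y$ of controlled size, and Theorem \ref{QGeodSuffFS} upgrades any such $R$ with $|R|\leq m$ into a genuine quasigeodesic in $FS_m(X)$. Since the hypothesis $X$ geodesic means $\lambda=1$, and proximal means $\alpha=1$, the output of Theorem \ref{QGeodSuffFS} is a bona fide geodesic (not just a quasigeodesic), which is exactly what each part requires.

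For \textbf{(i)}, fix $x,y\in FS_n(X)$ with $n\geq 2$ and $m\geq 2n-2$. Apply Lemma \ref{GeodExistLmm2} to produce a proximal complete relation $R\subset x\times y$. If $|x|\geq 2$ and $|y|\geq 2$, the lemma gives $|R|\leq |x|+|y|-2\leq 2n-2\leq m$. Otherwise one of the sets, say $y$, is a singleton, and completeness of $R$ forces $|R|=|x|\leq n\leq 2n-2\leq m$ (using $n\geq 2$). Either way $|R|\leq m$, so Theorem \ref{QGeodSuffFS} with $\alpha=\lambda=1$ delivers a geodesic from $x$ to $y$ inside $FS_m(X)$.

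For \textbf{(ii)}, the forward direction is just part (i) applied with $n=2$, noting that $2n-2=2$ so $FS_m(X)=FS_2(X)$ is an admissible target. For the converse, assume $FS_2(X)$ is geodesic and fix $x,y\in X$. Then $\{x\},\{y\}\in FS_2(X)$ and $d_H(\{x\},\{y\})=d(x,y)$, so a geodesic $\gamma:[0,1]\to FS_2(X)$ joining them is $L$-Lipschitz with $L=d(x,y)$. Applying Theorem \ref{QGeodComp} with $\mathcal{J}=FS_2(X)\subset K(X)$ produces an $L$-Lipschitz path in $X$ from the unique point $x\in\gamma(0)$ to some $b\in\gamma(1)=\{y\}$, necessarily $b=y$. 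By Corollary \ref{GeodLength}, this path is a geodesic from $x$ to $y$ in $X$.

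For \textbf{(iii)}, given $x,y\in FS(X)$ set $n:=\max(|x|,|y|)\geq 1$; if $n=1$ then $x,y$ are singletons and any geodesic in $X$ between their points yields a geodesic in $FS(X)$, while for $n\geq 2$ part (i) with $m:=2n-2$ provides a geodesic between $x$ and $y$ in $FS_{2n-2}(X)\subset FS(X)$. The only mild subtlety anywhere in the argument is the bookkeeping around the singleton edge case in (i), since Lemma \ref{GeodExistLmm2} advertises the tighter bound $|R|\leq|x|+|y|-2$ only when both sets have size at least $2$; the slack $m\geq 2n-2\geq n$ absorbs the singleton case, so no additional work is needed.
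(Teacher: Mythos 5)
Your proof is correct and follows essentially the same route as the paper: Lemma \ref{GeodExistLmm2} supplies a proximal complete relation with $|R|\leq 2n-2\leq m$, and Theorem \ref{QGeodSuffFS} with $\lambda=\alpha=1$ converts it into a geodesic in $FS_m(X)$, from which (ii) and (iii) follow. You actually supply two details the paper leaves implicit --- the singleton edge case in (i), where the $|x|+|y|-2$ bound of the lemma does not formally apply, and the converse direction of (ii), which does not follow from (i) alone and which you correctly handle via Theorem \ref{QGeodComp} --- so your write-up is, if anything, more complete than the paper's.
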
{\tiny {\scriptsize }}
\begin{proof}
(i): If $m\geq 2n-2$, then for any $x,y\in FS_n(X)$, there exists (by Lemma \ref{GeodExistLmm2}) a proximal complete relation $R\subset x\times y$ such that $|R|\leq |x|+|y|-2\leq 2n-2\leq m$, and so (by Theorem \ref{QGeodSuffFS}) $x$ and $y$ are connected by a geodesic in $FS_m(X)$. (ii) and (iii) immediately follow from (i).
\end{proof}

\begin{rmk}\label{CEx4Rmk}
Even when $X$ is geodesic, if $n\geq 2$, then $FS^n(X):=FS(X)\backslash FS_{n-1}(X)$ need not be geodesic. For a counterexample, let $X$ be the union of two coordinate axes in $\mathbb{R}^2$ with the taxicab metric, that is, $X = \{(x, y) \in  \mathbb{R}^2 : xy = 0\}$ and $d((x_1, y_1)$, $(x_2, y_2)) = |x_1 - x_2| + |y_1 - y_2|$. This is a geodesic space, since the distance is precisely the length of the natural path from $(x_1, y_1)$ to $(x_2, y_2)$. Let $A = \{(1, 0), (0, 1)\}$ and $B = \{(-1, 0), (0,-1)\}$, which are elements of $FS^2(X)$. If there is a geodesic $\gamma : [0, 1] \rightarrow FS^2(X)$ connecting $A$ to $B$, then (by Note \ref{QGeodNote}) $\gamma (1/2)\subset \overline{N}_1(A) \cap \overline{N}_1(B) = \{(0, 0)\}$, which is a contradiction (since one-point sets are not in $FS^2(X)$).
\end{rmk}

The proof of Theorem \ref{QConvFSn} (which is our main result of this section) uses Lemma \ref{FactznProp} and the meaning of a \emph{reduced complete relation} from Definition \ref{RelTypeDfn}.

\begin{thm}[\textcolor{blue}{Quasiconvexity of $FS_n(X)$}]\label{QConvFSn}
If $X$ is geodesic, then $FS_n(X)$ is $2$-quasiconvex.

Moreover, if $n\geq 3$, then by Corollary \ref{GeodFailCrl},  the quasiconvexity constant $2$ for $FS_n(X)$ is the least.
\end{thm}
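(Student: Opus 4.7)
The plan is to invoke Theorem \ref{QGeodExistFS}: for $x, y \in FS_n(X)$, I will build a proximal complete relation $R \subset x \times y$ and a family of $2 d_H(x,y)$-Lipschitz paths $\{\gamma_{(a,b)}\}_{(a,b) \in R}$ in $X$ whose joint image $\{\gamma_r(t) : r \in R\}$ has cardinality at most $n$ for every $t \in [0,1]$. This produces the desired $2$-quasigeodesic from $x$ to $y$, and the optimality of the constant $2$ for $n \geq 3$ follows immediately from Corollary \ref{GeodFailCrl}(i).

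The construction begins with the tautological proximal complete relation $R_0 = \{(x_i, y_j) \in x \times y : d(x_i, y_j) \leq d_H(x, y)\}$, followed by iterative deletion of any edge both of whose endpoints still have degree $\geq 2$; completeness and proximality are preserved, and the resulting $R$ is a \emph{reduced complete} relation in the sense of Definition \ref{RelTypeDfn}, meaning every edge has a leaf (degree-$1$ endpoint). Consequently the bipartite graph $(x \cup y, R)$ decomposes as a disjoint union of stars of three types: \emph{merge stars} (center $y_j$ with $\geq 2$ leaves in $x$), \emph{split stars} (center $x_i$ with $\geq 2$ leaves in $y$), and isolated edges. Writing $k_X$, $s_X$, $e$ for the numbers of merge centers, split centers, and isolated edges, and $m_X$, $s_Y$ for the total merge and split leaf counts, one has $|x| = m_X + s_X + e \leq n$, $|y| = k_X + s_Y + e \leq n$, and crucially $m_X \geq 2 k_X$, $s_Y \geq 2 s_X$, since each star has at least two leaves.

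Using the geodesy of $X$, for each $(x_i, y_j) \in R$ I fix a geodesic $\eta_{ij} : [0,1] \to X$ of length $d(x_i, y_j) \leq d_H(x,y)$ and build the component paths in two phases: for merge edges and isolated edges, set $\gamma_{ij}(t) := \eta_{ij}(\min(2t, 1))$ (moves on $[0, 1/2]$, then rests at $y_j$), and for split edges, set $\gamma_{ij}(t) := \eta_{ij}(\max(2t - 1, 0))$ (rests at $x_i$, then moves on $[1/2, 1]$). Each $\gamma_{ij}$ is $2 d_H(x,y)$-Lipschitz, and for $\gamma(t) := \{\gamma_{ij}(t) : (i,j) \in R\}$ the iterated application of Lemma \ref{FactznProp} yields $d_H(\gamma(t), \gamma(t')) \leq \max_{ij} d(\gamma_{ij}(t), \gamma_{ij}(t')) \leq 2 d_H(x,y) |t - t'|$; at the endpoints $\gamma(0) = x$ and $\gamma(1) = y$ by completeness of $R$.

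The main obstacle, which is precisely what motivated the reduced-complete/star structure, is verifying $|\gamma(t)| \leq n$ throughout. On $[0, 1/2]$ the positions come from the $m_X + e$ moving particles (one per merge leaf or isolated edge) together with the $s_X$ stationary split centers, totaling at most $m_X + s_X + e = |x| \leq n$ distinct points; on $[1/2, 1]$, one symmetrically gets at most $k_X + s_Y + e = |y| \leq n$. The delicate case is the transition $t = 1/2$, where the merges have coalesced to centers, the split particles have not yet diverged, and isolated edges sit at their targets, giving $k_X + e + s_X$ positions, which is bounded by $k_X + e + s_Y = |y| \leq n$ via the key inequality $s_X \leq s_Y$ (a consequence of $s_Y \geq 2 s_X$). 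Thus $\gamma$ lands in $FS_n(X)$, and Theorem \ref{QGeodExistFS} delivers the $2$-quasigeodesic, completing the proof.
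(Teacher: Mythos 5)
Your proposal is correct and follows essentially the same route as the paper: both reduce a proximal complete relation, identify the intermediate configuration of star centers and isolated-edge targets (the paper's $z=x''\cup y''$ is exactly your $t=1/2$ configuration), and traverse the component geodesics in two phases so that the cardinality never exceeds $\max(|x|,|y|)\leq n$. The only difference is presentational: the paper concatenates two quasigeodesics $x\to z\to y$ produced by Theorem \ref{QGeodSuffFS} with proximality constants $d_H(x,y)/d_H(x,z)$ and $d_H(x,y)/d_H(z,y)$, whereas you write the concatenated path explicitly via the $\min(2t,1)$ and $\max(2t-1,0)$ reparametrizations and verify the Lipschitz and cardinality bounds directly from the star decomposition.
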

\begin{proof}
Fix $x,y\in FS_n(X)$. Let $R\subset x\times y$ be a reduced proximal complete relation (which is possible because a proximal complete relation $R\subset x\times y$ exists by the definition of $d_H(x,y)$ and can be reduced by removing a finite number of inessential elements, i.e., those $(a,b)\in R$ that satisfy $|(\{a\}\times B)\cap R|\geq 2$ and $|(A\times\{b\})\cap R|\geq 2$).

Consider the domain $x^1=\{a\in x: |(\{a\}\times B)\cap R|=1\}$ and range $y^1=\{b\in y: |(A\times\{b\})\cap R|=1\}$ of $R$. Define a map $f:x^1\rightarrow y$ by $(\{a\}\times B)\cap R=\{(a,f(a))\}$ and a map $g:y^1\rightarrow x$ by $(A\times\{b\})\cap R=\{(g(b),b)\}$. Then
\[
R=\{(a,f(a)):a\in x^1\}\cup\{(g(b),b):b\in y^1\},
\]
where $\{(a,f(a)):a\in x^1\}\cap\{(g(b),b):b\in y^1\}=\{(a,f(a)):a\in x^0\}=\{(g(b),b):b\in y^0\}$ for subsets $x^0\subset x^1$ and $y^0\subset y^1$ such that $f^0=f|_{x^0}:x^0\rightarrow y^0$ and $g^0=g|_{y^0}:y^0\rightarrow x^0$ are mutually inverse bijections. So, with $x':=x^1$, $y':=y^1\backslash y^0$, $x'':=g(y')$, and $y'':=f(x')$, we get (``parallel'' or ``disjoint'') surjective maps
\[
f'=f|_{x'}:x'\rightarrow y'',~~~~g'=g|_{y'}:y'\rightarrow x''
\]
and disjoint unions $~x=x'\sqcup x''=x'\sqcup g'(x')$, $~y=y'\sqcup y''=y'\sqcup f'(x')$.

Let $z:=x''\cup y''$, where $z\in FS_n(X)$ by construction since $|z|\leq|x''|+|y''|\leq|x''|+|x'|=|x|\leq n$. Consider the $d_H (x,y)/d_H (x,z)$-proximal complete relation $R_1\subset x\times z$ and the $d_H(x,y)/d_H(z,y)$-proximal complete relation $R_2\subset y\times z$ given respectively by
\[
R_1=\{(a,f'(a)):a\in x'\}\cup\{(c,c):c\in x''\}~~\textrm{and}~~R_2=\{(g'(b),b):b\in y'\}\cup\{(c,c):c\in y''\},
\]
where the proximality claims for $R_1$ and $R_2$ are due (by Lemma \ref{FactznProp} and proximality of $R$) to $d_H(x,z)=d_H(x'\cup x'',x''\cup y'')\leq d_H(x',y'')\leq d_H(x,y)$ (along with $d(a,f'(a))\leq d_H(x,y)$) and, similarly, $d_H(z,y)\leq d_H(x,y)$ (along with $d(g'(b),b)\leq d_H(x,y)$). Then, by Theorem \ref{QGeodSuffFS}, we get a $\lambda_1:=d_H(x,y)/d_H(x,z)$-quasigeodesic $\gamma_1:[0,1]\rightarrow FS_n(X)$ from $x$ to $z$ and a $\lambda_2:=d_H(x,y)/d_H(z,y)$-quasigeodesic $\gamma_2:[0,1]\rightarrow FS_n(X)$ from $z$ to $y$. The path $\gamma=\gamma_1\cdot\gamma_2:[0,1]\rightarrow FS_n(X)$ from $x$ to $y$ given by $\gamma|_{[0,1/2]}(t):=\gamma_1(2t)$ and $\gamma|_{[1/2,1]}(t):=\gamma_2(2t-1)$ satisfies
\[
\ell(\gamma)=\ell(\gamma_1)+\ell(\gamma_2)\leq \lambda_1d_H(x,z)+\lambda_2d_H(z,y)=2d_H(x,y). \qedhere
\]
\end{proof}

\begin{crl}[\textcolor{blue}{\cite[Corollary 2.1.15]{akofor2020}}]\label{FinalCor}
If $X$ is $\lambda$-quasiconvex, then $FS_n(X)$ is $\alpha_n(\lambda)$-quasiconvex with the smallest constants satisfying $\alpha_1(\lambda)=\alpha_2(\lambda)=\lambda$ and $\max(2,\lambda)\leq\alpha_n(\lambda)\leq 2\lambda$, for $n\geq 3$.
\end{crl}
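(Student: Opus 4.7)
The plan is to separate the easy range $n\in\{1,2\}$ from $n\geq 3$ and, within each case, handle the upper and lower bounds independently. The case $n=1$ is immediate since the map $x\mapsto\{x\}$ is an isometry $X\to FS_1(X)$. For all $n\geq 2$ below I take $X$ to be $\lambda$-quasiconvex with $\lambda\geq 1$.

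For the upper bound when $n=2$, I would start with any $x,y\in FS_2(X)$ and produce a proximal complete relation $R\subset x\times y$ with $|R|\leq 2$: if $|x|=|y|=2$ this is supplied by the \emph{moreover} clause of Lemma \ref{GeodExistLmm2}, while otherwise one of $x,y$ is a singleton and the trivial pairing $\{x\}\times y$ or $x\times\{y\}$ already has at most two elements. Applying Theorem \ref{QGeodSuffFS} with $\alpha=1$ and $m=n=2$ then yields the required $\lambda$-quasigeodesic in $FS_2(X)$, so $\alpha_2(\lambda)\leq\lambda$. For the upper bound when $n\geq 3$, I would mimic the proof of Theorem \ref{QConvFSn} verbatim, now using $\lambda$-quasiconvexity rather than geodesy of $X$: form a reduced proximal complete relation $R\subset x\times y$, build the intermediate set $z=x''\cup y''\in FS_n(X)$, and extract half-relations $R_1\subset x\times z$, $R_2\subset z\times y$ whose proximality factors are controlled via Lemma \ref{FactznProp} as $\alpha_1=d_H(x,y)/d_H(x,z)$ and $\alpha_2=d_H(x,y)/d_H(z,y)$. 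Feeding these into Theorem \ref{QGeodSuffFS} produces $\lambda\alpha_i$-quasigeodesics, each of length at most $\lambda d_H(x,y)$; concatenating gives a path of length at most $2\lambda d_H(x,y)$ and therefore $\alpha_n(\lambda)\leq 2\lambda$.

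For the lower bounds, the inequality $\alpha_n(\lambda)\geq\lambda$ (for every $n\geq 1$) will come from a singleton-reduction argument. I would pick a $\lambda$-quasiconvex space $X$ whose quasiconvexity constant is exactly $\lambda$ (such $X$ exists for every $\lambda\geq 1$; e.g., attach to two prescribed points $p,q$ with $d(p,q)=1$ only paths of length $\geq\lambda$) and observe that any $L$-Lipschitz path in $FS_n(X)$ between the singletons $\{p\}$ and $\{q\}$ produces, via Theorem \ref{QGeodComp} (applicable because $FS_n(X)\subset K(X)$), an $L$-Lipschitz path in $X$ from $p$ to $q$, so that $L\geq\lambda d(p,q)$. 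The additional bound $\alpha_n(\lambda)\geq 2$ for $n\geq 3$ will follow by taking $X=\mathbb{R}$ (which is $1$-quasiconvex, hence $\lambda$-quasiconvex for every $\lambda\geq 1$) and invoking Corollary \ref{GeodFailCrl}(i) directly. Together these give $\alpha_n(\lambda)\geq\max(2,\lambda)$.

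The main obstacle will be the cardinality bookkeeping in the $n\geq 3$ upper bound: I must verify that the intermediate set $z=x''\cup y''$ is actually in $FS_n(X)$ and that each of $R_1,R_2$ has cardinality at most $n$, so that Theorem \ref{QGeodSuffFS} returns quasigeodesics in $FS_n(X)$ rather than in some larger $FS_m(X)$. The reduction step in Theorem \ref{QConvFSn} forces $x''\subset x$ and $y''\subset y$, whence $|z|\leq\max(|x|,|y|)\leq n$; once this is in hand, the remaining proximality and Lipschitz-constant estimates are routine and follow the templates of Theorem \ref{QConvFSn} and Theorem \ref{QGeodSuffFS} mechanically.
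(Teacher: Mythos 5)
Your proposal is correct, and for the upper bounds ($\alpha_2(\lambda)\leq\lambda$ via Lemma \ref{GeodExistLmm2} plus Theorem \ref{QGeodSuffFS}, and $\alpha_n(\lambda)\leq 2\lambda$ by re-running Theorem \ref{QConvFSn} with $\lambda$-quasigeodesics in place of geodesics) it follows the paper's route essentially verbatim. Where you genuinely diverge is on the lower bounds. The paper disposes of $\alpha_n(\lambda)\geq\lambda$ with the one-line remark that $X\subset FS_n(X)$, which leaves implicit both the need for an extremal $\lambda$-quasiconvex $X$ and the fact that a Lipschitz path in $FS_n(X)$ between singletons need not stay inside $FS_1(X)$; your singleton-reduction via Theorem \ref{QGeodComp} (selecting an $L$-Lipschitz path in $X$ from $p$ to $q$ out of a path from $\{p\}$ to $\{q\}$) supplies exactly the missing selection step, and your concrete choice of $X=\mathbb{R}$ for $\alpha_n(\lambda)\geq 2$ is cleaner than the paper's appeal to an ``analog of Corollary \ref{GeodFailCrl}(i)'' in a general $\lambda$-quasiconvex space, where the midpoint construction of the spaced pair would otherwise need rechecking. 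Both readings presuppose the uniform (worst-case over $X$) interpretation of $\alpha_n(\lambda)$, which is the only coherent one. One small imprecision: your justification $|z|\leq\max(|x|,|y|)$ ``because $x''\subset x$ and $y''\subset y$'' is not quite how the bound is obtained; the correct bookkeeping is $|z|\leq|x''|+|y''|\leq|x''|+|x'|=|x|\leq n$, using $|y''|=|f(x')|\leq|x'|$ and the disjoint union $x=x'\sqcup x''$. The conclusion you need ($z\in FS_n(X)$ and $|R_1|,|R_2|\leq n$) is correct, so this does not affect the argument.
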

\begin{proof}
Suppose $X$ is $\lambda$-quasiconvex. Let $\alpha_n(\lambda)$ denote the smallest quasiconvexity constant of $FS_n(X)$. Since $X$ is an $F_1$-space (hence $FS_1(X)$ is isometric to $X$), $FS_1(X)$ is $\lambda$-quasiconvex, and so $\alpha_1(\lambda)=\lambda$. By replacing ``geodesic (path)'' with ``$\lambda$-quasiconvex (path)'' in both Theorem \ref{QGeodSuffFS} and Corollary \ref{FSGeodBound}, we see that $FS_2(X)$ is also $\lambda$-quasiconvex, and so $\alpha_2(\lambda)=\lambda$.

For all $n\geq 1$, since $X\subset FS_n(X)$ (with meaning in Notation \ref{TbPcTerm}), we have $\alpha_n(\lambda)\geq \lambda$. Finally, by replacing ``geodesic'' with ``$\lambda$-quasiconvex'' (hence rescaling the Lipschitz constant of involved paths by $\lambda$) in Theorem \ref{QConvFSn}, we obtain a straightforward analog of Theorem 4.8 which says that $FS_n(X)$ is $2\lambda$-quasiconvex (hence $\alpha_n(\lambda)\leq 2\lambda$) and, for $n\geq 3$, the analog of Corollary \ref{GeodFailCrl}(i) still says the quasiconvexity constant of $FS_n(X)$ is at least $2$ (hence $\alpha_n(\lambda)\geq 2$).
\end{proof}

\section{\textnormal{\bf Some relevant questions}}\label{QGeodQuests}

\noindent Fix a metric space $X$. According to Theorem \ref{RepThmPCl}, to have a Lipschitz path in a stable covering subspace $\mathcal{J}\subset PCl(X)$ it is necessary and sufficient to have a set of Lipschitz paths in $\widetilde{X}$ with specific properties. It is clear that the sufficiency (with proof based on Lemma \ref{QGeodSuff}) holds for $\mathcal{J}\subset BCl(X)$, and not just for $\mathcal{J}\subset PCl(X)$. The necessity (which depends on Theorem \ref{QGeodComp}), however, is more involved.

\vspace{0.2cm}
\begin{question}\label{CritQuest}
In Theorem \ref{RepThmPCl}, can $PCl(X)$ be replaced with $BCl(X)$? Alternatively:
\begin{enumerate}
\item For the existence of Lipschitz paths in a stable covering subspace $\mathcal{J}\subset BCl(X)$, is the sufficient condition in Theorem \ref{RepThmPCl} (which is valid by Lemma \ref{QGeodSuff}) also necessary?
\item Does there exist a nontrivial Lipschitz path in a stable covering subspace $\mathcal{J}\subset BCl(X)$ that violates the sufficient condition in Theorem \ref{RepThmPCl}?
\end{enumerate}
If the answer to (1) above is negative (i.e., the answer to (2) above is positive), what is the largest possible subspace of $BCl(X)$ for which the necessity part of Theorem \ref{RepThmPCl} is valid?
\end{question}

The answer to Question \ref{CritQuest} might require or involve the answer to the following related question.
\begin{question}\label{RepsQuest}
In Proposition \ref{QGeodEquiv2}, can the canonical map $\gamma(t)=A_{t{L}}\cap B_{(1-t){L}}$ in $BCl(X)$ be replaced with an arbitrary $L$-Lipschitz path in $BCl(X)$?
\end{question}

For application purposes, one can also ask questions concerning efficiency in practically constructing or realizing Lipschitz paths in subset spaces. Consider the setup in Theorem \ref{RepThmPCl}.

\begin{question}\label{CardQuest}
Among the possible densely complete relations $R=\{(\gamma_r(0),\gamma_r(1)):r\in\Gamma\}\subset \widetilde{A}\times \widetilde{B}$, which ones, and how many of them, have the smallest cardinality (in the sense that they best approximate reduced densely complete relations)?
\end{question}

\begin{question}\label{LengthQuest}
Among the possible (reduced) densely complete relations $R=\{(\gamma_r(0),\gamma_r(1)):r\in\Gamma\}\subset \widetilde{A}\times \widetilde{B}$, which ones, and how many of them, admit the shortest, simplest, or least (computationally) complex paths $\{\gamma_r:r\in \Gamma\}$ in $\widetilde{X}$?
\end{question}

The Castaing representation of Lipschitz compact-valued mappings (\cite[Th\'eor\`eme 5.4]{castaing1967} and \cite[Theorem 7.1]{Chist2004}), in terms of only a countable number of Lipschitz selections, might be relevant to Questions \ref{CardQuest} and \ref{LengthQuest}.

\begin{question}
In Corollary \ref{FinalCor} (where $X$ is $\lambda$-quasiconvex), we know that $\alpha_1(\lambda)=\alpha_2(\lambda)=\lambda$. What precisely is $\alpha_n(\lambda)$ for $n\geq 3$? For example, is $\alpha_n(\lambda)=2\lambda$ for $n\geq 3$?

Observe that if we can find $(\lambda,D(\lambda,z_1,z_2))$-spaced pairs in $FS_n(X)$, then by an argument similar to that in the proof of Corollary \ref{GeodFailCrl}(i), we might get $\alpha_n(\lambda)\geq 2\lambda$ and conclude that $\alpha_n(\lambda)=2\lambda$. Therefore, as a followup question, do there exist any $(\lambda,D(\lambda,z_1,z_2))$-spaced pairs in $FS_n(X)$?
\end{question}

\section*{\textnormal{\bf Acknowledgments}}

This manuscript has been greatly improved using extensive feedback (including the counterexamples in Remarks \ref{IndexSetrmk} and \ref{WeakConnRmk}) from referees and editors of \emph{Annales Fennici Mathematici}. The same is true about careful feedback (including the counterexamples in Remarks \ref{CEx3Rmk} and \ref{CEx4Rmk}, the fact in Remark \ref{CEx3Rmk} that $PCl(X)$ is $d_H$-closed, and attention to the references \cite{BelovChist2000}, \cite{castaing1967}, \cite{Chist2004}, \cite{hermes1971}, \cite{Slezak1987}) from referees and editors of \emph{The Journal of Analysis}.

\section*{\textnormal{\bf Declarations}}
The author has no competing interests to declare that are relevant to the content of this article.

\begin{bibdiv}
\begin{biblist}

\bib{akofor2020}{article}{  
   author={Akofor, E.,},
   title={Metric geometry of finite subset spaces},
   journal={Dissertations-ALL.1145. https://surface.syr.edu/etd/1145.},
   date={2020},
}

\bib{akofor2019}{article}{  
   author={Akofor, E.,},
   title={On Lipschitz retraction of finite subsets of normed spaces},
   journal={Israel J. Math.},
   date={2019},
}

\bib{BelovChist2000}{article}{  
   author={A. Belov, S.,},
   author={V. Chistyakov, V.,},
   title={A Selection Principle for Mappings of Bounded Variation},
   journal={J. Math.
Anal. Appl.,},
   volume={249},
   date={2000},
   number={},
   pages={351--366},
}

\bib{BorovEtal2010}{article}{  
   author={Borovikova, M.,},
   author={Ibragimov, Z.,},
   author={Yousefi, H.,},
   title={Symmetric products of the real line},
   journal={J. Anal.},
   volume={18},
   date={2010},
   pages={53-67},
   issn={0971-3611},
}

\bib{bryant1970}{article}{  
   author={W. Bryant, V.,},
   title={The convexity of the subset space of a metric space},
   journal={Compositio Math},
   volume={22},
   date={1970},
   pages={383-385},
}

\bib{BBI}{book}{
   author={Burago, Dmitri},
   author={Burago, Yuri},
   author={Ivanov, Sergei},
   title={A course in metric geometry},
   series={Graduate Studies in Mathematics},
   volume={33},
   publisher={American Mathematical Society, Providence, RI},
   date={2001},
   pages={xiv+415},
   isbn={0-8218-2129-6},
}

\bib{castaing1967}{article}{  
   author={Castaing, Ch.,},
   title={Sur les multi-applications mesurables},
   journal={Rev. Française Informat. Recherche Opérationnelle,},
   volume={1},
   date={1967},
   number={1},
   pages={91--126},
}

\bib{Chist2004}{article}{  
   author={V. Chistyakov, V.,},
   title={Selections of bounded variation},
   journal={J. Appl. Anal.,},
   volume={10},
   date={2004},
   number={1},
   pages={1-82},
}

\bib{fox2022}{article}{  
   author={S. Fox, Logan,},
   title={Geodesic bicombings on some hyperspaces},
   journal={Journal of Geometry},
   volume={113},
   date={2022},
   number={28},
}

\bib{hakobyan-herron2008}{article}{  
   author={Hakobyan, Hrant,},
   author={A. Herron, David,},
   title={Euclidean quasiconvexity},
   journal={Ann. Acad. Sci. Fenn. Math.},
   volume={33},
   date={2008},
   number={1},
   pages={205--230},
}

\bib{hermes1971}{article}{  
   author={Hermes, Henry,},
   title={On continuous and measurable selections and the existence of solutions of generalized differential equations},
   journal={Proc. Amer. Math. Soc.,},
   volume={29},
   date={1971},
   number={},
   pages={535--542},
}

\bib{kovalevTyson2007}{article}{  
   author={V. Kovalev, Leonid,},
   author={T. Tyson, Jeremy,},
   title={Hyperbolic and quasisymmetric structure of hyperspace},
   journal={Amer. Math. Soc.},
   date={2007},
   pages={151-166},
   note={(Contemp. Math)},
}

\bib{MemoliWan2023}{article}{  
   author={M\'emoli, F.,},
   author={Wan., Z.,},
   title={Characterization of Gromov-type geodesics},
   journal={Differential Geometry and its Applications},
   volume={88},
   date={2023},
   issn={102006},
}

\bib{papado2014}{book}{
   author={Papadopoulos, A.},
   title={Metric spaces, convexity and nonpositive curvature},
   series={IRMA Lectures in Mathematics and Theoretical Physics},
   volume={6},
   publisher={European Mathematical Society (EMS), Z\"{u}rich},
   date={2005},
   pages={xii+287},
   isbn={3-03719-010-8},
}

\bib{serra1998}{article}{  
   author={Serra., J.,},
   title={Hausdorff distances and interpolations},
   journal={Computational Imaging and Vision},
   volume={12},
   date={1998},
   pages={107-114},
}

\bib{Slezak1987}{article}{  
   author={\'Sl\c{e}zak, W{\l}odzimierz A.,},
   title={Concerning continuous selectors for multifunctions with nonconvex values,},
   journal={Problemy Matematyczne,},
   volume={9},
   date={1987},
   number={},
   pages={85--104},
}

\bib{tyson-wu2005}{article}{  
   author={T. Tyson, Jeremy,},
   author={Wu, Jang-Mei,},
   title={Characterizations of snowflake metric spaces},
   journal={Ann. Acad. Sci. Fenn. Math.},
   volume={30},
   date={2005},
   number={2},
   pages={313--336},
}

\end{biblist}
\end{bibdiv}

\vspace{0.2cm}
\hrule
\endgroup
\end{document}